%
%
%
%

\documentclass[reqno,11pt]{amsart}

\oddsidemargin 0cm \evensidemargin 0cm \topmargin -0.5cm
\textwidth 15cm \textheight 21cm





\usepackage{amsmath}
\usepackage{amsfonts}
\usepackage{amssymb}
\usepackage{color}
\usepackage{mathrsfs}
%
%

\usepackage[colorlinks=true]{hyperref}

\usepackage{cleveref}

\newtheorem{thm}{Theorem}[section]
\newtheorem{lem}[thm]{Lemma}
\newtheorem{cor}[thm]{Corollary}
\newtheorem{prop}[thm]{Proposition}

\newtheorem{rem}[thm]{Remark}


\numberwithin{equation}{section}


\newcommand{\aequation}{\renewcommand{\theequation}{\mbox{A.\arabic{equation}}}}

\newcommand{\bequation}{\renewcommand{\theequation}{\mbox{B.\arabic{equation}}}}

\newcommand{\nequation}{\setcounter{equation}{0}}


\newcommand{\Wqb}{W_{q,\mathcal{B}}}
\newcommand{\R}{\mathbb{R}}
\newcommand{\C}{\mathbb{C}}

\newcommand{\mS}{\mathbb{S}}

\newcommand{\N}{\mathbb{N}}
\newcommand{\G}{\mathbb{G}}
\newcommand{\A}{\mathbb{A}}
\newcommand{\B}{\mathbb{B}}
\newcommand{\T}{\mathbb{T}}

\newcommand{\E}{\mathbb{E}}

\newcommand{\ml}{\mathcal{L}}

\newcommand{\Om}{\Omega}

\newcommand{\ve}{\varepsilon}
\newcommand{\rd}{\mathrm{d}}
\newcommand{\dom}{\mathrm{dom}}

\newcommand{\bear}{\begin{eqnarray}} 
\newcommand{\eear}{\end{eqnarray}} 
\newcommand{\bean}{\begin{eqnarray*}} 
\newcommand{\eean}{\end{eqnarray*}} 
\newcommand{\bs}{\begin{split}}
\newcommand{\es}{\end{split}}

\newcommand{\dhr}{\mathrel{\lhook\joinrel\relbar\kern-.8ex\joinrel\lhook\joinrel\rightarrow}}

\begin{document}


\title[Stability in Age-Structured Diffusive Populations]{Stability and Instability of Equilibria in  Age-Structured Diffusive Populations}

\author{Christoph Walker}
\email{walker@ifam.uni-hannover.de}
\address{Leibniz Universit\"at Hannover\\ Institut f\" ur Angewandte Mathematik \\ Welfengarten 1 \\ D--30167 Hannover\\ Germany}

\date{\today}

\begin{abstract}
The principle of linearized stability and instability is established for a classical model describing the spatial movement of an age-structured population with nonlinear vital rates. It is shown that the real parts of the eigenvalues of the corresponding linearization at an equilibrium determine the latter's stability or instability. The key ingredient of the proof is the eventual compactness of the semigroup associated with the linearized problem, which is derived by a perturbation argument. The results are illustrated with examples.
\end{abstract}

\keywords{Age structure, diffusion, semigroups, stability of equilibria, linearization.\\ }
\subjclass[2010]{47D06, 35B35, 35M10, 92D25}

\maketitle


\section{{\bf Introduction}}

The dynamics of a population structured by space and age is described by a density function $u=u(t,a,x)\ge 0$, where $t\ge 0$ refers to time, $a\in J:=[0,a_m]$ is the age variable with maximal age $a_m\in (0,\infty)$ (individuals may attain age greater than $a_m$ but are no longer tracked in the model), and $x\in \Omega$ is the spatial position within a domain $\Omega \subset \R^n$. 
Then
$$
\bar u(t,x):=\int_0^{a_m}\varrho(a,x) \, u(t,a,x)\, \rd a
$$
is the weighted local  overall population with weight $\varrho$ (i.e. the total number of individuals at time instant $t$ and spatial position $x$ when $\varrho\equiv 1$).
Assuming that the death rate $m=m(\bar u(t,x),a)\ge 0$ and the birth rate $b=b(\bar u(t,x),a)\ge 0$ depend on this quantity and on age,
the governing equations for the density $u$ are
\begin{subequations}\label{Eu1a}
\begin{align}
\partial_t u+\partial_a u&=\mathrm{div}_x\big(d(a,x)\nabla_xu\big)-m(\bar u(t,x),a)u\ , && t>0\, , &  a\in (0,a_m)\, ,& & x\in\Om\, ,\label{u1a}\\
u(t,0,x)&=\int_0^{a_m} b(\bar u(t,x),a)u(t,a,x)\,\rd a\, ,& & t>0\, , & & & x\in\Om\, ,\label{u2a}\\
\mathcal{B} u(t,a,x)&=0\ ,& & t>0\, , &  a\in (0,a_m)\, ,& & x\in\partial\Om\, ,\label{u3a}\\
u(0,a,x)&=u_0(a,x)\ ,& & &  a\in (0,a_m)\, , & & x\in\Om\,,\label{u4a}
\end{align}
\end{subequations}
where 
$$
\mathcal{B} u:=(1-\delta)u+\delta \partial_\nu u\,,\quad \delta\in\{0,1\}\,,
$$ 
means either Dirichlet boundary conditions $u\vert_{\partial\Omega}=0$ if $\delta=0$ or Neumann boundary conditions~$\partial_\nu u=0$ if $\delta=1$. Note that the evolution problem~\eqref{Eu1a} exhibits hyperbolic (due to the aging term) and parabolic (due to the diffusion) features and involves a nonlocal condition~\eqref{u2a} with respect to age.

Since many years linear and nonlinear age-structured populations with spatial diffusion have been the focus of intensive research, see e.g. \cite{WebbSpringer,WalkerJEPE} and the references therein. In particular, problems of the form~\eqref{Eu1a} \cite{DelgadoMolinaSuarez05,Rhandi98,RhandiSchnaubelt_DCDS99,WalkerDCDSA10} or variants thereof such as models including nonlocal diffusion~\cite{KangRuanJMB21} or compartmental models for infectious diseases spreading \cite{ChekrounKuniya19,ChekrounKuniya20a,ChekrounKuniya20b,DucrotMagal10,DucrotMagal11,KuniyaOizumi15} have been studied  by various authors addressing questions related e.g. to well-posedness or qualitative aspects under different assumptions (none of these reference lists is close to being complete though). The present paper contributes to the study of stability of equilibria to ~\eqref{Eu1a}. While most research so far on stability of equilibria in age-structured diffusive populations apply the principle of linearized stability in an ad-hoc fashion, the aim of the present paper is to provide a proof therefor.

The existence of (nontrivial) equilibria (i.e. time-in\-de\-pen\-dent solutions) to~\eqref{Eu1a} has been established under fairly general conditions by the author in a series of papers using fixed point methods~\cite{WalkerJDE10} or bifurcations techniques~\cite{WalkerSIMA09,WalkerJDE10,WalkerJDDE13}. 
A principle of linearized stability for age-structured populations without spatial diffusion was established in~\cite{Pruess83}, see also~\cite{WebbBook}. As for the case including spatial diffusion a criterion for linearized stability was derived in a recent paper~\cite{WalkerZehetbauerJDE}. Herein, we shall refine and simplify considerably this stability result and complement it with an instability result. In particular, we show that the spectrum of the linearization (as an unbounded operator) indeed consists  of eigenvalues only whose real parts determine stability and instability. \\

To give a first flavor of our findings we present a paraphrased version for the particular case of the trivial equilibrium $\phi=0$. In the next section we will state a more general version for an arbitrary equilibrium.

Writing  the unique strong solution $v$ to the heat equation
\begin{align*}
\partial_a v=\mathrm{div}_x\big(d(a,x)\nabla_xv\big)\,,\quad (a,x)\in J\times \Omega\,,\qquad  
v(0,x)=v_0(x)\,,\quad x\in\Omega\,,
\end{align*}
subject to the boundary condition $\mathcal{B} v=0$ on $\partial\Omega$ and the initial value $v_0\in L_q(\Omega)$ in the form $v(a)=\Pi_*(a,0)v_0$, $a\in J$, we define by
\begin{align*}
Q_0:=
\int_0^{a_m} b(0,a)\, \exp\left(-\int_0^a m(0,\sigma)\,\rd \sigma\right)\,\Pi_{*}(a,0)\,\rd a
\end{align*}
a (compact and positive) operator on $L_q(\Omega)$ and denote by  $r(Q_{0})$ its spectral radius. Then the stability property  of the trivial equilibrium is determined according to:

\begin{prop}\label{P21xx}
Let $q>n$ and assume that 
$$
d:J\times\bar\Omega\to (0,\infty)\,,\qquad 
b: \R\times J \to (0,\infty)\,,\qquad m: \R\times J \to \R^+ 
$$
are (sufficiently) smooth functions. 
\begin{itemize}
\item[\bf (a)] If $r(Q_{0})<1$, then the trivial equilibrium to~\eqref{Eu1a} is exponentially asymptotically stable in~$L_1\big((0,a_m),W_{q}^{1}(\Omega)\big)$.

\item[\bf (b)] If $r(Q_{0})>1$, then the trivial equilibrium to~\eqref{Eu1a} is unstable in~$L_1\big((0,a_m),W_{q}^{1}(\Omega)\big)$.
\end{itemize}
In case of Neumann boundary conditions (i.e. $\delta=1$), the spectral radius is
$$
r(Q_{0})=\int_0^{a_m} b(0,a)\, \exp\left(-\int_0^a m(0,s)\rd s\right)\,\rd a\,.
$$
\end{prop}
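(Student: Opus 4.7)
The plan is to view \eqref{Eu1a} as a semilinear Cauchy problem $\dot u=\mathbb{A}u+F(u)$ on the phase space $X:=L_1\big((0,a_m),W_q^1(\Omega)\big)$, with $\mathbb{A}$ the formal linearization at $\phi=0$, namely
\begin{align*}
\partial_t u+\partial_a u &= \divv\big(d(a,x)\nabla_x u\big)-m(0,a)\,u\,, \\
u(t,0,\cdot) &= \int_0^{a_m} b(0,a)\,u(t,a,\cdot)\,\rd a\,,
\end{align*}
together with $\mathcal{B}u=0$, and with $F$ gathering the remainders in $\bar u$ which vanish to order at least two at $0$ by smoothness of $b,m$. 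The strategy is to reduce Proposition~\ref{P21xx} to the general principle of linearized stability and instability (the main result of the paper) by identifying the spectrum of $\mathbb{A}$ with $\{\lambda\in\C:1\in\sigma(Q_\lambda)\}$, where
\[
Q_\lambda:=\int_0^{a_m}b(0,a)\,e^{-\lambda a}\exp\!\Big(-\!\int_0^a m(0,\sigma)\,\rd\sigma\Big)\Pi_*(a,0)\,\rd a\,.
\]

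First I would realize $\mathbb{A}$ as the generator of a $C_0$-semigroup $T(t)$ on $X$. Integrating the characteristic ODE in $a$ for prescribed inflow $u(t,0,\cdot)=\varphi(t)$ yields, for $a\le t$,
\[
u(t,a,\cdot)=\exp\!\Big(-\!\int_0^a m(0,\sigma)\,\rd\sigma\Big)\Pi_*(a,0)\,\varphi(t-a)\,,
\]
with an analogous expression in $u_0$ for $a>t$; plugging into the birth law yields a Volterra equation for $\varphi$ that is solved by Neumann-series iteration and thereby defines $T(t)$.

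The main technical obstacle is to show that $T(t)$ is eventually compact, say for $t\ge a_m$. Once $t\ge a_m$ the initial datum has been transported out of $(0,a_m)$, so $T(t)u_0$ is given entirely by iterates of the birth operator composed with the parabolic evolution $\Pi_*$. Since $\Pi_*(a,0)$ smooths $L_q(\Omega)$ into $W_q^2(\Omega)$ for $a>0$, each such composition factors through the compact embedding $W_q^1(\Omega)\dhr L_q(\Omega)$, yielding compactness. The efficient way to organise this is to treat the birth inflow as a boundary perturbation of the ``diffusion and death only'' semigroup and iterate the resulting Duhamel formula; this is the perturbation argument alluded to in the abstract.

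Eventual compactness then gives $\omega_0(T)=s(\mathbb{A})$ and reduces $\sigma(\mathbb{A})$ to isolated eigenvalues of finite algebraic multiplicity. Solving $(\lambda-\mathbb{A})u=0$ along characteristics gives $u(a,\cdot)=e^{-\lambda a}\exp\!\big(-\!\int_0^a m(0,\sigma)\,\rd\sigma\big)\Pi_*(a,0)u(0,\cdot)$, and substituting into the birth boundary condition yields $u(0,\cdot)=Q_\lambda u(0,\cdot)$, so $\lambda\in\sigma(\mathbb{A})\Leftrightarrow 1\in\sigma(Q_\lambda)$. Because $Q_\lambda$ is compact and positive and $\lambda\mapsto r(Q_\lambda)$ is continuous, strictly decreasing on $\R$, with $r(Q_\lambda)\to 0$ as $\lambda\to\infty$, the hypothesis $r(Q_0)<1$ forces $\sigma(\mathbb{A})\subset\{\mathrm{Re}\,\lambda<0\}$ and yields exponential asymptotic stability, while $r(Q_0)>1$ produces, by continuity and Krein--Rutman, a real $\lambda_0>0$ with $r(Q_{\lambda_0})=1$ and hence instability. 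The Neumann case is immediate from $\Pi_*(a,0)\mathbf{1}_\Omega=\mathbf{1}_\Omega$: the constant function is a positive eigenfunction of $Q_0$, and by Krein--Rutman its eigenvalue coincides with $r(Q_0)$.
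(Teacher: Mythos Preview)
Your approach is essentially the paper's: reduce to the general linearized stability/instability principle, identify the eigenvalue problem for the linearization at $\phi=0$ with the characteristic equation $1\in\sigma(Q_\lambda)$, and then use monotonicity of $\lambda\mapsto r(Q_\lambda)$ together with Krein--Rutman for the Neumann formula. Two remarks are in order.

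First, a small but genuine gap: from the strict decrease of $\lambda\mapsto r(Q_\lambda)$ on $\R$ and $r(Q_0)<1$ you only conclude that no \emph{real} $\lambda\ge 0$ satisfies $1\in\sigma(Q_\lambda)$. You still need to rule out complex eigenvalues with nonnegative real part. The paper (via \cite{WalkerIUMJ}) closes this by observing that the linearized semigroup is \emph{positive} on the Banach lattice $\E_0$; positivity plus eventual compactness forces $s(\mathbb{A})\in\sigma(\mathbb{A})\cap\R$ (when finite), and one then shows $s(\mathbb{A})$ equals the unique real $\lambda_0$ with $r(Q_{\lambda_0})=1$. With that, $r(Q_0)<1$ gives $s(\mathbb{A})=\lambda_0<0$ and hence all eigenvalues have negative real part. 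You should make this positivity step explicit.

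Second, a clarification about the ``perturbation argument'': for the trivial equilibrium the Fr\'echet-derivative perturbation $\B_\phi$ (coming from $\partial m(\bar\phi,\cdot)[\cdot]\phi$) vanishes because $\phi=0$, so no perturbation is needed here at all---the eventual compactness of the semigroup generated by $\A_0$ (with $b(0,\cdot)$ built into the boundary condition) is taken directly from \cite{WalkerIUMJ}. The perturbation machinery in the paper is required only for nontrivial equilibria, where the nonlocal term $\partial m(\bar\phi,a)[\bar w]\phi(a)$ enters.
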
 

Proposition~\ref{P21xx} is a special case of Proposition~\ref{P21} below.
In fact, we can prove a much more general result for an arbitrary equilibrium. For this purpose, we shall consider problem~\eqref{Eu1a} in an abstract setting and introduce the notation\footnote{We suppress the $x$-variable consistently in the abstract formulation by considering  functions with values in function spaces on $\Omega$. In particular, the data $m$, $b$, $\varrho$ may  include an $x$-dependence and $u(t,a)\in E_0$.}
$$
A(a)w:=\mathrm{div}_x\big(d(a,\cdot)\nabla_xw\big)\, ,\quad w\in E_1\, ,
$$
where e.g. $E_1:=\Wqb^2(\Om)$ with $q\in (1,\infty)$ denotes the Sobolev space of functions \mbox{$w\in W_q^{2}(\Om)$}  satisfying the boundary condition $\mathcal{B} w=0$ on $\partial\Om$. Then $A(a)$ is for each $a\in J$ the generator of an analytic semigroup on the Banach lattice $E_0:=L_q(\Omega)$ with compactly and densely embedded domain $E_1$. 
The abstract formulation of~\eqref{Eu1a} now reads
\begin{subequations}\label{PPPP} 
\begin{align}
\partial_t u+ \partial_au \, &=     A(a)u -m(\bar u(t),a)u \,, \qquad t>0\, ,\quad a\in (0,a_m)\, ,\label{P1}\\ 
u(t,0)&=\int_0^{a_m}b(\bar u(t),a)\, u(t,a)\, \rd a\,, \qquad t>0\, ,\label{P2} \\
u(0,a)&=  u_0(a)\,, \qquad a\in (0,a_m)\,,
\end{align}
\end{subequations}
with
$$
\bar u(t):=\int_0^{a_m}\varrho(a) \, u(t,a)\, \rd a\,.
$$
We then shall focus on~\eqref{PPPP} and present our main stability result for this problem. Later we interpret our findings for the concrete equation~\eqref{Eu1a} and variants thereof. The regularizing effects from the diffusion, reflected in~\eqref{PPPP} by the operator $A$, are of great importance since they will allow us to handle the nonlinearities under mild assumptions (mainly on the regularity of the vital rates $m$ and $b$).

\section{Main Results}\label{Sec2}

\subsection*{General Assumptions and Notations}

Set $J:=[0,a_m]$. Throughout the following,~$E_0$ is a real Banach lattice ordered by a closed convex cone $E_0^+$  (in the following we do not distinguish $E_0$ from its complexification required at certain points) and 
\begin{equation*}
E_1\stackrel{d}{\dhr} E_0\,,
\end{equation*}
that is, $E_1$ is a densely and compactly embedded subspace of $E_0$. We write $\ml(E_1,E_0)$ for the Banach space of bounded linear operators from $E_1$ to $E_0$, set \mbox{$\ml(E_0):=\ml(E_0,E_0)$}, and denote by~$\ml_+(E_0)$  the positive operators. For a (possibly unbounded) operator $$\mathcal{A}:\mathrm{dom}(\mathcal{A})\subset E_0\to E_0$$ we mean by $D(\mathcal{A})$ its domain $\mathrm{dom}(\mathcal{A})$ endowed with the graph norm. For $\theta\in (0,1)$ and an admissible interpolation functor $(\cdot,\cdot)_\theta$ (see~\cite{LQPP}), we put $E_\theta:= (E_0,E_1)_\theta$ and  equip it with the order naturally induced by $E_0^+$. We use the notion $$\E_\theta:=L_1(J,E_\theta)\,,\quad \theta\in [0,1]\,,$$
and observe $\E_\theta\hookrightarrow \E_0$ for $\theta\in [0,1]$.
We assume that there is $\rho>0$ such that
\begin{subequations}\label{A1}
\begin{equation}
A\in  C^\rho\big(J,\mathcal{H}(E_1,E_0)\big)
\end{equation}
and
\begin{equation}\label{A1aa}
	A(a) \ \text{ is resolvent positive for each $a\in J$}\,,
\end{equation}
\end{subequations}
where $\mathcal{H}(E_1,E_0)$ is the subspace of $\ml(E_1,E_0)$ consisting of all generators of analytic semigroups on $E_0$ with domain $E_1$. 
Then \eqref{A1} and \cite[II.Corollary 4.4.2]{LQPP} imply that $A$ generates a positive, parabolic evolution operator 
$$
\big\{\Pi(a,\sigma)\in\ml(E_0)\,;\, a\in J\,,\, 0\le\sigma\le a\big\}\,,
$$ 
on $E_0$ with regularity subspace $E_1$ in the sense of \cite[Section~II.2.1]{LQPP} (see Appendix~\ref{App:EvolSys} for a summary of the most important properties of parabolic evolution operators). 



\subsection*{Well-Posedness} 


Before stating our main stability result, let us recall the well-posedness of the nonlinear problem~\eqref{PPPP} established in \cite{WalkerDCDSA10,WalkerZehetbauerJDE}. In the following, let $\alpha\in[0,1)$ be fixed. We assume for the birth and the death rate that
\begin{subequations}\label{A11}
\begin{align}\label{A1b}
\big[\bar v\to b(\bar v,\cdot)]&\in C_b^{1-}\big( E_\alpha, L_{\infty}^+\big(J,\ml(E_\alpha,E_0)\big)\big)\,,\\
	\big[\bar v\to m(\bar v,\cdot)]&\in C_b^{1-}\big( E_\alpha, L_{\infty}^+\big(J,\ml(E_\alpha,E_0)\big)\big)\,,\label{A1c}
\end{align}
where $C_b^{1-}$ stands for locally Lipschitz continuous maps that are bounded on bounded sets. 
The weight function $\varrho$ is such that  
\begin{equation}\label{A1d}
	\varrho\in C\big(J,\ml_+(E_\theta)\big)\,,\quad \theta\in\{0,\alpha,\vartheta\}\,,
\end{equation}
\end{subequations}
for some $\vartheta\in (0,1)$ (if $\alpha\in (0,1)$, then it suffices to take $\vartheta=\alpha$). We   use the notation
$$
\bar v:=\int_0^{a_m} \varrho(a)\,v(a)\,\rd a\in E_\theta\,,\quad v\in \E_\theta\,.
$$

Observe that integrating \eqref{PPPP} formally along characteristics  yields the necessary condition that a solution $u:\R^+\rightarrow\E_0$ 
 with initial value $u_0\in\E_0$ satisfies the fixed point equation
\begin{subequations}\label{1000}
  \begin{equation}\label{u}
     u(t,a)\, =\, \left\{ \begin{aligned}
    &\Pi(a,a-t)\, u_0(a-t) + G_{F(u)}(t,a)\, ,& &   a\in J\,,\ 0\le t\le a\, ,\\
    & \Pi(a,0)\, B_u(t-a)+ G_{F(u)}(t,a)\, ,& &  a\in J\, ,\ t>a\, ,
    \end{aligned}
   \right.
    \end{equation}
where $F(u):=-m(\bar u,\cdot)u$ and
\begin{equation}\label{4000}
G_{v}(t,a):=\int_{(t-a)_+}^{t}\Pi(a,a-t+s)\, v(s,a-t+s)\,\rd s
 \end{equation}
for $v:\R^+\rightarrow \E_\alpha$,
and where $B_u$ satisfies the Volterra equation 
    \begin{equation}\label{5000}
    \begin{split}
    B_u(t)\, & =\, \int_0^t  b(\bar u(t),a) \Pi(a,0) B_u(t-a)\, \rd
    a + \int_t^{a_m} b(\bar u(t),a)\Pi(a,a-t)\, u_0(a-t)\, \rd a\\
    &\quad +\int_0^{a_m} b(\bar u(t),a) G_{F(u)}(t,a)\,\rd a
	\end{split}
    \end{equation}		
for $t\ge 0$ (we set $b(\bar v,a):=0$ whenever $a\notin J$). That is, $u(t,0)=B_u(t)$ for $t\ge 0$ by ~\eqref{u}, while~\eqref{5000} implies
\begin{equation*}\label{5000b}
	\begin{split}
		B_u(t)\, & =\, \int_0^{a_m}  b(\bar u(t),a)\, u(t,a)\, \rd
		a\,,\quad t\ge 0\,.
	\end{split}
\end{equation*}
\end{subequations}
The following result was established in \cite{WalkerZehetbauerJDE} (see also \cite{WalkerDCDSA10}):

\begin{prop}\label{T1JDE}
Suppose \eqref{A1} and \eqref{A11}. 
For every $u_0\in \E_\alpha$ there exists a unique maximal solution 
$u=u(\cdot;u_0)\in C\big(I(u_0),\E_\alpha\big)$
to problem~\eqref{PPPP} on some maximal interval of existence $I(u_0)=[0,T_{max}(u_0))$; that is, $u(t;u_0)$ satisfies~\eqref{1000} for $t\in I(u_0)$. 
 If 
$$
\sup_{t\in I(u_0)\cap [0,T]} \| u(t;u_0)\|_{\E_\alpha} <\infty
$$
for every $T>0$, then the solution exists globally, i.e., $I(u_0)=\R^+$.
Finally, if $u_0\in\E_\alpha^+$, then  $u(t;u_0)\in\E_\alpha^+$ for~$t\in I(u_0)$.
\end{prop}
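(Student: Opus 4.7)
The plan is a contraction-mapping argument carried out directly on the mild formulation~\eqref{1000}, exploiting the smoothing estimates for the parabolic evolution operator $\Pi$ recalled in Appendix~\ref{App:EvolSys}. Fix $u_0\in\E_\alpha$ and $R>0$, and for small $T>0$ consider the complete metric space
\[
X_T:=\bigl\{v\in C([0,T],\E_\alpha)\,:\,v(0)=u_0,\ \sup_{t\in[0,T]}\|v(t)-u_0\|_{\E_\alpha}\le R\bigr\}
\]
equipped with the sup-norm. Given $v\in X_T$, I would first solve the linear Volterra equation~\eqref{5000} for $B_v:[0,T]\to E_\alpha$ by Banach fixed point, using the smoothing estimate $\|\Pi(a,0)\|_{\ml(E_0,E_\alpha)}\lesssim a^{-\alpha}$ (integrable in $a$ since $\alpha<1$), the uniform boundedness of $b(\bar v(t),\cdot)$ in $\ml(E_\alpha,E_0)$ from~\eqref{A1b}, and the continuity of $t\mapsto\bar v(t)$ in $E_\alpha$ guaranteed by~\eqref{A1d}. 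The right-hand side of~\eqref{u} then defines a map $\Phi(v)\in C([0,T],\E_\alpha)$, and local existence reduces to showing that $\Phi:X_T\to X_T$ is a strict contraction.

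For the self-mapping and Lipschitz estimates, the two delicate terms are the translated initial datum $\Pi(a,a-t)u_0(a-t)$, handled by strong continuity of $\Pi$ on $E_\alpha$ combined with continuity of translations in $L_1(J,E_\alpha)$, and the Duhamel term $G_{F(v)}$, for which the parabolic smoothing yields, after Fubini in $a$,
\[
\|G_{F(v)}(t,\cdot)\|_{\E_\alpha}\le C\int_0^t (t-s)^{-\alpha}\,\|m(\bar v(s),\cdot)\|_{L_\infty(J,\ml(E_\alpha,E_0))}\,\|v(s)\|_{\E_\alpha}\,\rd s\,;
\]
this quantity vanishes with $T$ because $\alpha<1$. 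The Lipschitz bounds for $\Phi(v)-\Phi(\tilde v)$ use the same smoothing together with the local Lipschitz continuity of $m$ and $b$ from~\eqref{A1b}--\eqref{A1c} and the Lipschitz dependence of $v\mapsto\bar v$ from $\E_\alpha$ to $E_\alpha$. Uniqueness and continuous dependence on the initial datum follow from the very same estimates.

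The local solution is extended to a maximal interval $[0,T_{max})$ in the standard way, and the blow-up alternative is obtained by observing that the length of the existence interval provided by the contraction argument depends only on the $\E_\alpha$-norm of the starting datum: if that norm remained bounded as $t\nearrow T_{max}$, one could restart the local result just before $T_{max}$ and continue past it, contradicting maximality. For $u_0\in\E_\alpha^+$, positivity of $u(t;u_0)$ follows by reformulating~\eqref{1000} in terms of the evolution operator $\Pi^v$ associated with $A(\cdot)-m(\bar v(\cdot),\cdot)$, which is itself positive (as a bounded multiplicative perturbation of the positive $\Pi$ from~\eqref{A1aa}), together with the positivity of $b$ and $\varrho$ in~\eqref{A11}: the Picard iterates then preserve $\E_\alpha^+$, which is closed in $\E_\alpha$.

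The main technical obstacle is the borderline integrability of $(t-s)^{-\alpha}$ in the parabolic smoothing estimate: for $\alpha$ close to $1$ the time-regularity gained per application of $\Phi$ is barely enough, and a careful tracking of the interpolation and Fubini constants in both the Duhamel and Volterra terms is needed to guarantee that the contraction constant becomes strictly smaller than $1$ for sufficiently small $T$. All other steps are routine once this estimate is in place.
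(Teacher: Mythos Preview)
The paper does not prove this proposition; it simply cites \cite[Proposition~2.1]{WalkerZehetbauerJDE} (building on \cite{WalkerDCDSA10}). Your contraction-mapping sketch on the mild formulation~\eqref{1000}, using the parabolic smoothing $\|\Pi(a,\sigma)\|_{\ml(E_0,E_\alpha)}\lesssim (a-\sigma)^{-\alpha}$ to control both the Volterra kernel in~\eqref{5000} and the Duhamel term $G_{F(v)}$, is exactly the approach taken in those references, and your treatment of the blow-up alternative is standard and correct.

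One point where your outline is too quick is the positivity argument. Under the bare hypothesis~\eqref{A1c}, the map $a\mapsto m(\bar v(t),a)$ is only $L_\infty$ in $a$ (not H\"older), so you cannot directly form a parabolic evolution operator for $A(\cdot)-m(\bar v(\cdot),\cdot)$ via \cite[II.~Corollary~4.4.2]{LQPP}; moreover, $m$ depends on $t$ as well as $a$, so the object you would need is the evolution along each characteristic separately. Finally, in the abstract setting $m(\bar v,a)\in\ml_+(E_\alpha,E_0)$ is merely a positive operator, not literally a multiplication, so ``bounded multiplicative perturbation'' does not automatically yield positivity of the perturbed evolution. In the cited references this is handled either by imposing slightly stronger regularity on $m$ (as in~\eqref{B2bb} for the linearized problem) or by a direct iteration argument that exploits positivity of $\Pi$, $b$, and $\varrho$ together with the specific structure of~\eqref{u}. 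The idea you indicate is the right one, but it needs this extra care to go through under~\eqref{A11} alone.
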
 

\begin{proof}
This is \cite[Proposition 2.1]{WalkerZehetbauerJDE}.
\end{proof}

Note that assumptions~\eqref{A1} and~\eqref{A11} are not really restrictive and satisfied for (sufficiently) smooth functions~$m, b, \varrho$ and diffusion operators as in the introduction, see Section~\ref{Sec5}.


\subsection*{Linearized Stability and Instability}

In the following, an equilibrium (i.e. a time-independent solution) $\phi\in C(J,E_\alpha)$ to~\eqref{PPPP} is a mild solution (see \eqref{VdKx}) to 
	\begin{align}\label{EP} 
		\partial_a\phi \, &=     A(a)\phi -m(\bar \phi,a)\phi \,, \quad a\in (0,a_m)\, ,\qquad
		\phi(0)=\int_0^{a_m}b(\bar \phi ,a)\, \phi(a)\, \rd a\,. 
	\end{align}
Clearly, $\phi\equiv 0$ is always an equilibrium. As pointed out above,  fairly general conditions sufficient for the existence of at least one positive non-trivial equilibrium $\phi\in\E_1 \cap C(J,E_\alpha)$ were presented in earlier works~\cite{WalkerSIMA09,WalkerJDE10,WalkerJDDE13}.

An equilibrium $\phi\in C(J,E_\alpha)$ to~\eqref{PPPP} is said to be {\it stable} in $\E_\alpha$ provided that for every $\ve>0$ there exists $\delta>0$ such that, if $u_0\in \mathbb{B}_{\E_\alpha}(\phi,\delta)$, then $T_{max}(u_0)=\infty$ and $u(t;u_0)\in \mathbb{B}_{\E_\alpha}(\phi,\ve)$ for every $t\ge 0$, where $u(\cdot,u_0)$ denotes the maximal solution to~\eqref{PPPP} from Proposition~\ref{T1JDE}. The equilibrium $\phi\in C(J,E_\alpha)$ is {\it asymptotically exponentially stable} in $\E_\alpha$, if it is stable and there are $r>0$ and $M>0$ such that 
$$
\|u(t;u_0)-\phi\|_{\E_\alpha}\le M e^{-rt}\|u_0-\phi\|_{\E_\alpha}\,,\quad t\ge 0\,,
$$ 
for $u_0\in \mathbb{B}_{\E_\alpha}(\phi,\delta)$. Finally,  an equilibrium $\phi\in C(J,E_\alpha)$ is {\it unstable} in $\E_\alpha$, if it is not stable.

\subsubsection*{Assumptions}
Let $\phi\in\E_1 \cap C(J,E_\alpha)$ be a fixed equilibrium  to~\eqref{PPPP}. We assume that the death and the birth rate are continuously Fr\'echet differentiable at $\bar\phi$. More precisely, for  $\alpha\in [0,1)$ still fixed, we assume that
 \begin{subequations}\label{B}
\begin{align}
	& E_\alpha\rightarrow L_{\infty}\big(J,\ml(E_\alpha,E_0)\big)\,,\ z\mapsto m(z,\cdot) \text{ is differentiable at $\bar\phi\in E_1$}\,,\label{B1}\\
		& E_\alpha\rightarrow L_{\infty}\big(J,\ml(E_\alpha,E_0)\big)\,,\ z\mapsto b(z,\cdot) \text{ is differentiable at $\bar\phi\in E_1$}\,,\label{B2}
	\end{align}
such that for  $v\in \E_0$ we can write (with $\partial$ indicating Fr\'echet derivatives with respect to~$\bar\phi$)
\begin{equation}\label{14Ga}
			m(\bar v,\cdot)v-m(\bar \phi,\cdot)\phi=m(\bar\phi,\cdot)( v- \phi) +\partial m(\bar\phi,\cdot)[\bar v-\bar \phi]\phi+ R_m( v- \phi)
		\end{equation}
and
\begin{equation}\label{14Ea}
			b(\bar v,\cdot)v-b(\bar \phi,\cdot)\phi=b(\bar\phi,\cdot)( v- \phi) +\partial b(\bar\phi,\cdot)[\bar v-\bar \phi]\phi+ R_b( v- \phi)\,,
		\end{equation}
where for the reminder terms $R_m:\E_\alpha\to \E_0$ and $R_b:\E_\alpha\to \E_0$ there exists an increasing function~$d_o\in C(\R^+,\R^+)$ with $d_o(0)=0$ such that $d_o(r)>0$ for each $r>0$ with
	\begin{equation}\label{14G2}
		\|R_m(v)\|_{\E_0}+	\|R_b( v)\|_{\E_0}\le d_o(r)\, \| v\|_{\E_\alpha}\,,\quad \|v\|_{\E_\alpha}\le r\,,
		\end{equation}
and
\begin{equation}\label{do2}
\|R_m(v_1)-R_m(v_2)\|_{\E_0} +\|R_b(v_1)-R_b(v_2)\|_{\E_0}\le d_o(r)\,\|v_1-v_2\|_{\E_\alpha}\,,\quad \|v_1\|_{\E_\alpha}, \|v_2\|_{\E_\alpha}\le r\,.
\end{equation}
For technical reasons,  we assume for the birth rate that (for some $\vartheta\in (0,1)$, see~\eqref{A1d})
\begin{equation}\label{B3}
	\begin{split}
		b(\bar \phi,\cdot)\in   C\big(J,\ml(E_0)\big)\cap L_{\infty}\big(J,\ml(E_\theta)\big)\,,\quad \theta\in\{0,\alpha,\vartheta\}\,,
	\end{split}
\end{equation}
and 
\begin{equation}\label{B2b}
\big[z\mapsto \partial b(\bar \phi,\cdot)[z]\phi\big]\in \ml\big(E_\theta, \E_\theta\big)\,,\quad \theta\in\{0,\alpha,\vartheta\}\,,
\end{equation}
while for the death rate we impose that
\begin{equation}\label{B2bb}
m(\bar \phi,\cdot)\in   C^\rho\big(J,\ml(E_\beta,E_0)\big)
\end{equation}
for some $\beta\in [0,1)$ and
\begin{equation}\label{B2bbb}
\big[a\mapsto \partial m(\bar \phi,a)[\cdot]\phi(a)\big]\in C\big(J,\ml(E_0)\big)\,.
\end{equation} 
\end{subequations}
The fact that we can handle nonlinearities $m$ and $b$ being defined on interpolation spaces~$E_\alpha$  guarantees great flexibility in concrete applications. Indeed, the assumptions imposed above are rather easily checked in  problems such as~\eqref{Eu1a}  since they are mainly assumptions on the regularity of the data (see Section~\ref{Sec5} for details).  \\

In \cite{WalkerZehetbauerJDE} it was shown that the stability of an equilibrium $\phi$ can be deduced from the (formal) linearization of~\eqref{PPPP} at~$\phi$ given by
\begin{subequations}\label{EPx} 
\begin{align} 
	\partial_t v+ \partial_av \, &=     A(a)v -m\big(\bar \phi,a\big) v -\partial m\big(\bar \phi,a\big)[\bar v(t)]\phi(a)\,, \qquad t>0\, ,\quad a\in (0,a_m)\, ,\label{e12}\\ 
	v(t,0)&=\int_0^{a_m}b\big(\bar \phi,a\big)\, v(t,a)\, \rd a +\int_0^{a_m}\partial b\big(\bar \phi,a\big)[\bar v(t)]\, \phi(a)\, \rd a \,, \qquad t>0\, , \label{e12b}  \\
	v(0,a)&=  v_0(a)\,, \qquad a\in (0,a_m)\,,
\end{align}
\end{subequations}  
with $\partial$ indicating Fr\'echet derivatives with respect to $\bar\phi$. More precisely, according to~\cite{WalkerZehetbauerJDE}, an equilibrium~$\phi$ is locally asymptotically stable if the semigroup associated with the linearization~\eqref{EPx} has a negative growth bound. The characterization of the latter, however, was left open. The aim of the present research now is to refine and improve this stability result and complement it with an instability result. Concretely, we prove that the real parts of the eigenvalues of the generator of the semigroup associated with the linearization~\eqref{EPx} determine stability or instability of the equilibrium. Thus, we establish the classical principle of linearized stability for~\eqref{PPPP}. 

In the following, an {\it eigenvalue} of the generator associated with~\eqref{EPx} means a number $\lambda\in\C$ for which there is a nontrivial mild solution $w\in C(J,E_0)$, $w\not\equiv 0$, to
\begin{subequations}\label{eigenvalueproblem} 
\begin{align} 
	\lambda w+ \partial_aw \, &=     A(a)w -m(\bar \phi,a) w -\partial m(\bar \phi,a)[\bar w]\phi(a)\,, \qquad a\in (0,a_m)\, ,\label{k1}\\ 
	w(0)&=\int_0^{a_m}b(\bar \phi,a)\, w(a)\, \rd a +\int_0^{a_m}\partial b(\bar \phi,a)[\bar w]\, \phi(a)\, \rd a \,.
\end{align}
\end{subequations}
Here is the main result:\vspace{2mm}

\begin{thm}~\label{MainT}
Let $\alpha\in [0,1)$. Assume \eqref{A1}, \eqref{A11}, and \eqref{B}, where $\phi\in\E_1 \cap C(J,E_\alpha)$ is an equilibrium to~\eqref{PPPP}.  The following hold:
\begin{itemize}
\item[\bf (a)] If $\mathrm{Re}\,\lambda < 0$ for any eigenvalue $\lambda$ to~\eqref{eigenvalueproblem},   then $\phi$  is exponentially asymptotically stable in~$\E_\alpha$.

\item[\bf (b)] If $\mathrm{Re}\,\lambda > 0$ for some eigenvalue $\lambda$ to~\eqref{eigenvalueproblem},  then $\phi$  is unstable in $\E_\alpha$.
\end{itemize}
\end{thm}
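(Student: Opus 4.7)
My plan is to introduce the $C_0$-semigroup $\{S(t)\}_{t\ge 0}$ on $\E_\alpha$ associated with the linearization~\eqref{EPx}, with generator $\G$, and to establish three pillars on which classical linearized stability rests: first, that $\{S(t)\}$ is eventually compact; second, that consequently $\sigma(\G)$ consists of eigenvalues only with $\omega_0(S)=s(\G)$; and third, that the Lipschitz remainder bounds~\eqref{14G2}--\eqref{do2} suffice to close the nonlinear argument. I would set this up by mimicking the integrated formulation~\eqref{u}--\eqref{5000}: by linearity, any mild solution of~\eqref{EPx} satisfies a representation of exactly the form~\eqref{u}--\eqref{5000} with $F(u)$ replaced by $F_L(v)(t,a):=-m(\bar\phi,a)v(t,a)-\partial m(\bar\phi,a)[\bar v(t)]\phi(a)$ and the Volterra equation~\eqref{5000} for $B_v$ augmented by the feedback $\int_0^{a_m}\partial b(\bar\phi,a)[\bar v(t)]\phi(a)\,\rd a$. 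Well-posedness on $\E_\alpha$ and the semigroup property then follow by a contraction argument of the type already used in Proposition~\ref{T1JDE}.

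The heart of the proof is eventual compactness of $S(t)$ for $t>a_m$. For such $t$, the transport term $\Pi(a,a-t)v_0(a-t)$ in~\eqref{u} vanishes identically, so the solution reduces to the two regularising contributions $\Pi(a,0)B_v(t-a)$ and $G_{F_L(v)}(t,a)$. I would first treat the unperturbed case $\partial m(\bar\phi,\cdot)=0=\partial b(\bar\phi,\cdot)$: the analyticity of $\Pi$, the compact embedding $E_1\dhr E_0$, the regularity~\eqref{B3}, and Fr\'echet--Kolmogorov equicontinuity in the age variable deliver compactness of the corresponding semigroup $S_0(t)$ on $\E_\alpha$. The full semigroup then satisfies a Duhamel relation
\[
S(t)=S_0(t)+\int_0^t S_0(t-s)\,\mathcal{K}\,S(s)\,\rd s\,,
\]
where $\mathcal{K}$ collects the feedback operators $\partial m(\bar\phi,\cdot)[\,\cdot\,]\phi$ and $\partial b(\bar\phi,\cdot)[\,\cdot\,]\phi$. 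Assumptions~\eqref{B2b}--\eqref{B2bbb} ensure that $\mathcal{K}$ maps boundedly into a space of strictly higher regularity (the birth feedback takes values in $\E_\vartheta$), so a Dyson--Phillips expansion shows $S(t)-S_0(t)$ is also compact for $t>a_m$, yielding the eventual compactness of $S(t)$.

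Once this is in hand, the spectral theory of eventually compact semigroups applies verbatim: $\sigma(\G)\setminus\{\lambda_0\}$ is a countable set of isolated eigenvalues of finite algebraic multiplicity with no finite accumulation, the spectral mapping theorem yields $\omega_0(S)=s(\G)$, and those eigenvalues are precisely the $\lambda\in\C$ admitting a nontrivial mild solution to~\eqref{eigenvalueproblem}. Part~\textbf{(a)} then reduces to a standard fixed-point/Gronwall closure: choose $r>0$ with $\omega_0(S)<-r<0$ so that $\|S(t)\|_{\ml(\E_\alpha)}\le Me^{-rt}$, write the nonlinear equation~\eqref{1000} for $u-\phi$ as a perturbation of~\eqref{EPx} with forcing controlled by~\eqref{14G2}--\eqref{do2}, and deduce exponential decay of $e^{rt}\|u(t;u_0)-\phi\|_{\E_\alpha}$. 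For~\textbf{(b)} I would invoke the spectral splitting $\E_\alpha=\E_\alpha^+\oplus\E_\alpha^-$ relative to the finite-dimensional unstable part of $\sigma(\G)$ and argue by contradiction: if $\phi$ were stable then, for initial data $u_0-\phi$ chosen along $\E_\alpha^+$, the linear expansion of $S(t)\vert_{\E_\alpha^+}$ would dominate the nonlinear remainder (activated via~\eqref{14G2}) and force $\|u(t;u_0)-\phi\|_{\E_\alpha}$ to escape every prescribed neighbourhood of $\phi$.

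The main obstacle I expect is the eventual-compactness step on the integral space $\E_\alpha=L_1(J,E_\alpha)$: while the pointwise-in-$a$ compactness of $\Pi(a,0)B_v(t-a)$ in $E_\alpha$ follows quickly from analyticity and the compact embedding, lifting this to compactness of $S(t)$ on $\E_\alpha$ requires uniform equicontinuity in age near $a=0$ (where $\Pi(a,0)$ fails to smooth) and careful tracking of how the Volterra solution $B_v$ depends on $v_0$ through the birth feedback; this is precisely where the perturbation argument alluded to in the abstract is essential.
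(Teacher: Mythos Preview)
Your eventual-compactness step contains a genuine gap. You claim that assumptions~\eqref{B2b}--\eqref{B2bbb} force the feedback operator $\mathcal{K}$ to map into a space of strictly higher regularity, but they do not: \eqref{B2bbb} only says $[a\mapsto\partial m(\bar\phi,a)[\cdot]\phi(a)]\in C(J,\ml(E_0))$, and \eqref{B2b} gives $\ml(E_\theta,\E_\theta)$ for each fixed $\theta$ separately, so for $\zeta\in\E_0$ (or $\E_\alpha$) you only land back in $\E_0$ (or $\E_\alpha$), with no gain. Without that gain, the Dyson--Phillips term $\int_0^t S_0(t-s)\,\mathcal{K}\,S(s)\,\rd s$ is not compact for free, and as the paper stresses, bounded perturbations of eventually compact generators need not yield eventually compact semigroups. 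The paper circumvents this not by regularity of $\mathcal{K}$ but by exploiting the \emph{kernel structure} of the death feedback: one writes $[\B_\phi\zeta](a)=\int_0^{a_m}q(a,\sigma)\zeta(\sigma)\,\rd\sigma$ with $q\in C(J,L_\infty(J,\ml(E_0)))$ and proves directly, via Simon's criterion in $L_1(J,E_0)$, that $\int_0^t\mS(t-s)\,\B_\phi\,\mS(s)\,\rd s$ is compact for every $t>0$ (Proposition~\ref{Prop:NormCont}). The age-equicontinuity you flag at the end is obtained from the continuity of $q$ in its first variable together with the parabolic smoothing of $\Pi_\phi$, not from any smoothing of $\mathcal{K}$ itself. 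A second, related issue: you put the birth feedback $\partial b(\bar\phi,\cdot)[\,\cdot\,]\phi$ into $\mathcal{K}$ and invoke a Duhamel identity, but this term enters through the age-boundary condition~\eqref{e12b}, not as a source in~\eqref{e12}, so it does not fit the formula $S(t)=S_0(t)+\int_0^t S_0(t-s)\mathcal{K}S(s)\,\rd s$; the paper instead absorbs it into a modified birth rate $b_\phi$ (see~\eqref{bell}) so that only the $\partial m$ feedback remains as the bounded perturbation $\B_\phi$.

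For part~{\bf (b)} your contradiction sketch is in the right spirit, but the paper's route is more constructive and worth noting: using the spectral projection $P$ onto the finite unstable part it builds, for each $k\ge 1$, a \emph{backwards} solution $v_k\in C((-\infty,k],\E_\alpha)$ by a contraction argument in a weighted space, shows $v_k$ coincides on $[0,k]$ with $u(\cdot;u_0^k)-\phi$ for suitable $u_0^k\to\phi$, and reads off $\|u(k;u_0^k)-\phi\|_{\E_\alpha}\ge\xi_0>0$. Both approaches need the nontrivial representation of $u(\cdot;u_0)-\phi$ in terms of the linearization semigroup (Proposition~\ref{representation}); this is more delicate than a plain variation-of-constants because of the nonlocal age-boundary term, and your proposal does not address how to obtain it.
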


Theorem~\ref{MainT} follows from Theorem~\ref{TInStable} below and is an extension of \cite[Theorem~2]{Pruess83}, \cite[Theorem~4.13]{WebbBook} to the case with spatial diffusion. We point out again that the ability to work in the spaces $\E_\alpha$ (instead of only in $\E_0$) allows us to treat general nonlinearities in the vital rates $m$ and $b$, see Section~\ref{Sec5}. 

Statement~{\bf (a)} of Theorem~\ref{MainT} refines the stability result of~\cite[Theorem~2.2]{WalkerZehetbauerJDE}. Indeed, one of our main achievements herein is the characterization of the growth bound of the semigroup associated with~\eqref{EPx} in terms of the spectral bound of the semigroup generator. Moreover, with the instability statement~{\bf (b)} we complete~\cite[Theorem~2.2]{WalkerZehetbauerJDE} and so provide a concise characterization of stability or instability of an equilibrium to~\eqref{PPPP} via the linear eigenvalue problem~\eqref{eigenvalueproblem}.
Actually, we shall later see in Section~\ref{SecRephrase} that the latter can be rephrased in a somewhat more accessible way for applications.

The crucial key for the proof of Theorem~\ref{MainT} is to show that the semigroup on $\E_0$ associated with the linear problem~\eqref{EPx} (as well as its restriction to~$\E_\alpha$) is eventually compact. As a consequence the growth bound of the semigroup and the spectral bound of the corresponding generator coincide and further fundamental spectral properties of the generator can be derived. We emphasize that, even though the semigroup associated with~\eqref{EPx} when $m\equiv 0$ is known to have this eventual compactness property (see~\cite{WalkerIUMJ}), it is by no means obvious that this property is inherited when including a nontrivial death rate~$m$. This is due to the fact that, on the one hand, a perturbation of the generator of an eventually compact semigroup does not in general generate again an eventually compact semigroup, and, on the other hand, that the  perturbation $\partial m(\bar \phi,a)[\bar w]\phi(a)$ appearing in~\eqref{e12} constitutes a {\it nonlocal} perturbation with respect to $w$. In order to establish the fundamental compactness property nonetheless, we make use of the particular form of this perturbation, see~\eqref{qqq}.

\subsection*{Paper Outline} In Section~\ref{Sec3} we first focus on the linear problem~\eqref{EPx} and prove the eventual compactness of the corresponding semigroup on $\E_0$ and of its restriction to~$\E_\alpha$.  This and the implied spectral properties of the generator are summarized in Theorem~\ref{T:NormCont} and Corollary~\ref{C:Ealpha}. 

In Section~\ref{Sec4} we use the crucial fact derived in \cite{WalkerZehetbauerJDE} that the difference $u(\cdot;u_0)-\phi$ can be represented in terms of the linearization semigroup associated with~\eqref{EPx}, see~\eqref{20}. Since the growth bound of the linearization semigroup is determined by the spectral bound of the semigroup generator as shown previously in Section~\ref{Sec3}, this yields statement~{\bf (a)} of Theorem~\ref{MainT}. Moreover, the construction of backwards solutions to problem~\eqref{PPPP} in Lemma~\ref{backwards} under the assumptions of statement~{\bf (b)} of Theorem~\ref{MainT} then leads to the instability result. This part of the proof is inspired by~\cite[Theorem~4.13]{WebbBook}. In Proposition~\ref{P47} we give an alternative formulation of the eigenvalue problem~\eqref{eigenvalueproblem}.

In Section~\ref{Sec5} we consider concrete examples and prove, in particular, Proposition~\ref{P21xx} on the stability analysis of the trivial equilibrium to problem~\eqref{Eu1a}. Moreover, we provide an instability result for a nontrivial equilibrium, see Proposition~\ref{P50}.

Finally, two appendices are included. In Appendix~\ref{App:NormCont} we provide the technical and thus postponed proof of Proposition~\ref{Prop:NormCont}. In Appendix~\ref{App:EvolSys} we briefly summarize the main properties of parabolic evolution operators which play an important role in our analysis and are used throughout.


\section{{\bf The Linear Problem}}\label{Sec3}

In order to prepare the proof of Theorem~\ref{MainT} we focus our attention first on the linear problem
\begin{subequations}\label{PP} 
\begin{align}
\partial_t u+ \partial_au \, &=     A_\ell(a)u \,, \qquad t>0\, ,\quad a\in (0,a_m)\, ,\label{1}\\ 
u(t,0)&=\int_0^{a_m}b_\ell(a)\, u(t,a)\, \rd a\,, \qquad t>0\, ,\label{2} \\
u(0,a)&=  \psi(a)\,, \qquad a\in (0,a_m)\,,
\end{align}
\end{subequations}
where $A_\ell$ satisfies
\begin{equation}\label{A1l}
A_\ell\in  C^\rho\big(J,\mathcal{H}(E_1,E_0)\big)
\end{equation}
for some $\rho>0$ and where
we impose  for the birth rate that there is $\vartheta\in (0,1)$ with
\begin{equation}\label{A2l}
b_\ell\in L_{\infty}\big(J,\ml(E_\theta)\big)\,, \quad \theta\in  \{0,\vartheta\}  
\,.
\end{equation}
We then denote by
$$
\big\{\Pi_\ell(a,\sigma)\in\ml(E_0)\,;\, a\in J\,,\, 0\le\sigma\le a\big\}
$$ 
the parabolic evolution operator 
on $E_0$ with regularity subspace $E_1$ generated by $A_\ell$ (see Appendix~\ref{App:EvolSys})
and use the notation
$$
\|b_\ell\|_\theta:=\|b_\ell\|_{L_{\infty}(J,\ml(E_\theta))}\,.
$$

\subsection{Preliminaries}

We first recall some of the results from~\cite{WalkerIUMJ}. Formal integrating of~\eqref{1} along characteristics  entails that the solution 
$$
[\mS(t)\psi](a):=u(t,a)\,,\qquad t\ge 0\,,\quad a\in J\,,
$$ 
to \eqref{PP}  for given $\psi\in\E_0=L_1(J,E_0)$ is of the form
\begin{subequations}\label{100}
  \begin{equation}
     \big[\mS(t) \psi\big](a)\, :=\, \left\{ \begin{aligned}
    &\Pi_\ell(a,a-t)\, \psi(a-t)\, ,& &   a\in J\,,\ 0\le t\le a\, ,\\
    & \Pi_\ell(a,0)\, \mathsf{B}_\psi(t-a)\, ,& &  a\in J\, ,\ t>a\, ,
    \end{aligned}
   \right.
    \end{equation}
with $\mathsf{B}_\psi:=u(\cdot,0)$ satisfying the linear Volterra equation 
    \begin{equation}\label{500}
\begin{split}
    \mathsf{B}_\psi(t)\, =\, &\int_0^t \chi(a)\, b_\ell(a)\, \Pi_\ell(a,0)\, \mathsf{B}_\psi(t-a)\, \rd
    a\, \\ & +\, \int_0^{a_m-t} \chi(a+t)\, b_\ell(a+t)\, \Pi_\ell(a+t,a)\, \psi(a)\, \rd a\, ,\quad
    t\ge 0\, ,
\end{split}
    \end{equation}
\end{subequations}		
where $\chi$ is the characteristic function of the interval $(0,a_m)$. Note that $\mathsf{B}_\psi$ is such that
  \begin{equation}\label{6a}
    \mathsf{B}_\psi(t)= \int_0^{a_m} b_\ell(a) \big[\mS(t)\psi\big](a)\, \rd a\, ,\quad t\ge 0\, .
    \end{equation}
It follows from \cite{WalkerIUMJ} that there is a unique solution 
 \begin{equation}\label{BBBB}
[\psi\mapsto \mathsf{B}_\psi]\in\ml \big(\E_0, C(\R^+,E_0)\big)
 \end{equation} 
to~\eqref{500} and that $(\mS(t))_{t\ge 0}$ defines a strongly continuous semigroup on $\E_0$ enjoying the property of eventual compactness and exhibiting regularizing effects induced by the parabolic evolution operator~$\Pi_\ell$. Moreover, its generator can be characterized fully. We summarize those properties which will be important for our purpose herein:

\begin{thm}\label{IUMJT1}
Suppose \eqref{A1l} and \eqref{A2l}. \vspace{2mm}

{\bf (a)} $(\mS(t))_{t\ge 0}$ defined in \eqref{100} is a strongly continuous, eventually compact semigroup on the space \mbox{$\E_0=L_1(J,E_0)$}.  

If $A_\ell(a)$ is resolvent positive for every $a\in J$ and if $b_\ell\in L_{\infty}\big(J,\ml_+(E_0)\big)$, then the semigroup~$(\mS(t))_{t\ge 0}$ is positive. \vspace{2mm}

{\bf (b)} In fact, given $\alpha\in [0,1)$, the restriction $(\mS(t)\vert_{\E_\alpha})_{t\ge 0}$   defines a strongly continuous  semigroup  on $\E_\alpha$, and there are $M_\alpha\ge 1$ and $\varkappa_\alpha\in\R$ such that 
\begin{equation}\label{E3}
\|\mS(t)\|_{\ml(\E_\alpha)}+t^{\alpha}\|\mS(t)\|_{\ml(\E_0,\E_\alpha)}\le M_\alpha\, e^{\varkappa_\alpha t} \,,\quad t\ge 0\,.
\end{equation}

{\bf (c)} Denote by $\A$ the infinitesimal generator of the semigroup~$(\mS(t))_{t\ge 0}$ on $\E_0$. Then
\mbox{$\psi\in \dom(\A)$} if and only if there exists $\zeta\in \E_0$ such that $\psi\in C(J,E_0)$ is the mild solution to
\begin{equation}\label{psi}
\partial_a\psi = A_\ell(a)\psi -\zeta(a)\,,\quad a\in J\,,\qquad \psi(0)=\int_0^{a_m} b_\ell(a) \psi(a)\,\rd a\,.
\end{equation}
In this case, $\A \psi = \zeta$.

Finally, the embedding $D(\A) \hookrightarrow \E_\alpha$ is continuous and dense for  $\alpha\in[0,1)$.
\end{thm}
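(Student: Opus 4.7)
My approach is to construct $\mS(t)$ directly from \eqref{100} and verify the claims in sequence, relying on the smoothing properties of the parabolic evolution operator $\Pi_\ell$ summarized in Appendix~\ref{App:EvolSys}. The first step is to show that \eqref{500} admits a unique solution $\mathsf{B}_\psi \in C(\R^+, E_0)$ depending linearly and continuously on $\psi \in \E_0$. On a short interval $[0,\tau]$ this follows by a contraction argument in $C([0,\tau], E_0)$ using the bounds $\|b_\ell\|_0 < \infty$ and $\sup_{a \in J}\|\Pi_\ell(a,0)\|_{\ml(E_0)} < \infty$; iteration extends the solution to $\R^+$. Formula~\eqref{100} then defines $\mS(t) \in \ml(\E_0)$. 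The semigroup identity $\mS(t_1+t_2) = \mS(t_1)\mS(t_2)$ is verified by splitting $J$ into the three age ranges determined by $t_1$ and $t_1+t_2$, applying the cocycle relation $\Pi_\ell(a,\sigma)\Pi_\ell(\sigma,\tau) = \Pi_\ell(a,\tau)$, and invoking uniqueness of \eqref{500} to identify the birth term. Strong continuity on $\E_0$ then reduces, by density, to continuity on $C(J, E_1)$, where formula~\eqref{100} is manifestly $t$-continuous, and dominated convergence finishes the argument. Positivity under the stated hypotheses is inherited from positivity of $\Pi_\ell$ and $b_\ell$ through the Volterra iterates, which preserve the positive cone in $C([0,\tau], E_0)$.

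The key step is eventual compactness: for $t \ge a_m$ the first branch of \eqref{100} is inactive, so
\begin{equation*}
[\mS(t)\psi](a) = \Pi_\ell(a,0)\,\mathsf{B}_\psi(t-a), \qquad a \in J.
\end{equation*}
Since $\Pi_\ell(a,0): E_0 \to E_1$ is bounded for each $a > 0$ and $E_1 \dhr E_0$ compactly, the map $\psi \mapsto [\mS(t)\psi](a)$ is compact from $\E_0$ to $E_0$ for every fixed $a > 0$. Relative compactness of $\{\mS(t)\psi : \|\psi\|_{\E_0} \le 1\}$ in $\E_0 = L_1(J, E_0)$ then follows from a Kolmogorov--Riesz criterion: one discards a small $a$-neighbourhood of $0$ (where the parabolic bound $\|\Pi_\ell(a,0)\|_{\ml(E_0,E_1)} \le C a^{-1}$ is integrably singular in $E_0$-norm), and on the complement one combines the pointwise compactness with the H\"older continuity of $a \mapsto \Pi_\ell(a,0)$ and of $\mathsf{B}_\psi$ to obtain uniform $L_1$-equicontinuity of translates.

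Part~(b) follows by inserting the parabolic smoothing estimate $\|\Pi_\ell(a,\sigma)\|_{\ml(E_0, E_\alpha)} \le C(a-\sigma)^{-\alpha} e^{\omega(a-\sigma)}$ into \eqref{100}, together with the $\E_0$-bound on $\mathsf{B}_\psi$ obtained in the first step; strong continuity on $\E_\alpha$ then passes from $\E_0$ by density of $C(J, E_1) \subset \E_\alpha$. For part~(c), differentiating \eqref{100} at $t = 0^+$ formally yields \eqref{psi}: the first branch gives the mild equation $\partial_a \psi = A_\ell(a)\psi - \zeta$, while evaluating \eqref{6a} at $t = 0$ forces the boundary identity $\psi(0) = \int_0^{a_m} b_\ell(a)\psi(a)\,\rd a$. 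The converse is a direct substitution verifying that any such $\psi$ satisfies $t^{-1}(\mS(t)\psi - \psi) \to \zeta$ in $\E_0$. The embedding $D(\A) \hookrightarrow \E_\alpha$ is continuous because any mild solution of \eqref{psi} lies in $C(J, E_\alpha)$ by parabolic regularization, and dense since $\mS(t) D(\A) \subset D(\A)$ with $\mS(t)\psi \to \psi$ in $\E_\alpha$ as $t \to 0^+$. I expect the principal obstacle to be the eventual compactness: reconciling the singular behaviour of $\Pi_\ell(a,0)$ near $a = 0$ with the equicontinuity required by Kolmogorov--Riesz demands a delicate splitting argument and careful exploitation of the H\"older regularity of $A_\ell$ provided by \eqref{A1l}.
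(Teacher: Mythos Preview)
The paper does not prove this theorem in place; it is quoted verbatim from \cite{WalkerIUMJ} (Theorem~1.2, Corollary~1.3, Theorem~1.4 there), with the one-line justification that the positivity hypothesis in that reference is unnecessary for part~(c) because $\|Q_\lambda\|_{\ml(E_0)}<1$ for $\mathrm{Re}\,\lambda$ large. Your sketch therefore has no in-paper proof to be compared against, but it does reconstruct the expected architecture of the external argument: solve~\eqref{500} by contraction on short intervals, verify the semigroup law via the cocycle identity for $\Pi_\ell$, and deduce eventual compactness from the fact that for $t>a_m$ only the second branch of~\eqref{100} survives, so that compactness of $\Pi_\ell(a,0)\in\ml(E_0)$ feeds into a Simon-type criterion in $L_1(J,E_0)$.

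Two places in your sketch are looser than the rest and would need to be tightened in an actual proof. First, you invoke ``H\"older continuity of $\mathsf{B}_\psi$'' for the translation estimate; this is not available as stated. What one has instead is $\|\mathsf{B}_\psi(\tau)\|_{E_\vartheta}\le c\,\tau^{-\vartheta}\|\psi\|_{\E_0}$ (compare~\eqref{B101}), which together with~\eqref{II.Equation(5.3.8)} handles the shift in the $\Pi_\ell(a,0)$-factor, while the shift in the argument of $\mathsf{B}_\psi$ requires a separate uniform-equicontinuity argument on compact subsets of $(0,\infty)$, uniform over $\|\psi\|_{\E_0}\le 1$; this is precisely where the continuity assumption~\eqref{b} on $b_\ell$ and the structure of the Volterra equation are genuinely used (cf.\ the mechanism in Lemma~\ref{P51}). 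Second, your density argument for $D(\A)\hookrightarrow\E_\alpha$ is not correct as written: the fact that $\mS(t)D(\A)\subset D(\A)$ and $\mS(t)\psi\to\psi$ in $\E_\alpha$ does not by itself produce elements of $D(\A)$ approximating a given $\psi\in\E_\alpha$. The standard fix is to use the averages $\psi_n:=n\int_0^{1/n}\mS(s)\psi\,\rd s\in D(\A)$ and invoke the strong continuity on $\E_\alpha$ from part~(b) to obtain $\psi_n\to\psi$ in $\E_\alpha$.
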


\begin{proof}
This follows from \cite[Theorem~1.2, Corollary~1.3, Theorem~1.4]{WalkerIUMJ}\footnote{The positivity assumption \cite[Assumption~(1.5)]{WalkerIUMJ} is in fact not needed in order to prove \cite[Theorem~1.4]{WalkerIUMJ}, since for $\mathrm{Re}\,\lambda>0$ large,  $(1-Q_\lambda)^{-1}\in\ml(E_0)$  is ensured by the fact that   $\|Q_\lambda\|_{\ml(E_0)}< 1$.}.
\end{proof}

That $\psi\in C(J,E_0) \subset \E_0$ is the mild solution to~\eqref{psi} with $\zeta\in \E_0$ means that
$$
\psi(a)=\Pi_\ell(a,0)\psi(0)-\int_0^a\Pi_\ell(a,\sigma)\,\zeta(\sigma)\,\rd \sigma\,,\quad a\in J\,.
$$
The spectrum of the generator $\A$ has been investigated in \cite{WalkerIUMJ}  when the generated semigroup~$(\mS(t))_{t\ge 0}$  is positive.  In the following,  let
$$
s(\A):=\sup\left\{\mathrm{Re}\, \lambda\,;\, \lambda\in\sigma(\A)\right\}
$$ 
be the spectral bound of the generator $\A$  and 
$$
\omega_0(\A):=\inf\left\{\omega\in\R\,;\, \sup_{t>0}\big(e^{-\omega t}\|e^{t\A}\|_{\ml(\E_0)}\big)<\infty\right\}
$$
be the growth bound of the corresponding semigroup $\mS(t)=e^{t\A}$, $t\ge 0$.


\begin{prop}\label{IUMJprop}
Suppose \eqref{A1l} and \eqref{A2l}. Let $A_\ell(a)$ be resolvent positive for every $a\in J$ and  $b_\ell\in L_{\infty}\big(J,\ml_+(E_0)\big)$ such that
$b_\ell(a)\Pi_\ell(a,0)\in \ml_+(E_0)$ is strongly positive\footnote{Recall that if $E$ is an ordered Banach space, then $T\in \ml(E)$ is {\it strongly positive} if $Tz\in E$ is a {\it quasi-interior point} for each $z\in E^+\setminus\{0\}$, that is, if $\langle z',Tz\rangle_{E} >0$ for every $z'\in (E')^+\setminus\{0\}$.} for $a$ in a subset of $J$ of positive measure.
Then $$s(\A)=\omega_0(\A)=\lambda_0\,,$$ where   $\lambda_0\in\R$ is  uniquely determined from the condition $r(Q_{\lambda_0})=1$ with $r(Q_{\lambda})$ denoting for $\lambda\in\R$ the spectral radius of the strongly positive compact operator $Q_\lambda\in\ml(E_0)$ given by
\begin{equation*}
Q_\lambda :=\int_0^{a_m} e^{-\lambda a}\,b_\ell(a)\,  \Pi_\ell(a,0)\,  \rd a\,.
\end{equation*}
\end{prop}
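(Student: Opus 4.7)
The plan is to combine three ingredients: (i) eventual compactness of $(\mS(t))_{t\ge 0}$ forces the growth bound to agree with the spectral bound of $\A$; (ii) an explicit description of the point spectrum of $\A$ in terms of $Q_\lambda$; (iii) Krein--Rutman type monotonicity for the strongly positive compact family $Q_\lambda$.

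\smallskip

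First I would record that by Theorem~\ref{IUMJT1}(a) the semigroup is eventually compact, so $\sigma(\A)$ consists only of eigenvalues of finite algebraic multiplicity, and the classical identity $s(\A)=\omega_0(\A)$ holds. Thus it suffices to identify $s(\A)$ with $\lambda_0$. Next I would characterize the eigenvalues of $\A$: by the description of $\dom(\A)$ in Theorem~\ref{IUMJT1}(c), $\lambda\in\sigma_p(\A)$ with eigenfunction $\psi$ means that $\psi\in C(J,E_0)$ is a mild solution to
\begin{equation*}
\partial_a\psi = A_\ell(a)\psi - \lambda\psi\,,\quad \psi(0)=\int_0^{a_m} b_\ell(a)\psi(a)\,\rd a\,,
\end{equation*}
which by the variation-of-constants formula for the evolution operator forces $\psi(a)=e^{-\lambda a}\Pi_\ell(a,0)\psi(0)$ and hence
\begin{equation*}
\psi(0) = Q_\lambda \psi(0)\,.
\end{equation*}
Therefore $\lambda\in\sigma_p(\A)$ if and only if $1\in\sigma_p(Q_\lambda)$, with $\psi(0)$ a corresponding eigenvector of $Q_\lambda$.

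\smallskip

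Second, I would analyze $\lambda\mapsto r(Q_\lambda)$ on $\R$. Under the standing positivity/strong positivity assumptions, $Q_\lambda$ is a strongly positive compact operator on $E_0$, so by Krein--Rutman $r(Q_\lambda)>0$ is an algebraically simple eigenvalue admitting a quasi-interior eigenvector. Since $Q_\lambda = \int_0^{a_m} e^{-\lambda a} b_\ell(a)\Pi_\ell(a,0)\,\rd a$ depends monotonically on the real parameter $\lambda$ through the scalar factor $e^{-\lambda a}$, the standard monotonicity of the spectral radius for positive compact operators yields that $\lambda\mapsto r(Q_\lambda)$ is continuous and strictly decreasing on $\R$ with limits $+\infty$ at $-\infty$ and $0$ at $+\infty$. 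Hence there is a unique $\lambda_0\in\R$ with $r(Q_{\lambda_0})=1$, and by Krein--Rutman this value $1$ is an eigenvalue of $Q_{\lambda_0}$, so $\lambda_0\in\sigma_p(\A)$; in particular $s(\A)\ge \lambda_0$.

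\smallskip

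Finally, to conclude $s(\A)=\lambda_0$ I need the opposite inequality, which is the step I expect to be the most delicate since it requires comparing complex $\lambda$ to its real part. For any $\lambda\in\sigma(\A)=\sigma_p(\A)$ one has $1\in\sigma_p(Q_\lambda)$ by the characterization above, so $r(Q_\lambda)\ge 1$. Using $|e^{-\lambda a}|=e^{-\mathrm{Re}\,\lambda\, a}$ together with positivity of $b_\ell(a)\Pi_\ell(a,0)$, a pointwise modulus estimate in the integral defining $Q_\lambda$ gives the domination $|Q_\lambda \xi|\le Q_{\mathrm{Re}\,\lambda}|\xi|$ (in the lattice sense on the complexification), hence $r(Q_\lambda)\le r(Q_{\mathrm{Re}\,\lambda})$. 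Combining these with the monotonicity of $r(Q_{\cdot})$ on $\R$ yields $r(Q_{\mathrm{Re}\,\lambda})\ge 1 = r(Q_{\lambda_0})$, and thus $\mathrm{Re}\,\lambda\le \lambda_0$. Taking supremum gives $s(\A)\le \lambda_0$, and together with $s(\A)=\omega_0(\A)$ from the first step this proves $s(\A)=\omega_0(\A)=\lambda_0$ as claimed; alternatively, one may simply quote the corresponding result from \cite{WalkerIUMJ}.
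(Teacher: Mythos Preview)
Your argument is correct. The paper itself does not prove this proposition; it simply cites \cite[Corollary~4.3]{WalkerIUMJ} and moves on. What you have written is essentially a sketch of the proof contained in that reference: the eigenvalue characterization $\lambda\in\sigma_p(\A)\Leftrightarrow 1\in\sigma_p(Q_\lambda)$ via Theorem~\ref{IUMJT1}(c), the Krein--Rutman/monotonicity analysis of $\lambda\mapsto r(Q_\lambda)$ (which the paper itself invokes later in Section~\ref{Sec5} as \cite[Lemma~3.1]{WalkerIUMJ}), and the lattice domination $|Q_\lambda\xi|\le Q_{\mathrm{Re}\,\lambda}|\xi|$ to bound the real parts of all eigenvalues by $\lambda_0$. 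The last step indeed relies on $E_0$ being a Banach lattice (so that positive operators dominate moduli), which is part of the standing assumptions in Section~\ref{Sec2}. Your closing remark that one may alternatively quote \cite{WalkerIUMJ} is in fact exactly what the paper does.
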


\begin{proof}
This is \cite[Corollary~4.3]{WalkerIUMJ}.
\end{proof}

\subsection{Eventual Compactness of the Perturbed Semigroup}

If $\B\in\ml (\E_0)$, then \mbox{$\G:=\A+\B$} generates also a strongly continuous semigroup on $\E_0$, which, however, is not necessarily eventually compact even though the semigroup generated by $\A$ is. Nonetheless, we next shall prove that for particular (nonlocal) perturbations of the form
\begin{subequations}\label{Bpart0}
\begin{align}\label{Bpert0}
[\B\psi](a):=\int_0^{a_m} q(a,\sigma)\,\psi(\sigma)\,\rd \sigma\,,\quad a\in J\,,\quad \psi\in \E_0\,, 
\end{align}
with $q(a,\sigma)=q(a)(\sigma)$ satisfying
\begin{align}\label{q0}
q\in C\big(J,L_\infty(J,\ml(E_0))\big)\,,
\end{align}
 the semigroup generated by $\G=\A+\B$ is eventually compact. This yields more information on the spectrum of $\G$ and implies, in particular, that the spectral bound
$s(\G)$
and the growth bound
$\omega_0(\G)$ coincide. This we shall apply later on to the linearization of problem~\eqref{PPPP} in which the  perturbation operator $\B$ has the particular form~\eqref{Bpart0}.
Note  that $\B\in\ml(\E_0)$ with
$$
\|\B\|_{\ml(\E_0)}\le a_m\,\|q\|_{\infty}\,,\qquad \|q\|_{\infty}:=\|q\|_{C(J,L_\infty(J,\ml(E_0)))}\,.
$$
For the birth rate we impose additionally that
\begin{equation}\label{b}
b_\ell\in C\big(J,\ml(E_0)\big)\,.
\end{equation}
\end{subequations}

We then shall prove the following theorem which is fundamental for our purpose:

\begin{thm}\label{T:NormCont}
Suppose \eqref{A1l}, \eqref{A2l},~\eqref{Bpart0}, and let  $\A$ be the generator of the semigroup $(\mS(t))_{t\ge 0}$ defined in~\eqref{100}.Then the semigroup $(\T(t))_{t\ge 0}$ on $\E_0$ generated by $\G:=\A+\B$ is eventually compact. In particular,
\begin{align}\label{SO}
s(\G)=\omega_0(\G)\,
\end{align}
and the spectrum $\sigma(\G)=\sigma_p(\G)$  is countable and consists of poles of the resolvent $R(\cdot,\G)$ of finite algebraic multiplicities (in particular, $\sigma(\G)$ is a pure point spectrum). Moreover, for each $r\in\R$, the set $\{\lambda\in \sigma(\G)\,;\,\mathrm{Re}\, \lambda\ge r\}$ is finite. 

Finally,  if $A_\ell(a)$ is resolvent positive for every $a\in J$, if $b_\ell\in L_{\infty}\big(J,\ml_+(E_0)\big)$, and if $\B\in\ml_+(\E_0)$, then the semigroup~$(\T(t))_{t\ge 0}$ is positive, and if $s(\G)>-\infty$, then $s(\G)$ is an eigenvalue of $\G$.
\end{thm}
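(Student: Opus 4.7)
The plan is to exploit the integral-kernel structure of $\B$ in \eqref{Bpert0}--\eqref{q0} together with the eventual compactness of the unperturbed semigroup $(\mS(t))_{t\ge 0}$ supplied by Theorem~\ref{IUMJT1}(a). Since $\B\in\ml(\E_0)$, the operator $\G=\A+\B$ generates a strongly continuous semigroup $(\T(t))_{t\ge 0}$ admitting the variation-of-constants representation
\begin{equation*}
\T(t) \;=\; \mS(t) \;+\; \int_0^t \mS(t-s)\,\B\,\T(s)\,\rd s, \qquad t\ge 0.
\end{equation*}
My strategy is to show, via the particular structure of $\B$, that the Duhamel correction is a compact operator on $\E_0$ for every $t>0$; combined with the eventual compactness of $\mS(t)$ this yields the eventual compactness of $(\T(t))_{t\ge 0}$.

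The central technical step is to prove that $\mS(\tau)\B$ is compact on $\E_0$ for each $\tau>0$. First, the continuity assumption~\eqref{q0} on $q$ guarantees that $\B$ maps $\E_0$ continuously into $C(J,E_0)$ and that $\{\B\psi\,:\,\|\psi\|_{\E_0}\le 1\}$ is equicontinuous in $C(J,E_0)$, with modulus of continuity controlled by that of $q(\cdot,\sigma)$ uniformly in $\sigma$. Second, by the representation~\eqref{100} coupled with the Volterra equation~\eqref{500} for the boundary trace $\mathsf{B}_{\B\psi}$, every value $[\mS(\tau)\B\psi](a)\in E_0$ admits an expression through compositions in which each occurrence of the parabolic evolution operator $\Pi_\ell$ carries a strictly positive time gap---either $\Pi_\ell(a,a-\tau)$ when $a\ge\tau$, or $\Pi_\ell(a,0)\mathsf{B}_{\B\psi}(\tau-a)$ when $a<\tau$, where $\mathsf{B}_{\B\psi}(\tau-a)$ is by~\eqref{500} itself an integral against $\Pi_\ell(\cdot,0)$ and $\Pi_\ell(\cdot+\tau,\cdot)$. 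Since the compact embedding $E_1\dhr E_0$ together with $\Pi_\ell(a,\sigma)\in\ml(E_0,E_1)$ for $a>\sigma$ renders each such factor compact as an element of $\ml(E_0)$, the image of the unit ball of $\E_0$ under $\mS(\tau)\B$ is pointwise (in $a$) relatively compact in $E_0$. Combining this with equicontinuity in $a$ inherited from the regularity of $\Pi_\ell$, $b_\ell$ and $q$, the Arzel\`a-Ascoli theorem yields relative compactness in $C(J,E_0)$, and hence in $\E_0$ via the continuous embedding $C(J,E_0)\hookrightarrow\E_0$.

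With this fact in hand, the Duhamel integral is shown to be compact by truncation: for $\delta>0$ the contribution $\int_{t-\delta}^t\mS(t-s)\B\T(s)\,\rd s$ has norm bounded by $\delta\,\|\B\|_{\ml(\E_0)}\sup_{s\le t}\|\T(s)\|_{\ml(\E_0)}$ and so vanishes in operator norm as $\delta\to 0^+$, while on $[0,t-\delta]$ the factorisation $\mS(t-s)\B=\mS(t-s-\delta/2)\cdot[\mS(\delta/2)\B]$ (valid since $t-s\ge\delta$, with $\mS(\delta/2)\B$ compact) makes $s\mapsto\mS(t-s)\B\T(s)$ operator-norm continuous, so $\int_0^{t-\delta}\mS(t-s)\B\T(s)\,\rd s$ is a norm limit of Riemann sums of compact operators and therefore compact. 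Letting $\delta\to 0^+$ exhibits $\int_0^t\mS(t-s)\B\T(s)\,\rd s$ as a norm limit of compact operators, hence compact. Consequently $(\T(t))_{t\ge 0}$ is eventually compact, and the remaining spectral assertions follow from standard theory for eventually compact $C_0$-semigroups: the Riesz-Schauder decomposition of $\T(t_0)$ (for $t_0$ at which $\T(t_0)$ is compact), together with the spectral mapping theorem valid in this setting, yields $\sigma(\G)=\sigma_p(\G)$ countable and consisting of poles of $R(\cdot,\G)$ of finite algebraic multiplicity, the finiteness of $\{\lambda\in\sigma(\G)\,:\,\mathrm{Re}\,\lambda\ge r\}$ for every $r\in\R$, and the identity $s(\G)=\omega_0(\G)$. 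Under the positivity hypotheses, the Dyson-Phillips expansion $\T(t)=\sum_{n\ge 0}\mS_n(t)$ with $\mS_0(t)=\mS(t)$ and $\mS_{n+1}(t)=\int_0^t\mS(t-s)\B\mS_n(s)\,\rd s$ consists of positive summands, yielding $\T(t)\in\ml_+(\E_0)$; when $s(\G)>-\infty$, the Krein-Rutman theorem applied to the positive compact operator $\T(t_0)$ together with the spectral mapping theorem places $s(\G)$ in $\sigma_p(\G)$.

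The principal obstacle lies precisely in the compactness of $\mS(\tau)\B$: at $a=0$ the identity $\Pi_\ell(0,0)=\mathrm{Id}$ is not compact, so pointwise relative compactness in $E_0$ would fail for a generic bounded perturbation. The particular nonlocal form~\eqref{Bpert0} of $\B$ is decisive here, since the Volterra representation~\eqref{500} of $[\mS(\tau)\B\psi](0)=\mathsf{B}_{\B\psi}(\tau)$ expresses this boundary value entirely through compactifying $\Pi_\ell$-factors with strictly positive time gap, recovering the compactness that the mere boundedness of $\B$ would not supply.
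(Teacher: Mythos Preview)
Your strategy differs from the paper's, and it is essentially sound, but two points need repair.

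\textbf{Comparison.} The paper does \emph{not} show that $\mS(\tau)\B$ is compact. Instead it proves directly (Proposition~\ref{Prop:NormCont}, via Simon's compactness criterion in $L_1(J,E_0)$) that the first Dyson--Phillips term $\mathcal{V}\mS(t)=\int_0^t\mS(t-s)\B\mS(s)\,\rd s$ is compact for every $t>0$, and then invokes \cite[III.Theorem~1.16(ii)]{EngelNagel} to transfer eventual compactness from $(\mS(t))$ to $(\T(t))$. Your route---compactness of $\mS(\tau)\B$ followed by compactness of the full Duhamel correction $\int_0^t\mS(t-s)\B\T(s)\,\rd s$---is more direct and in fact implies the paper's proposition as a corollary; it is a genuinely alternative argument.

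\textbf{Gap 1: the Arzel\`a--Ascoli step.} The function $a\mapsto[\mS(\tau)\B\psi](a)$ is in general \emph{discontinuous} at $a=\tau$, since the left limit $\Pi_\ell(\tau,0)\mathsf{B}_{\B\psi}(0)=\Pi_\ell(\tau,0)\int_0^{a_m}b_\ell(a)[\B\psi](a)\,\rd a$ need not equal the right value $\Pi_\ell(\tau,0)[\B\psi](0)$. Hence $\{\mS(\tau)\B\psi:\|\psi\|_{\E_0}\le 1\}$ is not a subset of $C(J,E_0)$, and your Arzel\`a--Ascoli conclusion there is incorrect as stated. The fix is routine: treat $[\tau,a_m]$ and $[0,\tau-\kappa]$ separately (on each piece your equicontinuity and pointwise-compactness arguments go through, using \eqref{B101} and the equicontinuity of $\mathsf{B}_{\B\zeta}$ on $[\kappa,T]$ from Lemma~\ref{P51}), and observe that the $L_1$-contribution from $[\tau-\kappa,\tau]$ is $O(\kappa)$ uniformly; then let $\kappa\to 0$.

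\textbf{Gap 2: norm-continuity of the Duhamel integrand.} Your assertion that $s\mapsto \mS(t-s)\B\T(s)$ is operator-norm continuous does not follow from compactness of $\mS(\delta/2)\B$: for compact $K$, the map $s\mapsto K\T(s)$ need not be norm-continuous on a non-reflexive space such as $\E_0=L_1(J,E_0)$. What \emph{is} true---and suffices---is that $s\mapsto \mS(t-s)\B$ is norm-continuous on $[0,t-\delta]$ (write $\mS(t-s)\B=\mS(t-s-\delta/2)\,[\mS(\delta/2)\B]$ and use that a strongly continuous semigroup composed on the left with a fixed compact operator is norm-continuous). Then approximate $s\mapsto \mS(t-s)\B$ uniformly by step functions $\sum_i K_i\chi_{I_i}$ with each $K_i$ compact; the resulting approximation $\sum_i K_i\int_{I_i}\T(s)\,\rd s$ is compact, and the error is controlled by the uniform approximation and $\sup_{s\le t}\|\T(s)\|$. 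This replaces your Riemann-sum argument.

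With these two repairs your proof is complete, and the spectral and positivity conclusions follow exactly as you indicate.
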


Theorem~\ref{T:NormCont} relies on the following crucial observation:

\begin{prop}\label{Prop:NormCont}
Suppose \eqref{A1l}, \eqref{A2l}, and \eqref{Bpart0}. Set
\begin{align*} 
\mathcal{V}\mS(t)\psi:=\int_0^t\mS(t-s)\,\B\,\mS(s)\psi\,\rd s\,,\quad t>0\,,\quad \psi\in\E_0\,.
\end{align*}
Then $\mathcal{V}\mS:(0,\infty)\to \mathcal{K}(\E_0)$, i.e. $\mathcal{V}\mS(t)$ is a bounded compact operator on $\E_0$ for every $t\in (0,\infty)$.
\end{prop}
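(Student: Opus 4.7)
The approach exploits the compactness of the parabolic evolution operator $\Pi_\ell(a,\sigma):E_0\to E_0$ for $a>\sigma$, which follows from $\Pi_\ell(a,\sigma)\in\ml(E_0,E_1)$ together with the compact embedding $E_1\dhr E_0$ (cf.\ Appendix~\ref{App:EvolSys}). First I would insert the two-case representation~\eqref{100} of both $\mS(s)$ and $\mS(t-s)$ into the definition of $\mathcal{V}\mS(t)$ and apply Fubini. The resulting expression for $[\mathcal{V}\mS(t)\psi](a)$ is a finite sum of iterated integrals coming from the region decomposition---$a\ge t-s$ versus $a<t-s$, and likewise for the intermediate age variable involved in $\B$; in every summand the evolution factor $\Pi_\ell(a,\sigma)$ appears with a strictly positive age gap $a-\sigma>0$ throughout the relevant integration domain.

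I would then approximate $\mathcal{V}\mS(t)$ in operator norm by the cutoffs
\begin{equation*}
\mathcal{V}_\varepsilon\mS(t):=\int_\varepsilon^{t-\varepsilon}\mS(t-s)\,\B\,\mS(s)\,\rd s\,,\qquad 0<\varepsilon<t/2\,,
\end{equation*}
observing that $\|\mathcal{V}\mS(t)-\mathcal{V}_\varepsilon\mS(t)\|_{\ml(\E_0)}=\mathcal{O}(\varepsilon)$ by virtue of the uniform bound~\eqref{E3} on the integrand and boundedness of $\B$. Since $\mk(\E_0)$ is norm-closed in $\ml(\E_0)$, it suffices to prove $\mathcal{V}_\varepsilon\mS(t)\in\mk(\E_0)$ for each fixed $\varepsilon>0$.

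For such $\varepsilon$ and $s\in[\varepsilon,t-\varepsilon]$, both propagation times $s$ and $t-s$ are bounded below, which uniformly separates the outer $\Pi_\ell$ factors from the diagonal. Compactness of $\mS(t-s)\,\B\,\mS(s)$ for each such $s$ I would verify via the vector-valued Riesz--Fr\'echet--Kolmogorov criterion in $L_1(J,E_0)=\E_0$: pointwise-in-$a$ relative compactness of the image of the unit ball of $\E_0$ comes from the outermost $\Pi_\ell$ factoring through the compact embedding $E_1\dhr E_0$, while $L_1$-equicontinuity in $a$ follows from the continuity of $q(\cdot,\sigma)$ in the first variable---cf.~\eqref{q0}---and of $\Pi_\ell$ in its arguments away from the diagonal, together with the Lipschitz dependence $\psi\mapsto\mathsf{B}_\psi$ provided by~\eqref{BBBB}. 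I would then show that $s\mapsto \mS(t-s)\,\B\,\mS(s)$ is operator-norm continuous on $[\varepsilon,t-\varepsilon]$, so that its Bochner integral $\mathcal{V}_\varepsilon\mS(t)$ again lies in $\mk(\E_0)$ by the norm-closedness of that subspace.

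The main obstacle I anticipate is this last step: establishing operator-norm (rather than merely strong) continuity of $s\mapsto \mS(t-s)\,\B\,\mS(s)$. The age-structured semigroup $(\mS(t))_{t\ge 0}$ is not operator-norm continuous in general, so the required norm continuity must be extracted from the compact and norm-continuous $\Pi_\ell$ factors flanking $\B$, combined with the continuity of $q$ in the age variable~\eqref{q0} and the Volterra representation of $\mathsf{B}_\psi$ in~\eqref{BBBB}. I would carry this out by estimating each of the four sub-terms arising from the region decomposition separately, using the standard parabolic evolution operator bounds from Appendix~\ref{App:EvolSys}.
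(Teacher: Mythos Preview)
Your strategy differs from the paper's, which applies Simon's compactness criterion in $L_1(J,E_0)$ directly to $\mathcal{V}\mS(t)$ without ever fixing $s$: the translate estimate and the pointwise compactness are carried out with the $s$-integral already in place, and the key technical input is a preliminary lemma establishing that $\tau\mapsto \mathsf{B}_{\B\zeta}(\tau)$ is equicontinuous \emph{uniformly} over $\|\zeta\|_{\E_0}\le 1$.

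The gap in your route is precisely the step you flag. Operator-norm continuity of $s\mapsto \mS(t-s)\,\B\,\mS(s)$ on $[\varepsilon,t-\varepsilon]$ cannot be extracted from the ingredients you list. Expand the translation part of the inner factor: for $s<a_m$,
\[
[\B\mS(s)\psi](a')\ \supset\ \int_0^{a_m-s} q(a',\tau+s)\,\Pi_\ell(\tau+s,\tau)\,\psi(\tau)\,\rd\tau\,.
\]
Comparing $s$ and $s'$ produces a boundary contribution supported on $[a_m-s,\,a_m-s']$, whose $E_0$-norm for fixed $a'$ is bounded by $c\int_{a_m-s}^{a_m-s'}\|\psi(\tau)\|_{E_0}\rd\tau$. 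This is \emph{not} uniformly small over the unit ball of $\E_0=L_1(J,E_0)$: choose $\psi$ concentrated near $\tau=a_m-s$. The remaining kernel difference involves $q(a',\tau+s)-q(a',\tau+s')$, a variation in the \emph{second} argument of $q$, whereas \eqref{q0} gives continuity only in the first; so that term does not help either. Composing with the bounded operator $\mS(t-s')$ on the left does not kill this contribution, and writing $\mS(t-s)-\mS(t-s')=(\mS(s'-s)-I)\mS(t-s')$ only handles one of the two difference terms (via compactness of $\mS(t-s')\B$), not the other. Hence $s\mapsto \mS(t-s)\B\mS(s)$ is not norm-continuous under the stated hypotheses, and the passage from ``compact for each $s$'' to ``integral is compact'' breaks down. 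The paper's direct approach sidesteps this entirely because the $s$-integration is performed \emph{before} the translate estimate, so no uniformity in $s$ at the operator-norm level is ever required.
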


The proof of Proposition~\ref{Prop:NormCont} uses the explicit form of the semigroup $(\mS(t))_{t\ge 0}$ in~\eqref{100} and the particular form of the perturbation $\B$ in~\eqref{Bpert0}, but is rather technical and thus postponed to Appendix~\ref{App:NormCont}. 

\subsection*{Proof of Theorem~\ref{T:NormCont}}
Recall that $\mS(t)=e^{t\A}$ and $\T(t)=e^{t(\A+\B)}$.
Theorem~\ref{IUMJT1} ensures that the semigroup $(\mS(t))_{t\ge 0}$ is eventually compact. Therefore, since~$\B\in\ml(\E_0)$ and since \mbox{$\mathcal{V}\mS$} is compact on $(0,\infty)$
according to Proposition~\ref{Prop:NormCont}, we infer from~\cite[III.Theorem~1.16~(ii)]{EngelNagel} 
 (with $k=1$) that also  $(\T(t))_{t\ge 0}$ is eventually compact. This implies~\eqref{SO} due to~\cite[IV.Corollary~3.11]{EngelNagel} while the remaining statements regarding the spectrum of~$\G=\A+\B$ now follow from~\cite[V.Corollary~3.2]{EngelNagel}.

Finally,  if $A_\ell(a)$ is resolvent positive for every $a\in J$ and $b_\ell\in L_{\infty}\big(J,\ml_+(E_0)\big)$, then Theorem~\ref{IUMJT1} entails that $\mS(t)=e^{t\A}$ is positive. Since $\B\in\ml_+(\E_0)$, it is well-known that the semigroup~$\T(t)=e^{t(\A+\B)}$ is positive as well, e.g. see \cite[Proposition~12.11]{BFR}. Since $\E_0$ is a Banach lattice, this implies that $s(\G)$ is an eigenvalue of $\G$ if $s(\G)>-\infty$ according to \cite[Corollary~12.9]{BFR}. This yields Theorem~\ref{T:NormCont}.\qed \\

We can derive now also properties of the semigroup $(\T(t))_{t\ge 0}$ restricted to $\E_\alpha$. 

\begin{cor}\label{C:Ealpha}
Suppose \eqref{A1l}, \eqref{A2l},~\eqref{Bpart0}, and let  $\A$ be the generator of the semigroup $(\mS(t))_{t\ge 0}$ defined in \eqref{100}.  Let $(\T(t))_{t\ge 0}$ be the strongly continuous semigroup on $\E_0$ generated by $\G=\A+\B$. For every $\alpha\in [0,1)$, the restriction $\left(\T(t)\vert_{\E_\alpha}\right)_{t\ge 0}$ is a strongly continuous, eventually compact semigroup on $\E_\alpha$. Its generator $\G_\alpha$ is the $\E_\alpha$-realization of $\G$. For the corresponding (point) spectra it holds that $\sigma(\G)=\sigma(\G_\alpha)$. Moreover, for every $\omega>s(\G)$ there is $N_\alpha\ge 1$ such that
\begin{align}\label{growth}
\|\T(t)\|_{\ml(\E_0)}+\|\T(t)\|_{\ml(\E_\alpha)}+t^\alpha \|\T(t)\|_{\ml(\E_0,\E_\alpha)}\le N_\alpha e^{\omega t}\,,\quad t\ge 0\,.
\end{align}
\end{cor}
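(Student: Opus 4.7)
The plan is to bootstrap from the $\E_0$-theory of Theorem~\ref{T:NormCont} to $\E_\alpha$ by combining the variation of constants identity
\begin{equation*}
\T(t)\psi=\mS(t)\psi+\int_0^t \mS(t-s)\,\B\,\T(s)\psi\,\rd s\,,\qquad t\ge 0\,,\ \psi\in\E_0\,,
\end{equation*}
which holds because $\B\in\ml(\E_0)$ is a bounded perturbation of $\A$, with the smoothing estimate $\|\mS(t-s)\|_{\ml(\E_0,\E_\alpha)}\le M_\alpha(t-s)^{-\alpha}e^{\varkappa_\alpha(t-s)}$ and the bound $\|\mS(t)\|_{\ml(\E_\alpha)}\le M_\alpha e^{\varkappa_\alpha t}$ from Theorem~\ref{IUMJT1}(b).

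First I would feed the $\E_0$-growth $\|\T(s)\|_{\ml(\E_0)}\le Ce^{\omega_0 s}$ (with $\omega_0=s(\G)=\omega_0(\G)$ by~\eqref{SO}) into the variation of constants identity. For $\psi\in\E_\alpha$ this produces an $\E_\alpha$-bound on $\T(t)\psi$, the inhomogeneous integral being controlled since $\alpha<1$. Replacing $\|\mS(t)\|_{\ml(\E_\alpha)}$ by $\|\mS(t)\|_{\ml(\E_0,\E_\alpha)}\le M_\alpha t^{-\alpha}e^{\varkappa_\alpha t}$ in the first term gives an analogous bound for $t^\alpha\|\T(t)\|_{\ml(\E_0,\E_\alpha)}$, yielding a preliminary form of~\eqref{growth} with a possibly crude exponent. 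Strong continuity on $\E_\alpha$ is then immediate: $\mS(t)\psi\to\psi$ in $\E_\alpha$ by Theorem~\ref{IUMJT1}(b), while the perturbative integral has $\E_\alpha$-norm of order $t^{1-\alpha}$, which vanishes as $t\to 0^+$. The generator $\G_\alpha$ of the restricted semigroup is then identified as the part of $\G$ in $\E_\alpha$ by standard semigroup theory.

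To prove eventual compactness on $\E_\alpha$, I would pick $t_\ast>0$ with $\T(t_\ast)\in\mathcal{K}(\E_0)$ from Theorem~\ref{T:NormCont} and, for $\varepsilon>0$, factor $\T(t_\ast+\varepsilon)=\T(\varepsilon)\,\T(t_\ast)$. Viewed on $\E_\alpha$, the factor $\T(t_\ast):\E_\alpha\to\E_0$ is compact (continuous embedding composed with the compact operator $\T(t_\ast)$ on $\E_0$), while $\T(\varepsilon):\E_0\to\E_\alpha$ is bounded by the smoothing bound just proved; the composition is therefore compact on $\E_\alpha$. For the spectral equality, eventual compactness on both sides forces $\sigma(\G)=\sigma_p(\G)$ and $\sigma(\G_\alpha)=\sigma_p(\G_\alpha)$. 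Any eigenvector $w\ne 0$ with $\G w=\lambda w$ lies in $\dom(\G)=\dom(\A)\hookrightarrow\E_\alpha$ by Theorem~\ref{IUMJT1}(c), and then $\G w=\lambda w\in\E_\alpha$, so $w\in\dom(\G_\alpha)$ and $\lambda\in\sigma_p(\G_\alpha)$; the reverse inclusion is trivial since $\G_\alpha\subset\G$.

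Finally, the sharp bound~\eqref{growth} valid for every $\omega>s(\G)$ follows by upgrading the crude preliminary estimate: eventual compactness on $\E_\alpha$ gives $\omega_0(\T\vert_{\E_\alpha})=s(\G_\alpha)$, and the spectral equality just proved gives $s(\G_\alpha)=s(\G)$, so $\|\T(t)\|_{\ml(\E_\alpha)}\le Ne^{\omega t}$ for any $\omega>s(\G)$. A semigroup splitting $\T(t)=\T(t-1)\T(1)$ for $t>1$ transfers this sharp exponential growth onto $\|\T(t)\|_{\ml(\E_0,\E_\alpha)}$, while the short-time regime $0<t\le 1$ is already controlled by the variation of constants bound. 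I expect the main obstacle to be exactly this upgrade, since the raw variation of constants bound only yields the exponent $\omega_0(\G)\vee\varkappa_\alpha$, and one must genuinely pass through the spectral identification to sharpen it to an arbitrary $\omega>s(\G)$.
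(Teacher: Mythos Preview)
Your proposal is correct and follows essentially the same route as the paper's proof: variation of constants combined with the smoothing estimate~\eqref{E3} for the preliminary $\E_\alpha$-bounds and strong continuity; the factorization $\T(t)=\T(t-t_0)\T(t_0)$ through $\E_0$ for eventual compactness; the embedding $\dom(\A)\hookrightarrow\E_\alpha$ from Theorem~\ref{IUMJT1}(c) for the spectral equality; and finally the identification $\omega_0(\G_\alpha)=s(\G_\alpha)=s(\G)$ together with the splitting $\T(t)=\T(t-1)\T(1)$ to upgrade the exponent. The only minor deviations are that the paper obtains the preliminary $\E_\alpha$-bound via Gronwall on $\|\T(t)\phi\|_{\E_\alpha}$ rather than by feeding in the known $\E_0$-growth, and that it spells out the identification of $\G_\alpha$ with the $\E_\alpha$-realization of $\G$ explicitly (your appeal to ``standard semigroup theory'' is justified, but the paper verifies both inclusions directly using that $\rho(\G)\cap\rho(\G_\alpha)\neq\emptyset$).
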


\begin{proof}
{\bf (i)} It was shown in \cite[Theorem~1.2]{WalkerIUMJ}  that the semigroup $(\T(t))_{t\ge 0}$ on $\E_0$ generated by $\G=\A+\B$ is given by
\begin{equation}\label{E33T}
\T(t)\phi=\mS(t)\phi+\int_0^t\mS(t-s)\,\B\, \T(s)\,\phi\,\rd s\,,\quad t\ge 0\,,\quad \phi\in\E_0\,,
\end{equation}
and that there are $C_\alpha\ge 1$ and $\varsigma_\alpha\ge 0$ such that
\begin{equation}\label{E3T}
\|\T(t)\|_{\ml(\E_0,\E_\alpha)}\le C_\alpha\, e^{\varsigma_\alpha t} t^{-\alpha}\,,\quad t> 0\,.
\end{equation}
We then infer from~\eqref{E3} and~\eqref{E33T} that
$$
\|\T(t)\phi\|_{\E_\alpha}\le M_\alpha e^{\varkappa_\alpha t} \|\phi\|_{\E_\alpha}+M_\alpha\|\B\|_{\ml(\E_\alpha,\E_0)}\int_0^t e^{\varkappa_\alpha (t-s)}\, (t-s)^{-\alpha}\, \|\T(s)\phi\|_{\E_\alpha}\,\rd s\,,\quad t\ge 0\,, 
$$
for $\phi\in \E_\alpha$ so that Gronwall's inequality implies 
\begin{equation}\label{xd}
\|\T(t)\|_{\ml(\E_\alpha)}\le c_\alpha e^{\omega_1 t}\,,\quad t\ge 0\,,
\end{equation}
for some $c_\alpha\ge 1$ and $\omega_1>0$.
Moreover, since
$$
\|\T(t)\phi-\phi\|_{\E_\alpha}\le\|\mS(t)\phi-\phi\|_{\E_\alpha}+\int_0^t\|\mS(t-s)\|_{\ml(\E_0,\E_\alpha)}\,\|\B\|_{\ml(\E_\alpha,\E_0)}\, \|\T(s)\phi\|_{\E_\alpha}\,\rd s
$$
for $t\ge 0$ and  $\phi\in \E_\alpha$, the strong continuity of $\left(\T(t)\vert_{\E_\alpha}\right)_{t\ge 0}$ on $\E_\alpha$ follows from the  strong continuity of $(\mS(t)\vert_{\E_\alpha})_{t\ge 0}$ on $\E_\alpha$ guaranteed by Theorem~\ref{IUMJT1}  and from~\eqref{E3} and~\eqref{xd}. Therefore, $\left(\T(t)\vert_{\E_\alpha}\right)_{t\ge 0}$ is a strongly continuous semigroup on $\E_\alpha$.  Writing $$\T(t)\vert_{\E_\alpha}=\T(t-t_0)\T(t_0)\vert_{\E_\alpha}$$ and noticing that $\T(t_0)\in \ml(\E_0)$ is compact for $t_0$ large due to Theorem~\ref{T:NormCont} while \mbox{$\T(t-t_0)\in \ml(\E_0,\E_\alpha)$} by \eqref{E3T} for $t>t_0$, we deduce that $\left(\T(t)\vert_{\E_\alpha}\right)_{t\ge 0}$ is eventually compact on $\E_\alpha$.\\ 

{\bf (ii)} Denote by $\G_\alpha$  the generator of the restricted semigroup to $\E_\alpha$ so that $\T(t)\vert_{\E_\alpha}=e^{t\G_\alpha}$ for $t\ge 0$. We prove that $\G_\alpha$ is the $\E_\alpha$-realization of $\G$. To this end, denote the latter by $\G_{\E_\alpha}$ and let $\zeta\in\mathrm{dom} (\G_\alpha)$. Then
$$
\frac{1}{t}\big(\T(t)\zeta-\zeta\big)=\frac{1}{t}\left(e^{t\G_\alpha}\zeta-\zeta\right)\rightarrow \G_\alpha\zeta \ \ \text{ in }\ \E_\alpha\hookrightarrow \E_0\ \text{ as } \ t\to 0\,,
$$
hence $\zeta\in\mathrm{dom} (\G)$ and 
\begin{align}\label{EE}
\G\zeta=\G_\alpha\zeta\in \E_\alpha\,.
\end{align} 
Consequently, $\zeta\in\mathrm{dom} (\G_{\E_\alpha})$ and $\G_{\E_\alpha}\zeta=\G_\alpha\zeta$.  

Conversely, let $\psi\in\mathrm{dom} (\G_{\E_\alpha})$ and $\lambda\in \rho(\G)\cap \rho(\G_\alpha)$ (this is possible since both $\G$ and $\G_\alpha$ are semigroup generators). Then 
$$
\psi\in  \mathrm{dom} (\G)=\mathrm{dom} (\A)\subset \E_\alpha
$$ 
by Theorem~\ref{IUMJT1} and $\G\psi\in \E_\alpha$. Thus, since $(\lambda-\G)\psi\in \E_\alpha$ and since $\lambda\in \rho(\G_\alpha)$, there is a unique $\zeta\in \mathrm{dom} (\G_{\alpha})$ such that $(\lambda-\G_\alpha)\zeta=(\lambda-\G)\psi$. Since $(\lambda-\G_\alpha)\zeta=(\lambda-\G)\zeta$ by \eqref{EE} and since $\lambda\in \rho(\G)$, we conclude $\psi=\zeta\in \mathrm{dom} (\G_\alpha)$. Therefore, $\G_\alpha=\G_{\E_\alpha}$; that is,~$\G_\alpha$ coincides with the $\E_\alpha$-realization of $\G$.\\

{\bf (iii)} We claim that $\sigma(\G)=\sigma(\G_\alpha)$, where we recall that both $\sigma(\G)=\sigma_p(\G)$ and $\sigma(\G_\alpha)=\sigma_p(\G_\alpha)$ are point spectra since both $\G$ and $\G_\alpha$ generate eventually compact semigroups by Theorem~\ref{T:NormCont} respectively {\bf (i)}. Let $\lambda\in \sigma(\G)$. Then  there is $\psi\in \mathrm{dom} (\G)\setminus\{0\}$ with 
$$
\G\psi=\lambda\psi\in \mathrm{dom} (\G)=\mathrm{dom} (\A)\subset \E_\alpha
$$
by Theorem~\ref{IUMJT1}. From {\bf (ii)} we deduce that $\psi\in  \mathrm{dom} (\G_\alpha)$ with $\G_\alpha\psi=\lambda\psi$. Therefore, $\lambda\in \sigma(\G_\alpha)$. 

Conversely, let  $\mu\in \sigma(\G_\alpha)$. Then there is $\zeta\in \mathrm{dom} (\G_\alpha)\setminus\{0\}$ with 
$\G_\alpha\zeta=\mu\zeta$. Since $\G_\alpha\zeta=\G\zeta$, we conclude that $\mu\in\sigma(\G)$.\\

{\bf (iv)} Consider now $\omega>s(\G)$. Since $\G$ and $\G_\alpha$ both generate eventually compact semigroups, it follows from {\bf (iii)} and \cite[IV.~Corollary~3.11]{EngelNagel} that
$$
\omega_0(\G)=s(\G)=s(\G_\alpha)=\omega_0(\G_\alpha)\,.
$$
Thus, for $\omega-\ve>s(\G)$ with $\ve>0$ there is $c_\alpha\ge 1$ such that
\begin{align}\label{growth1}
\|\T(t)\|_{\ml(\E_0)}+\|\T(t)\|_{\ml(\E_\alpha)}\le c_\alpha e^{(\omega-\ve) t}\,,\quad t\ge 0\,.
\end{align}
On the one hand, it follows from \eqref{E3T} and \eqref{growth1} for $t> 1$ that 
\begin{align*}
\|\T(t)\|_{\ml(\E_0,\E_\alpha)}&\le  \|\T(1)\|_{\ml(\E_0,\E_\alpha)}\, \|\T(t-1)\|_{\ml(\E_0)}\,\le  C_\alpha\, e^{\varsigma_\alpha}\, c_\alpha\, e^{(\omega-\ve) (t-1)}\\
&\le 
c_\alpha \,C_\alpha\,e^{-\omega+\ve}\, \left(\sup_{s>0}s^\alpha e^{-\ve s}\right)\, t^{-\alpha}\, e^{\omega t}\,.
\end{align*}
On the other hand, due to \eqref{E3T} we have, for $0<t\le 1$,
\begin{align*}
\|\T(t)\|_{\ml(\E_0,\E_\alpha)}&\le   C_\alpha\, e^{\varsigma_\alpha} t^{-\alpha}\le 
 C_\alpha\,e^{\varsigma_\alpha}\, e^{\vert\omega\vert}\, t^{-\alpha}\, e^{\omega t}\,.
\end{align*}
Consequently, there is $N_\alpha\ge 1$ such that
$$
\|\T(t)\|_{\ml(\E_0,\E_\alpha)}\le   N_\alpha
 \, t^{-\alpha}\, e^{\omega t}\,,\quad t>0\,,
$$
and the assertion follows.
\end{proof}

\begin{rem}
One can show that  $\A+\B$ has compact resolvent for any $\B\in\ml(\E_0)$ (not necessarily satisfying~\eqref{Bpart0}). We refer to a forthcoming paper~\cite{Walker2023}.
\end{rem}

\section{Linearized Stability for the Nonlinear Problem: Proof of Theorem~\ref{MainT}}\label{Sec4}

The development of this section is based upon the treatment of linearized stability in~\cite{WalkerZehetbauerJDE}, which, in turn, is based on the treatment of the case without spatial diffusion in~\cite{Pruess83}. In fact, we follow the exquisite exposition in~\cite[Section 4.5]{WebbBook} of this case.\\ 

For the reminder of this section, let $\phi\in\E_1\cap C(J,E_\alpha)$ be a fixed equilibrium to the nonlinear problem~\eqref{PPPP} and assume \eqref{A1}, \eqref{A11}, and \eqref{B}. As pointed out in Section~\ref{Sec2} we shall derive statements on the stability or instability of $\phi$ from information on the (formally) linearized problem~\eqref{EPx}, that is, from information on
\begin{align*} 
	\partial_t v+ \partial_av \, &=     A(a)v -m\big(\bar \phi,a\big) v -\partial m\big(\bar \phi,a\big)[\bar v(t)]\phi(a)\,, \qquad t>0\, ,\quad a\in (0,a_m)\, ,\\ 
	v(t,0)&=\int_0^{a_m}b\big(\bar \phi,a\big)\, v(t,a)\, \rd a +\int_0^{a_m}\partial b\big(\bar \phi,a\big)[\bar v(t)]\, \phi(a)\, \rd a \,, \qquad t>0\, , \label{Plinear2} \\
	v(0,a)&=  v_0(a)\,, \qquad a\in (0,a_m)\,.
\end{align*}
In the following, we demonstrate that this linear problem fits into the framework of Section~\ref{Sec3}. More precisely, the solution $v$ is given by a semigroup $(\T_\phi(t))_{t\ge 0}$ generated by an (unbounded) operator of the form $\A_\phi+\B_\phi$  as in Theorem~\ref{T:NormCont} with perturbation $\B_\phi w=-\partial m(\bar\phi,a)[\bar w]\phi(a)$. Moreover, if $u(\cdot;u_0)$ is the solution to the nonlinear problem~\eqref{PPPP} provided by Proposition~\ref{T1JDE}, then the difference $u(\cdot;u_0)-\phi$ can be represented in terms of this semigroup $(\T_\phi(t))_{t\ge 0}$, see Proposition~\ref{representation} below. This will be the key for our stability and instability results stated in Theorem~\ref{MainT} (see also Theorem~\ref{TInStable} below).\\

We focus our attention on the linearization~\eqref{EPx}.
Regarding the linearized age boundary conditions~\eqref{e12b} we point out that
\begin{equation*}%
\int_0^{a_m}  b(\bar \phi,a)\,\zeta(a)\,\rd a +\int_0^{a_m}  \partial b(\bar \phi,a)[\bar \zeta]\phi(a)\,\rd a=\int_0^{a_m} b_\phi(a)\zeta(a)\,\rd a\,,\quad \zeta\in\E_0\,,
		\end{equation*}
where we set
\begin{subequations}\label{ell}
\begin{equation}\label{bell}
b_\phi(a)v:= b(\bar \phi,a) v+\int_0^{a_m}  \partial b(\bar \phi,\sigma)[\varrho(a) v]\, \phi(\sigma)\,\rd \sigma\,,\quad a\in J\,,\quad v\in E_0\,,
	\end{equation}
so that 
$$
b_\phi\in C\big(J,\ml(E_0)\big)\cap L_{\infty}\big(J,\ml(E_\theta)\big)\,,\quad \theta\in\{0,\alpha,\vartheta\}\,,
$$ according to \eqref{B3}, \eqref{B2b}, and \eqref{A1d}.
We also introduce
\begin{equation}\label{Aell}
A_\phi(a)v:=A(a)v-m(\bar\phi,a)v\,,\quad v\in E_1\,,\quad a\in J\,,
	\end{equation}
and infer from \eqref{A1}, \eqref{B2bb}, and \cite[I.Theorem 1.3.1]{LQPP} that 
$$
A_\phi\in C^\rho\big(J,\mathcal{H}(E_1,E_0)\big)\,.
$$
Therefore, $A_\phi$ and $b_\phi$ satisfy \eqref{A1l}, \eqref{A2l}, and \eqref{b}.  Moreover, we define $\B_\phi\in\ml(\E_0)$ by
\begin{equation}\label{BBell}
[\B_\phi \zeta](a):=-\partial m(\bar\phi,a)[\bar \zeta]\phi(a)\,,\quad a\in J\,,\quad \zeta\in \E_0\,,
	\end{equation}
and infer from~\eqref{B1}, \eqref{B2bbb}, and \eqref{A1d} that
\begin{equation}\label{qqq}
[\B_\phi \zeta](a)=-\partial m(\bar\phi,a)\left[\int_0^{a_m}\varrho(\sigma) \zeta(\sigma)\,\rd \sigma\right]\phi(a)=\int_0^{a_m} q(a,\sigma)\zeta(\sigma)\,\rd \sigma\,,\quad a\in J\,,
	\end{equation}
for $\zeta\in \E_0$, where $q\in C\big(J,C(J,\ml(E_0))\big)$ is given by
$$
q(a,\sigma)v:=-\partial m(\bar\phi,a)[\varrho(\sigma)v]\phi(a)\,,\quad a,\sigma\in J\,,\quad v\in E_0\,.
$$
Hence, \eqref{Bpert0} and \eqref{q0} also hold, and we are in a position to  apply the results from the previous section with $A_\ell$ and $b_\ell$ replaced by $A_\phi$ respectively $b_\phi$. Throughout the reminder  of this section we thus assume (see Theorem~\ref{IUMJT1})  that
\begin{equation}\label{assump}
\begin{split}
&\textit{$\A_\phi$ is the generator of the semigroup associated with~\eqref{PP} and the data}\\
&\textit{$A_\phi$ and $b_\phi$ introduced in~\eqref{bell}-\eqref{Aell}. Moreover, $(\T_\phi(t))_{t\ge 0}$ is the strongly} \\
&\textit{continuous semigroup on~$\E_0$ generated by $\G_\phi:=\A_\phi+\B_\phi$}.
\end{split}
\end{equation} 
\end{subequations}
Theorem~\ref{T:NormCont} implies that ~$(\T_\phi(t))_{t\ge 0}$  is  eventually compact and the spectrum  consists of eigenvalues only, i.e. \mbox{$\sigma(\G_\phi)=\sigma_p(\G_\phi)$}. Moreover, the characterization of $\A_\phi$ (and thus of~$\G_\phi$) in Theorem~\ref{IUMJT1}~{\bf (c)} and \eqref{ell} imply that the eigenvalue problem for $\G_\phi$ corresponds exactly to~\eqref{eigenvalueproblem}.\vspace{1mm}


We shall then prove the following reformulation of Theorem~\ref{MainT} regarding the stability of equilibria to the nonlinear problem~\eqref{PPPP}:   \vspace{1mm}

\begin{thm}~\label{TInStable}
Let $\alpha\in [0,1)$ and let $\phi\in\E_1 \cap C(J,E_\alpha)$ be an equilibrium   to~\eqref{PPPP}. Assume \eqref{A1}, \eqref{A11}, \eqref{B}, and use the notation~\eqref{ell}. The following hold:
\begin{itemize}
\item[\bf (a)] If $\mathrm{Re}\,\lambda < 0$ for any $\lambda\in \sigma_p(\G_\phi)$,   then $\phi$  is exponentially asymptotically stable in~$\E_\alpha$.

\item[\bf (b)] If $\mathrm{Re}\,\lambda > 0$ for some $\lambda\in \sigma_p(\G_\phi)$,   then $\phi$  is unstable in $\E_\alpha$.
\end{itemize}
\end{thm}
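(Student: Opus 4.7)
The plan is to deduce Theorem~\ref{TInStable} from the spectral theory developed in Section~\ref{Sec3} via the variation-of-constants formula relating the nonlinear flow $u(\cdot;u_0)$ to the linear semigroup $(\T_\phi(t))_{t\ge 0}$ generated by $\G_\phi=\A_\phi+\B_\phi$. By Theorem~\ref{T:NormCont} this semigroup is eventually compact, so $s(\G_\phi)=\omega_0(\G_\phi)$ is determined by the real parts of the eigenvalues, and by Corollary~\ref{C:Ealpha} the analogous statement holds on $\E_\alpha$ together with the growth bound $\|\T_\phi(t)\|_{\ml(\E_\alpha)}+t^\alpha\|\T_\phi(t)\|_{\ml(\E_0,\E_\alpha)}\le N_\alpha e^{\omega t}$ for every $\omega>s(\G_\phi)$. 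The starting point is the representation of the difference $w:=u(\cdot;u_0)-\phi$ derived in \cite{WalkerZehetbauerJDE} (equation~(20) in the excerpt), which, after subtracting the equilibrium equation~\eqref{EP} from~\eqref{PPPP} and invoking the expansions~\eqref{14Ga}--\eqref{14Ea}, takes the schematic form
\begin{equation*}
w(t)=\T_\phi(t)(u_0-\phi)+\int_0^t\T_\phi(t-s)\,\mathcal{R}[w(s)]\,\rd s,
\end{equation*}
where $\mathcal{R}$ gathers the age-boundary and distributed remainders $R_m,R_b$, and satisfies $\|\mathcal{R}[v]\|_{\E_0}\le d_o(r)\|v\|_{\E_\alpha}$ for $\|v\|_{\E_\alpha}\le r$ by~\eqref{14G2}.

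For part~\textbf{(a)}, the hypothesis forces $s(\G_\phi)<0$, so there exist $r>0$ and $N_\alpha\ge 1$ with $\|\T_\phi(t)\|_{\ml(\E_\alpha)}\le N_\alpha e^{-rt}$ and $\|\T_\phi(t)\|_{\ml(\E_0,\E_\alpha)}\le N_\alpha t^{-\alpha}e^{-rt}$. Plugging these bounds into the variation-of-constants formula and exploiting that $d_o(r)\to 0$ as $r\to 0$, a standard singular Gronwall inequality shows that, provided $\|u_0-\phi\|_{\E_\alpha}\le\delta$ is small enough, one has $\|w(t)\|_{\E_\alpha}\le M\|u_0-\phi\|_{\E_\alpha}e^{-rt/2}$ on any time interval of existence. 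Combined with the blow-up criterion in Proposition~\ref{T1JDE}, this extends the solution globally and yields the claimed exponential asymptotic stability.

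Part~\textbf{(b)} follows the instability scheme of \cite[Theorem~4.13]{WebbBook}. Fix $\lambda\in\sigma_p(\G_\phi)$ with $\mathrm{Re}\,\lambda>0$ and choose $\rho\in(0,\mathrm{Re}\,\lambda)$ such that $\sigma(\G_\phi)\cap\{\mathrm{Re}\,\mu=\rho\}=\emptyset$. By Theorem~\ref{T:NormCont} the set $\sigma(\G_\phi)\cap\{\mathrm{Re}\,\mu>\rho\}$ consists of finitely many poles of finite algebraic multiplicity, so the associated Riesz spectral projection $P$ yields a $\G_\phi$-invariant splitting $\E_\alpha=\E_\alpha^u\oplus\E_\alpha^s$ into a finite-dimensional unstable part (contained in $D(\A_\phi)\hookrightarrow\E_\alpha$, by Theorem~\ref{IUMJT1} and Corollary~\ref{C:Ealpha}) on which $\G_\phi$ restricts to a bounded operator generating a uniformly continuous group with $\|\T_\phi(-t)P\|_{\ml(\E_\alpha)}\le Ce^{-\rho t}$ for $t\ge 0$, and a complementary stable part on which the semigroup decays like $e^{(\rho-\epsilon)t}$ for some $\epsilon>0$. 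Arguing by contradiction, suppose $\phi$ were stable. The key step is the analogue of Lemma~\ref{backwards} announced in the introduction: for each small $\eta>0$ and each unit vector $w_0\in\E_\alpha^u$, there exists a backwards-in-time mild solution $u_\eta\in C((-\infty,0],\E_\alpha)$ of~\eqref{PPPP} with $Pu_\eta(0)=\eta w_0$ and $\|u_\eta(t)-\phi\|_{\E_\alpha}\le C\eta e^{\rho t/2}$ for $t\le 0$. Running $u_\eta$ forward from $u_\eta(-\tau)$ and using uniqueness from Proposition~\ref{T1JDE}, the group lower bound on $\E_\alpha^u$ forces $\|u(\tau;u_\eta(-\tau))-\phi\|_{\E_\alpha}\ge c\eta$ at time $0$, while $\|u_\eta(-\tau)-\phi\|_{\E_\alpha}\le C\eta e^{-\rho\tau/2}$ can be made arbitrarily small by taking $\tau$ large, contradicting stability.

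The main obstacle is the construction of the backwards solution. Because $(\T_\phi(t))_{t\ge 0}$ carries the parabolic singular bound $t^{-\alpha}$ from Corollary~\ref{C:Ealpha}, the backward Cauchy problem for~\eqref{PPPP} is ill-posed on $\E_\alpha$ directly; instead, one sets up a fixed-point problem on the weighted space of functions $v:(-\infty,0]\to\E_\alpha$ with $\sup_{t\le 0}e^{-\rho t/2}\|v(t)\|_{\E_\alpha}<\infty$. The finite-dimensional unstable part is driven by the invertible group on $\E_\alpha^u$ with prescribed value $\eta w_0$ at $t=0$, while the stable component is reconstructed from its vanishing at $-\infty$ by integrating $\T_\phi(t-s)(I-P)$ against the remainder. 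The Lipschitz estimates~\eqref{14G2}--\eqref{do2} together with the singular-kernel bounds from Corollary~\ref{C:Ealpha} yield a contraction on a small ball once $\eta$ is chosen small enough; this is where the eventual compactness and the sharp parabolic bounds established in Section~\ref{Sec3}, as well as the specific nonlocal structure of $\B_\phi$ exploited in~\eqref{qqq}, pay off.
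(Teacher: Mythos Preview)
Your overall strategy matches the paper's: part~\textbf{(a)} reduces to the growth bound via Corollary~\ref{C:Ealpha} and then invokes \cite[Theorem~2.2]{WalkerZehetbauerJDE}, while part~\textbf{(b)} follows the Webb-type instability scheme through a spectral projection (Lemma~\ref{Proj} in the paper) and a backward fixed-point construction on an exponentially weighted space (Lemma~\ref{backwards}). The paper does not argue by contradiction but directly produces a sequence $u_0^k\to\phi$ with $\|u(k;u_0^k)-\phi\|_{\E_\alpha}$ bounded below; this is equivalent to your formulation.

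There is, however, a genuine gap in your handling of the nonlinear remainder. The birth remainder $R_b$ does not enter the evolution equation as a distributed source but through the age-boundary condition~\eqref{e12b}, so it \emph{cannot} be lumped together with $R_m$ into a single integrand $\mathcal{R}[w(s)]$ of a standard Duhamel formula. The actual representation (Proposition~\ref{representation}, formula~\eqref{20}) reads
\[
w(t)=\T_\phi(t)w_0+\int_0^t\T_\phi(t-s)\Big((\gamma+\B_\phi)W_{0,0}^{\gamma,h_w}(s,\cdot)+R_m(w(s))\Big)\,\rd s+W_{0,0}^{\gamma,h_w}(t,\cdot),
\]
where $h_w(t)=\int_0^{a_m}R_b(w(t))(a)\,\rd a$ enters only through the auxiliary function $W_{0,0}^{\gamma,h_w}$ of~\eqref{49998}, controlled by Lemma~\ref{LemmaF}. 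This extra term appears both outside and (via $(\gamma+\B_\phi)W_{0,0}^{\gamma,h_w}$) inside the convolution integral, and is precisely what makes the Gronwall closure in~\textbf{(a)} and the backward contraction map in~\textbf{(b)} nontrivial. Your fixed-point scheme for the backward solution must carry these $W_{0,0}$ pieces explicitly; without them the map is not well defined and the estimates do not close. You cite~\eqref{20} but your ``schematic form'' suppresses exactly the mechanism that does the work, and your contraction argument at the end makes no mention of Lemma~\ref{LemmaF}. Finally, the reference to~\eqref{qqq} in your last sentence is misplaced: that structure is used only to prove eventual compactness in Theorem~\ref{T:NormCont}, not in the instability argument itself.
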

 

\subsection*{Proof of Theorem~\ref{TInStable}~(a): Stability}
Assumptions \eqref{A1}, \eqref{A11}, \eqref{B}  ensure that we are in a position to apply \cite[Theorem~2.2]{WalkerZehetbauerJDE}, where it was shown that the equilibrium $\phi$ is exponentially asymptotically stable in $\E_\alpha$ provided that there is $\omega_\alpha(\phi)>0$ such that
\begin{align}\label{c22}
\|\T_\phi(t)\|_{\ml(\E_\alpha)}+t^\alpha \|\T_\phi(t)\|_{\ml(\E_0,\E_\alpha)}\le Me^{-\omega_\alpha(\phi) t}\,,\quad t> 0\,.
\end{align}
Now, the supposition $\mathrm{Re}\,\lambda < 0$ for any $\lambda\in \sigma_p(\G_\phi)$ ensures a negative spectral bound $s(\G_\phi)<0$ so that Corollary~\ref{C:Ealpha} implies \eqref{c22}.
This proves Theorem~\ref{TInStable}~{\bf(a)}.\qed

\subsection*{Preparation of the Proof of  Theorem~\ref{TInStable}~(b): Instability}

The proof of the instability result requires some preliminaries.
First of all, we infer from the supposition of Theorem~\ref{TInStable}~{\bf (b)} and due to Theorem~\ref{T:NormCont}, that the set
$$
\Sigma_+:=\sigma(\G_\phi)\cap [\mathrm{Re}\,\lambda>0]
$$
is nonempty and finite; that is, $\Sigma_+$ is a bounded spectral set. Let 
\begin{equation}\label{defomega}
0<\omega<\inf (\mathrm{Re}\,\Sigma_+)\,.
\end{equation}
This yields the following spectral decomposition:

\begin{lem}\label{Proj}
Assume~\eqref{defomega}. There is a projection $P\in\ml(\E_0)$ yielding a decomposition
\begin{subequations}\label{proj}
\begin{equation}\label{pr1}
\E_0=\E_0^1\oplus \E_0^2\,,\qquad \{0\}\not=\E_0^1:=P(\E_0)\subset D(\G_\phi)\subset\E_\alpha\,,\qquad \E_0^2:=(1-P)(\E_0)\,,
\end{equation}
such that $\G_\phi\vert_{\E_0^1}\in \ml(\E_0^1)$. Moreover, there are $M\ge 1$ and $\delta>0$ such that
\begin{equation}\label{1499}
\|\T_\phi(t)P\|_{\ml(\E_0,\E_\alpha)}\le Me^{(\omega+\delta)t}\,,\quad t\le 0\,,
\end{equation}
and
\begin{equation}\label{14999}
\|\T_\phi(t)(1-P)\|_{\ml(\E_0,\E_\alpha)}\le M\, t^{-\alpha}\,e^{(\omega-\delta)t}\,,\quad t> 0\,.
\end{equation}
\end{subequations}
\end{lem}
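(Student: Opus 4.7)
The natural tool is the Riesz spectral projection associated with the bounded spectral set $\Sigma_+$. By Theorem~\ref{T:NormCont}, $\Sigma_+$ is finite and each of its points is a pole of $R(\cdot,\G_\phi)$ of finite algebraic multiplicity; moreover $\Sigma_+$ is separated from $\sigma(\G_\phi)\setminus\Sigma_+\subset[\mathrm{Re}\,\lambda\le 0]$ by the vertical line $\{\mathrm{Re}\,\lambda=\omega\}$. I would therefore define
\[
P:=\frac{1}{2\pi i}\int_\gamma R(\lambda,\G_\phi)\,\rd\lambda\,\in\,\ml(\E_0)\,,
\]
where $\gamma\subset\{\mathrm{Re}\,\lambda>\omega\}$ is a positively oriented simple closed contour enclosing $\Sigma_+$. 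Standard spectral calculus then yields $P^2=P$, the $\G_\phi$- and $\T_\phi(t)$-invariant decomposition $\E_0=\E_0^1\oplus\E_0^2$ with $\E_0^1=P(\E_0)$, $\E_0^2=(1-P)(\E_0)$, and the spectral splitting $\sigma(\G_\phi\vert_{\E_0^1})=\Sigma_+$, $\sigma(\G_\phi\vert_{\E_0^2})=\sigma(\G_\phi)\setminus\Sigma_+$.

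Because the points of $\Sigma_+$ are poles of finite algebraic multiplicity, $\E_0^1$ is a finite-dimensional direct sum of the corresponding generalized eigenspaces, so $\E_0^1\subset\dom(\G_\phi)$ and $\G_\phi\vert_{\E_0^1}\in\ml(\E_0^1)$, while $\E_0^1\neq\{0\}$ since $\Sigma_+\neq\emptyset$. The continuous embedding $\dom(\G_\phi)=\dom(\A_\phi)\hookrightarrow\E_\alpha$ granted by Theorem~\ref{IUMJT1}~{\bf (c)} then places $\E_0^1$ inside $\E_\alpha$, and on the finite-dimensional space $\E_0^1$ the $\E_0$- and $\E_\alpha$-norms are equivalent.

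The two growth bounds are obtained after fixing $\delta\in(0,\omega)$ with $\omega+\delta<\inf\mathrm{Re}\,\Sigma_+$. For~\eqref{1499}, since $\G_\phi\vert_{\E_0^1}$ is bounded on a finite-dimensional space with spectrum in $\{\mathrm{Re}\,\lambda\ge\omega+\delta\}$, the uniformly continuous group $(e^{t\G_\phi\vert_{\E_0^1}})_{t\in\R}$ extending the restricted semigroup satisfies the classical matrix-exponential bound
\[
\|e^{t\G_\phi\vert_{\E_0^1}}\|_{\ml(\E_0^1)}\le M_1\, e^{(\omega+\delta)t}\,,\quad t\le 0\,,
\]
and composing with $P\in\ml(\E_0,\E_0^1)$ together with the norm equivalence on $\E_0^1$ yields~\eqref{1499} (with $\T_\phi(t)P$ for $t\le 0$ interpreted via this group extension). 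For~\eqref{14999}, the restriction of $(\T_\phi(t))_{t\ge 0}$ to $\E_0^2$ is a strongly continuous, eventually compact semigroup whose generator has spectral bound $s(\G_\phi\vert_{\E_0^2})\le 0<\omega-\delta$. Re-running the argument of Corollary~\ref{C:Ealpha} on this restricted semigroup --- concretely, splitting $\T_\phi(t)(1-P)=\T_\phi(1)\,\T_\phi(t-1)(1-P)$ for $t\ge 1$ and combining the smoothing bound~\eqref{E3T} for $\T_\phi(1)\in\ml(\E_0,\E_\alpha)$ with the exponential decay $\|\T_\phi(t-1)(1-P)\|_{\ml(\E_0)}\le M_2 e^{(\omega-\delta)(t-1)}$ issuing from the spectral bound --- gives the claimed $\E_\alpha$-estimate for $t\ge 1$; for $0<t\le 1$ the inequality~\eqref{E3T} is already sufficient.

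The step that I expect to require the most care is this last one: transferring the spectral information, which sits naturally in the $\E_0$-setting, into the regularizing $t^{-\alpha}$ bound in the smaller space $\E_\alpha$, given the hyperbolic-parabolic character of the linearization semigroup. This is handled exactly as in the proof of Corollary~\ref{C:Ealpha}, by coupling eventual compactness of $(\T_\phi(t))_{t\ge 0}$ with the short-time smoothing estimate~\eqref{E3T}.
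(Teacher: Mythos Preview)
Your proposal is correct and follows essentially the same route as the paper: Riesz projection for the bounded spectral set $\Sigma_+$, then separate estimates on the two components using eventual compactness and the smoothing bound~\eqref{E3T}. The only notable variation is in how you reach the $\E_\alpha$-estimate~\eqref{1499}: you exploit the finite-dimensionality of $\E_0^1$ (coming from the finite algebraic multiplicities in Theorem~\ref{T:NormCont}) to invoke norm equivalence directly, whereas the paper instead bounds both $\|e^{t\G_\phi^1}P\psi\|_{\E_0}$ and $\|\G_\phi e^{t\G_\phi^1}P\psi\|_{\E_0}$ via the contour integral representation and then uses the embedding $D(\G_\phi)\hookrightarrow\E_\alpha$. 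Both arguments are valid here; yours is a touch shorter given the available spectral information, while the paper's version does not rely on finite-dimensionality and would survive in settings where $\Sigma_+$ is merely compact.
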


\begin{proof}
Since $\Sigma_+$ is a bounded spectral set, it follows from \cite[Proposition A.1.2]{LunardiBook} (or \cite[Proposition~4.15]{WebbBook}) that there is a projection $P\in\ml(\E_0)$ such that \eqref{pr1} holds (noticing that $\mathrm{dom}(\G_\phi)=\mathrm{dom}(\A_\phi)\subset\E_\alpha$ by Theorem~\ref{IUMJT1}, see also \cite[Corollary~3.4]{WalkerIUMJ}) and such that $$\G_\phi^1:=\G_\phi\vert_{\E_0^1}\in \ml(\E_0^1)\,.$$
Moreover,  
$$
\G_\phi^2: \mathrm{dom}(\G_\phi)\cap \E_0^2\to \E_0^2\,,\quad \psi\mapsto \G_\phi\psi 
$$
with
\begin{subequations}\label{semiG}
\begin{equation}
\sigma(\G_\phi^1)=\Sigma_+\,,\qquad \sigma(\G_\phi^2)=\sigma(\G_\phi)\setminus\Sigma_+
\end{equation}
and
\begin{equation}
(\lambda-\G_\phi^i)^{-1}=(\lambda-\G_\phi)^{-1}\vert_{\E_0^i}\,,\qquad i=1,2\,,\quad \lambda\in \rho(\G_\phi)\,.
\end{equation}
\end{subequations}
Choose $\delta>0$ such that
$$
\sup\{\mathrm{Re}\,\lambda\,;\,\lambda\in \sigma(\G_\phi^2)\}<\omega-2\delta<\omega+\delta<\inf \{\mathrm{Re}\,\lambda\,;\,\lambda\in \Sigma_+\}\,.
$$
It follows from \eqref{semiG} that $\G_\phi^1$ and $\G_\phi^2$ generate strongly continuous semigroups on $\E_0^1$ respectively~$\E_0^2$ such that
\begin{equation}\label{G11}
e^{t\G_\phi^1}=e^{t\G_\phi}\vert_{\E_0^1}=e^{t\G_\phi}P\vert_{\E_0^1}\,,\qquad
e^{t\G_\phi^2}=e^{t\G_\phi}\vert_{\E_0^2}=e^{t\G_\phi}(1-P)\vert_{\E_0^2}
\end{equation}
for $t\ge 0$.
In fact, $e^{t\G_\phi^1}$ is extended to $\R$ by (see also \cite[Proposition 2.3.3]{LunardiBook})
\begin{equation}\label{G1}
e^{t\G_\phi^1}=\frac{1}{2\pi i}\int_\Gamma e^{\lambda t}\, (\lambda-\G_\phi)^{-1}\,\rd \lambda\,,\quad t\in\R\,,
\end{equation}
where $\Gamma$ is a positively oriented smooth curve in $\rho(\G_\phi)$ enclosing $\Sigma_+$ with $\mathrm{Re}\,\lambda\ge \omega+\delta$ for every $\lambda\in\Gamma$. Using~\eqref{G1}, we have, for $\psi\in \E_0$,
\begin{align*}
\|e^{t\G_\phi^1}P\psi\|_{\E_0}\le \frac{1}{2\pi }\,\vert \Gamma\vert\,\sup_{\lambda\in \Gamma}\|(\lambda-\G_\phi)^{-1}\|_{\ml(\E_0)} \,e^{(\omega+\delta) t}\,\|\psi\|_{\E_0} \,,\quad t\le 0\,.
\end{align*}
Similarly, since
$$
\G_\phi\,e^{t\G_\phi^1}=\frac{1}{2\pi i}\int_\Gamma \lambda \,e^{\lambda t}\, (\lambda-\G_\phi)^{-1}\,\rd \lambda\,,\quad t\in\R\,,
$$
we have
\begin{align*}
\|\G_\phi e^{t\G_\phi^1}P\psi\|_{\E_0}\le \frac{1}{2\pi }\,\vert \Gamma\vert\,\sup_{\lambda\in \Gamma}\|\lambda\, (\lambda-\G_\phi)^{-1}\|_{\ml(\E_0)} \,e^{(\omega+\delta) t}\,\|\psi\|_{\E_0} \,,\quad t\le 0\,.
\end{align*}
Combining the two estimates we find $N\ge 1$ such that
\begin{align*}
\|e^{t\G_\phi^1}P\psi\|_{D(\G_\phi)}=\| e^{t\G_\phi^1}P\psi\|_{\E_0}+\|\G_\phi e^{t\G_\phi^1}P\psi\|_{\E_0} \le N \,e^{(\omega+\delta) t}\,\|\psi\|_{\E_0} \,,\quad t\le 0\,.
\end{align*}
Consequently, since $D(\G_\phi)\hookrightarrow \E_\alpha$ according to \cite[Corollary~3.4]{WalkerIUMJ}, we deduce~\eqref{1499}.

Finally, since $(e^{t\G_\phi})_{t\ge 0}$ is an eventually compact semigroup on $\E_0$ by Theorem~\ref{T:NormCont}, it follows from~\eqref{G11} that also $(e^{t\G_\phi^2})_{t\ge 0}$ is an eventually compact semigroup on $\E_0^2$, hence $\omega_0(\G_\phi^2)=s(\G_\phi^2)$ due to~\cite[IV.Corollary~3.11]{EngelNagel} and therefore $\omega_0(\G_\phi^2)<\omega-2\delta$ by the choice of $\delta$. Thus, there is $N_1\ge 1$ such that
\begin{align}\label{gv1}
\|e^{t\G_\phi}(1-P)\|_{\ml(\E_0)}\le N_1\,e^{(\omega-2\delta)t}\,,\quad t\ge 0\,.
\end{align}
Noticing that also
\begin{align}\label{gv2}
\|e^{t\G_\phi}(1-P)\|_{\ml(\E_0,\E_\alpha)}\le \|e^{t\G_\phi}\|_{\ml(\E_0,\E_\alpha)}\,\|(1-P)\|_{\ml(\E_0)}\le N_2\,t^{-\alpha}\,e^{\omega_1t}\,,\quad t> 0\,,
\end{align}
 for some $\omega_1>0$ and $N_2\ge 1$ due to~\eqref{growth}, we may use \eqref{gv1}-\eqref{gv2} and argue as in part~{\bf (iv)} of the proof of Corollary~\ref{C:Ealpha} to conclude~\eqref{14999}. This proves Lemma~\ref{Proj}.
\end{proof}

For the next step we introduce for a
given function $h\in C([0,T],E_0)$ and $\gamma\in\R$ (sticking to the notation of \cite[Definition~(5.3)]{WalkerZehetbauerJDE}) the function $W_{0,0}^{\gamma,h}$  by setting
\begin{subequations}\label{49998}	
\begin{equation} 
		W_{0,0}^{\gamma,h}(t,a)\, :=\, \left\{ \begin{aligned}
			&0\, ,& &   (t,a)\in \R\times J\,,\  t\le a\, ,\\
			& e^{-\gamma a}\,\Pi_{\phi}(a,0)\, B_{0,0}^{\gamma,h}(t-a)\, ,& &  (t,a)\in [0,T]\times J\, ,\ t>a\, ,
		\end{aligned}
		\right.
	\end{equation}
where $B_{0,0}^{\gamma,h}\in C([0,T],E_0)$ satisfies
\begin{align}
		B_{0,0}^{\gamma,h}(t)\,  =\, & \int_0^t  b_\phi (a)\, e^{-\gamma a}\,\Pi_{\phi}(a,0)\, B_{0,0}^{\gamma,h}(t-a)\, \rd
			a + h(t) \,,\quad t\in [0,T]\,,
	\end{align}
\end{subequations}
with the understanding that $b_\phi(a)=0$ whenever $a\notin J$. Here, 
 $\Pi_\phi$ denotes the parabolic evolution operator associated with $A_\phi$. Let $\varpi_\phi\in \R$ be such that 
$$
 \|\Pi_\phi(a,\sigma)\|_{\ml(E_\alpha)}+ (a-\sigma)^\alpha\|\Pi_\phi(a,\sigma)\|_{\ml(E_0,E_\alpha)}\le M_*\,  e^{\varpi_\phi (a-\sigma)}\,,\quad 0\le \sigma\le a\le a_m\,,
$$
for some $M_*\ge 1$ (see \eqref{EOx}).
Then we have:

\begin{lem} \label{LemmaF}
	Let $h\in C\big([0,T],E_0\big)$, $\gamma\in\R$, and $\theta\in [0,1)$. Then $W_{0,0}^{\gamma,h}\in C((-\infty,T],\E_\theta)$ and there are constants $\mu=\mu(\phi)>0$ and $c_0=c_0(\phi)>0$ (both independent of $T$, $\gamma$, and~$h$) such that
	\begin{equation}\label{LemmaFest}
		\|W_{0,0}^{\gamma,h}(t,\cdot)\|_{\E_\theta}\le c_0\int_0^t e^{(\varpi_\phi+\mu-\gamma)(t-a)}\,(t-a)^{-\theta}\,\|h(a)\|_{E_0}\,\rd a\,,\quad t\in [0,T]\,.
	\end{equation}
\end{lem}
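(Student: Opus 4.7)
The plan is to proceed in two stages: first establish a pointwise $E_0$-bound on the boundary trace $B:=B_{0,0}^{\gamma,h}$ directly from the Volterra identity via a Gronwall argument, then substitute into the defining formula for $W:=W_{0,0}^{\gamma,h}$ and integrate in $a$ using the smoothing estimate for $\Pi_\phi$. Existence of $B$ in $C([0,T],E_0)$ is routine: the Volterra equation is solved by a Banach fixed point argument, exploiting $b_\phi\in L_\infty(J,\ml(E_0))$ together with the bound $\|\Pi_\phi(a,0)\|_{\ml(E_0)}\le M_*e^{\varpi_\phi a}$ coming from the preceding evolution operator estimate.

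Taking the $E_0$-norm in the Volterra identity for $B$ and setting $\beta(t):=\|B(t)\|_{E_0}$, $\eta(t):=\|h(t)\|_{E_0}$, and $C_1:=M_*\|b_\phi\|_{L_\infty(J,\ml(E_0))}$ (with $b_\phi$ extended by zero outside $J$) yields
\begin{equation*}
\beta(t)\le \eta(t)+C_1\int_0^t e^{(\varpi_\phi-\gamma)(t-s)}\beta(s)\,\rd s\,.
\end{equation*}
An exponential shift $\tilde\beta(t):=e^{(\gamma-\varpi_\phi)t}\beta(t)$ reduces this to standard linear Gronwall form, and undoing the shift gives
\begin{equation*}
\beta(t)\le \eta(t)+C_1\int_0^t e^{(\varpi_\phi-\gamma+C_1)(t-s)}\eta(s)\,\rd s\,.
\end{equation*}

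To pass to $W$ in $\E_\theta$, I insert $W(t,a)=e^{-\gamma a}\Pi_\phi(a,0)B(t-a)$ for $a<t$, apply $\|\Pi_\phi(a,0)\|_{\ml(E_0,E_\theta)}\le M_*a^{-\theta}e^{\varpi_\phi a}$, and substitute $r=t-a$ to obtain
\begin{equation*}
\|W(t,\cdot)\|_{\E_\theta}\le M_*\int_0^t e^{(\varpi_\phi-\gamma)(t-r)}(t-r)^{-\theta}\beta(r)\,\rd r\,.
\end{equation*}
Plugging in the pointwise bound on $\beta$ and swapping the order of integration by Fubini produces a double integral whose inner integral is a convolution $\int_0^{t-s}(t-s-u)^{-\theta}e^{C_1 u}\,\rd u$, bounded by $e^{C_1(t-s)}(t-s)^{1-\theta}/(1-\theta)$. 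The elementary estimate $(t-s)^{1-\theta}\le e^{t-s}(t-s)^{-\theta}$ then absorbs the extra factor into a slightly larger exponent, yielding \eqref{LemmaFest} with $\mu:=C_1+1$ and $c_0:=M_*(1+C_1/(1-\theta))$, all depending only on $\phi$ (and $\theta$) and not on $T$, $\gamma$, or $h$.

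Finally, continuity $W\in C((-\infty,T],\E_\theta)$ is immediate on $(-\infty,0]$ since $W$ vanishes there, and on $(0,T]$ it follows from $B\in C([0,T],E_0)$ combined with the integrable bound $M_*a^{-\theta}e^{(\varpi_\phi-\gamma)a}$ on $\|e^{-\gamma a}\Pi_\phi(a,0)\|_{\ml(E_0,E_\theta)}$ via dominated convergence in $L_1(J,E_\theta)$. The main obstacle is tracking the interaction of the singular weight $(t-r)^{-\theta}$ with the exponential shift during the Fubini swap; the slight enlargement of $\mu$ beyond the Gronwall exponent $C_1$ is precisely what preserves the clean integrand $e^{(\varpi_\phi+\mu-\gamma)(t-a)}(t-a)^{-\theta}$ demanded by \eqref{LemmaFest}.
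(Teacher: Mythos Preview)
Your argument is correct. The paper does not prove this lemma at all; it simply cites \cite[Lemma~5.7]{WalkerZehetbauerJDE}, so there is no in-paper proof to compare against. Your two-step scheme (Gronwall on the Volterra trace $B$, then integrate the smoothing bound $\|\Pi_\phi(a,0)\|_{\ml(E_0,E_\theta)}\le M_*a^{-\theta}e^{\varpi_\phi a}$ over $a$ and absorb the resulting convolution via $x\le e^x$) is exactly the natural direct proof and would be the expected content of the cited reference.

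One small remark: your constant $c_0=M_*\big(1+C_1/(1-\theta)\big)$ depends on $\theta$, whereas the lemma as stated advertises $c_0=c_0(\phi)$. This is harmless in the applications (only $\theta\in\{0,\alpha\}$ are ever used, see~\eqref{do3}), and the stated $\theta$-independence is most likely an imprecision in the lemma's phrasing rather than a genuine claim; in any case your $\mu=C_1+1$ is $\theta$-free. The continuity at $t=0^+$, which you do not spell out, follows immediately from the estimate~\eqref{LemmaFest} itself since the right-hand side tends to zero.
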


\begin{proof}
This is \cite[Lemma 5.7]{WalkerZehetbauerJDE}.
\end{proof}

Now, let $u_0\in\E_\alpha$ be arbitrary and set
$$
w:=u(\cdot;u_0)-\phi \,,\qquad w_0:=u_0-\phi\,,
$$
where $u(\cdot;u_0)\in C\big(I(u_0),\E_\alpha\big)$ is the unique maximal solution to the nonlinear problem~\eqref{PPPP} provided by Proposition~\ref{T1JDE}. Then, using the expansions \eqref{14Ga} and \eqref{14Ea} of $m$ respectively $b$ and the notation from~\eqref{ell}, 
 we derive that $w\in C(I(u_0),\E_\alpha)$ is the generalized solution (in the sense of~\eqref{1000}, see \cite[Proposition 4.2]{WalkerZehetbauerJDE}) to 
\begin{subequations}\label{123}
	\begin{align}
	\partial_t w+ \partial_aw \, &=     A_\phi(a)w  +[\B_\phi w(t,\cdot)](a)+R_m(w(t))\,, \qquad t\in I(u_0)\, ,\quad a\in J\, ,\\ 
	w(t,0)&=\int_0^{a_m}b_\phi(a)\, w(t,a)\, \rd a +h_w(t)\,, \qquad t\in I(u_0)\, , \\
	w(0,a)&=  w_0(a)\,, \qquad a\in J\,,
\end{align}
where $h_w\in C(I(u_0),E_0)$ is defined as
\begin{equation}\label{17Bb}
h_w(t):=\int_0^{a_m}  R_b( w(t))(a) \, \rd a\,, \quad t\in I(u_0)\,,
\end{equation}
\end{subequations} 
and reminder terms $R_m$ and $R_b$ stemming from~\eqref{B}.

The characterization of the generator $\A_\phi$ given in Theorem~\ref{IUMJT1}~{\bf (c)} gives rise to a representation of $w=u(\cdot;u_0)-\phi$ in terms of the semigroup $(\T_\phi(t))_{t\ge 0}$. In fact, the following result was established in \cite{WalkerZehetbauerJDE} (see also \cite{Pruess83} for the non-diffusive case). It is fundamental for the investigation of stability properties of the equilibrium $\phi$.

\begin{prop}\label{representation}
Given $u_0\in \E_\alpha$  let $u(\cdot;u_0)\in C\big(I(u_0),\E_\alpha\big)$ with $I(u_0)=[0,T_{max}(u_0))$ be the unique maximal solution to the nonlinear problem~\eqref{PPPP} provided by Proposition~\ref{T1JDE}. Set \mbox{$w=u(\cdot;u_0)-\phi$} and \mbox{$w_0=u_0-\phi$}.
	Then $w\in C(I(u_0),\E_\alpha)$ can be written as
\begin{equation}
	\begin{split}\label{20}
	w(t)&=\T_\phi (t)w_0 +\int_0^t \T_\phi (t-s) \left( \big(\gamma+\B_\phi\big)W_{0,0}^{\gamma,h_w}(s,\cdot) +R_m(w(s))\right)\,\rd s + W_{0,0}^{\gamma,h_w}(t,\cdot)
\end{split}
\end{equation}
for $t\in I(u_0)$ and every $\gamma\in\R$, where $h_w\in C(I(u_0),E_0)$ stems from~\eqref{17Bb} and $$W_{0,0}^{\gamma,h_w}\in C\big((-\infty,T_{max}(u_0)),\E_\alpha\big)$$ from~\eqref{49998}.
\end{prop}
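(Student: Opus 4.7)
The representation will follow from splitting $w := u(\cdot;u_0) - \phi$ as $w = v + W$, with $W := W_{0,0}^{\gamma,h_w}$ engineered so as to absorb the inhomogeneity $h_w$ in the age-boundary condition of the system \eqref{123} already satisfied by $w$. The remainder $v$ then solves an inhomogeneous Cauchy problem for $\G_\phi$ with homogeneous age-boundary data, and the variation-of-constants formula for the semigroup $(\T_\phi(t))_{t\ge 0}$ will deliver~\eqref{20}.

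Concretely, the first step is to verify that $W$ defined by~\eqref{49998} is the generalized solution (in the sense of the characteristic representation~\eqref{1000}) of
\begin{align*}
\partial_t W + \partial_a W &= A_\phi(a)W - \gamma W,\\
W(t,0) &= \int_0^{a_m} b_\phi(a)\,W(t,a)\,\rd a + h_w(t),\qquad W(0,a) = 0.
\end{align*}
This is immediate from~\eqref{49998}: the factor $e^{-\gamma a}$ combined with $\Pi_\phi$ produces the $-\gamma W$ term, the case $t\le a$ gives $W(0,\cdot)=0$, and the Volterra equation for $B_{0,0}^{\gamma,h_w}$ encodes the inhomogeneous boundary condition, while Lemma~\ref{LemmaF} furnishes $W\in C((-\infty,T_{max}(u_0)),\E_\alpha)$. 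Subtracting this system from~\eqref{123} and writing $\B_\phi w = \B_\phi v + \B_\phi W$, the function $v := w - W$ is the generalized solution of
\begin{align*}
\partial_t v + \partial_a v &= A_\phi(a)v + [\B_\phi v(t,\cdot)](a) + [(\gamma + \B_\phi)W(t,\cdot)](a) + R_m(w(t))(a),\\
v(t,0) &= \int_0^{a_m} b_\phi(a)\,v(t,a)\,\rd a,\qquad v(0,a) = w_0(a).
\end{align*}
By Theorem~\ref{IUMJT1}(c), the operator $\G_\phi = \A_\phi + \B_\phi$ is characterized by exactly this left-hand side together with the homogeneous boundary condition, so $v$ is the mild solution of an abstract inhomogeneous Cauchy problem on $\E_0$ with source $F(s) := (\gamma+\B_\phi)W_{0,0}^{\gamma,h_w}(s,\cdot) + R_m(w(s))$ and initial datum $w_0$. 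Variation of constants then gives $v(t) = \T_\phi(t)w_0 + \int_0^t \T_\phi(t-s)F(s)\,\rd s$, and restoring $w = v + W$ produces~\eqref{20}.

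The delicate point is that $w$ is only a generalized solution in the sense of~\eqref{1000}, so the manipulations above cannot be carried out via pointwise derivatives but must be tracked at the level of the characteristic integral formulas. In practice one starts from~\eqref{1000} applied to $u$, uses the expansions~\eqref{14Ga}--\eqref{14Ea} of $m$ and $b$ around $\bar\phi$ together with the equilibrium equation~\eqref{EP} to isolate the linear part (which reproduces $\T_\phi(t)w_0$ and the convolution with $\B_\phi v$) from the remainder $R_m(w)$ and from the boundary inhomogeneity $h_w$, which then packages into $W_{0,0}^{\gamma,h_w}$ by the very construction of the latter. Well-posedness of the Bochner integral in the resulting formula is guaranteed by $F\in C(I(u_0),\E_0)$ (via~\eqref{14G2} for $R_m$ and Lemma~\ref{LemmaF} with $\theta=0$ for $W$) together with the smoothing estimate $\|\T_\phi(t-s)\|_{\ml(\E_0,\E_\alpha)} \le N_\alpha (t-s)^{-\alpha} e^{\omega(t-s)}$ of Corollary~\ref{C:Ealpha}, which provides an integrable singularity at $s=t$ and hence a value in $\E_\alpha$, as required.
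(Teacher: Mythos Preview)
The paper does not give its own proof here; it simply cites \cite[Proposition~6.1]{WalkerZehetbauerJDE}. Your plan---splitting $w=v+W_{0,0}^{\gamma,h_w}$ so that $W$ absorbs the inhomogeneous age-boundary term $h_w$, leaving $v$ as a solution of an inhomogeneous Cauchy problem for $\G_\phi$ with homogeneous age-boundary data, and then invoking variation of constants for $(\T_\phi(t))_{t\ge0}$---is exactly the strategy behind that reference, and your identification of the regularity and integrability issues is on target.

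One point deserves sharpening. You write that Theorem~\ref{IUMJT1}(c) lets you pass directly from ``$v$ is a generalized solution in the sense of~\eqref{1000} with homogeneous boundary data'' to ``$v$ is the mild semigroup solution $v(t)=\T_\phi(t)w_0+\int_0^t\T_\phi(t-s)F(s)\,\rd s$''. Theorem~\ref{IUMJT1}(c) only characterizes $\mathrm{dom}(\A_\phi)$; it does not by itself assert that the characteristic representation and the semigroup variation-of-constants formula agree for inhomogeneous data. Establishing that equivalence is precisely the substance of \cite[Proposition~6.1]{WalkerZehetbauerJDE} (building on the machinery of \cite[Section~5]{WalkerZehetbauerJDE}), and it is done---as you correctly anticipate in your final paragraph---by working entirely at the level of the integral formulas~\eqref{1000} and~\eqref{49998}, not by appealing to an abstract domain characterization. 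So your outline is right, but the sentence invoking Theorem~\ref{IUMJT1}(c) overstates what that result delivers; the real work is the explicit bookkeeping you sketch afterward.
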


\begin{proof}
This is \cite[Proposition 6.1]{WalkerZehetbauerJDE}.
\end{proof}

In the following, assume \eqref{defomega} and let the projection $P\in\ml(\E_0)$ and the constants $M$ and $\delta$ be as in Lemma~\ref{Proj}. Further,  let $\varpi_\phi$, $c_0$, and $\mu$ be as in Lemma~\ref{LemmaF}. Choose then $\gamma>0$ such that
$$
\kappa:=\omega-\varpi_\phi-\mu+\gamma>0\,
$$
and set
\begin{equation}\label{sigma0}
R_0:=\frac{c_0\Gamma(1-\alpha)}{\kappa^{1-\alpha}}+\frac{M}{\delta}\,\big(1+\delta^\alpha\Gamma(1-\alpha)\big)\Big(\|\gamma+\B_\phi\|_{\ml(\E_0)}\frac{c_0}{\kappa}+1 \Big) \,,
\end{equation}
where $\Gamma$ denotes the Gamma function.
Recalling the function $d_o$ from \eqref{14G2} and \eqref{do2}, we may choose $r>0$ 
and  $w_0\in \E_0^1$ is such that
\begin{equation}\label{choice}
R_0\, d_o(r)<\frac{\|w_0\|_{\E_\alpha}}{r} <\frac{1}{2M}\le \frac{1}{2}\,.
\end{equation}

In order to prove the instability of $\phi$, we now show the existence of a sequence $(u_0^k)_{k\ge 1}$ such that $u_0^k\to\phi$ in $\E_\alpha$ and $$\inf_{k\ge 1}\|u(k;u_0^k)-\phi\|_{\E_\alpha}>0\,.$$ To this end, we first derive backwards solutions to problem~\eqref{PPPP}:

\begin{lem}\label{backwards}
Let $r>0$ and $w_0\in \E_0^1$ be as in \eqref{choice}.
Then, for each integer $k\ge 1$, there exists a unique function $v_k\in C\big((-\infty,k],\E_\alpha\big)$ such that
\begin{align}\label{413A0}
	v_k(t)&=\T_\phi (t-k)w_0 +W_{0,0}^{\gamma,h_{v_k}}(t,\cdot)\nonumber\\
&\quad -\int_t^k \T_\phi (t-s) P\left( \big(\gamma+\B_\phi\big)W_{0,0}^{\gamma,h_{v_k}}(s,\cdot) +R_m(v_k(s))\right)\,\rd s \\
&\quad +\int_{-\infty}^t \T_\phi (t-s) (1-P)\left( \big(\gamma+\B_\phi\big)W_{0,0}^{\gamma,h_{v_k}}(s,\cdot) +R_m(v_k(s))\right)\,\rd s \nonumber
\end{align}
for $t\le k$ and satisfying
\begin{equation}\label{413A}
\|v_k(t)\|_{\E_\alpha}\le r e^{\omega(t-k)}\,,\quad t\le k\,.
\end{equation}
\end{lem}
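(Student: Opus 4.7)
The plan is to construct $v_k$ as the unique fixed point, in a weighted Banach space that encodes the desired decay~\eqref{413A}, of the map $\mathcal{F}$ sending a function to the right-hand side of~\eqref{413A0}. Specifically, put
\[
X_k := \bigl\{ v \in C((-\infty,k], \E_\alpha) \,:\, \|v\|_{X_k} := \sup_{t \le k} e^{-\omega(t-k)} \|v(t)\|_{\E_\alpha} < \infty \bigr\}
\]
and let $\overline{B}_r \subset X_k$ denote the closed ball of radius $r$. For $v \in \overline{B}_r$, let $h_v$ be as in~\eqref{17Bb}, $W_{0,0}^{\gamma,h_v}$ as in~\eqref{49998}, and define $\mathcal{F}v$ by the right-hand side of~\eqref{413A0}. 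Since $w_0 \in \E_0^1 \subset D(\G_\phi)$, the term $\T_\phi(t-k)w_0$ is well-defined for all $t\le k$ by the group extension in Lemma~\ref{Proj}. Banach's contraction principle will then produce the required $v_k$, and membership in $\overline{B}_r$ is precisely~\eqref{413A}.

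All estimates are driven by a single cancellation. For $v \in \overline{B}_r$, the bounds $\|v(s)\|_{\E_\alpha} \le r e^{\omega(s-k)}$ and~\eqref{14G2} give $\|R_m(v(s))\|_{\E_0} + \|h_v(s)\|_{E_0} \le 2 d_o(r)\, r\, e^{\omega(s-k)}$, while Lemma~\ref{LemmaF} applied with $\theta \in \{0,\alpha\}$ and the choice $\kappa = \omega - \varpi_\phi - \mu + \gamma > 0$ converts $e^{(\varpi_\phi+\mu-\gamma)(s-a)} e^{\omega(a-k)}$ into $e^{\omega(s-k)} e^{-\kappa(s-a)}$ and yields
\[
\|W_{0,0}^{\gamma, h_v}(s,\cdot)\|_{\E_\theta} \le c_0\, d_o(r)\, r\, e^{\omega(s-k)}\, \kappa^{\theta-1}\,\Gamma(1-\theta).
\]
Plugging these bounds into~\eqref{413A0} and invoking Lemma~\ref{Proj}, i.e.\ \eqref{1499} for the $P$-integral over $[t,k]$ (where $t-s\le 0$) and~\eqref{14999} for the $(1-P)$-integral over $(-\infty,t]$ (where $t-s\ge 0$), the decisive identity $e^{(\omega\pm\delta)(t-s)}e^{\omega(s-k)} = e^{\omega(t-k)} e^{\pm\delta(t-s)}$ extracts the weight $e^{\omega(t-k)}$ and reduces matters to the elementary integrals $\int_0^\infty e^{-\delta u}\,\rd u = \delta^{-1}$ and $\int_0^\infty u^{-\alpha} e^{-\delta u}\,\rd u = \delta^{\alpha-1}\Gamma(1-\alpha)$. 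Summing all three contributions produces $\|\mathcal{F}v\|_{X_k} \le M\|w_0\|_{\E_\alpha} + R_0\, d_o(r)\, r$ with exactly the $R_0$ of~\eqref{sigma0}, and the two inequalities in~\eqref{choice} give $\|\mathcal{F}v\|_{X_k} < r$. Because the Volterra equation defining $W_{0,0}^{\gamma, h}$ is linear in $h$, the Lipschitz bounds~\eqref{do2} propagate through the identical chain of estimates to yield $\|\mathcal{F}v - \mathcal{F}w\|_{X_k} \le R_0\, d_o(r)\, \|v - w\|_{X_k}$, a strict contraction by~\eqref{choice}.

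The main technical obstacle will be to verify that $\mathcal{F}v$ really defines an element of $X_k$, i.e.\ that it is strongly continuous as a map $(-\infty,k] \to \E_\alpha$. This is delicate for the improper integral over $(-\infty,t]$: the estimate~\eqref{14999} produces a $(t-s)^{-\alpha}$ singularity at the upper endpoint, which is integrable only because $\alpha<1$, while the factor $e^{-\delta(t-s)}$ combined with the exponential decay of $\|(1-P)\Phi(s)\|_{\E_0}\lesssim e^{\omega(s-k)}$ controls the lower tail. Here one should also record that $W_{0,0}^{\gamma,h_v}(s,\cdot)\equiv 0$ for $s\le 0$ by the definition in~\eqref{49998}, so all the bounds remain consistent as the backwards variable $s$ crosses the origin. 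Once these continuity and convergence issues are checked, Banach's theorem delivers the unique fixed point $v_k\in\overline{B}_r$, and its membership in $\overline{B}_r$ is the asserted estimate~\eqref{413A}.
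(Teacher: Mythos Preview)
Your proposal is correct and follows essentially the same route as the paper: the space $X_k$ with its closed ball $\overline{B}_r$ is exactly the complete metric space used in the paper, the decisive cancellation $e^{(\omega\pm\delta)(t-s)}e^{\omega(s-k)}=e^{\omega(t-k)}e^{\pm\delta(t-s)}$ is the heart of both arguments, and the resulting self-map and contraction bounds $M\|w_0\|_{\E_\alpha}+R_0\,d_o(r)\,r<r$ and $R_0\,d_o(r)<1/2$ via~\eqref{choice} are identical. Your flagging of the continuity verification for $\mathcal{F}v$ is a point the paper leaves implicit; the minor factor of~$2$ you write in the $R_m$--$h_v$ bound is harmless (indeed~\eqref{14G2} bounds the \emph{sum} by $d_o(r)$, so no factor of~$2$ is needed) and does not affect the exact constant~$R_0$.
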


\begin{proof}
Let $k\ge 1$ be fixed. We introduce the complete metric space $Z=(Z,d_Z)$ by
$$
Z:=\big\{v\in C\big((-\infty,k],\E_\alpha\big)\,;\, \| v(t)\|_{\E_\alpha}\le r e^{\omega(t-k)}\,,\,t\le k\big\}
$$
equipped with the metric
$$
d_Z(v,\bar v):=\sup_{t\le k}\Big(e^{-\omega(t-k)} \|v(t)-\bar v(t)\|_{\E_\alpha} \Big)\,,\quad v, \bar v\in Z\,,
$$
and claim that
\begin{align*}
	H(v)(t)&:=\T_\phi (t-k)w_0 +W_{0,0}^{\gamma,h_{v}}(t,\cdot)\nonumber\\
&\qquad -\int_t^k \T_\phi (t-s) \,P\,\left( \big(\gamma+\B_\phi\big)W_{0,0}^{\gamma,h_{v}}(s,\cdot) +R_m(v(s))\right)\,\rd s \\
&\qquad +\int_{-\infty}^t \T_\phi (t-s) \, (1-P)\, \left( \big(\gamma+\B_\phi\big)W_{0,0}^{\gamma,h_{v}}(s,\cdot) +R_m(v(s))\right)\,\rd s \nonumber
\end{align*}
defines a contraction $H:Z\to Z$. Indeed, for $v\in Z$ we have $\|v(t)\|_{\E_\alpha}\le r$ for $t\le k$ and thus, invoking~\eqref{14G2},
\begin{subequations}\label{LemmaFest1}
\begin{equation} 
\|R_m(v(t))\|_{\E_0} \le d_o(r)\,\|v(t)\|_{\E_\alpha}\le d_o(r)\,r\, e^{\omega(t-k)}\,,\quad t\le k\,,
\end{equation}
and, together with~\eqref{17Bb},
\begin{equation*}
\|h_v(t)\|_{E_0} \le\|R_b(v(t))\|_{\E_0}\le d_o(r)\,\|v(t)\|_{\E_\alpha}\le d_o(r)\,r e^{\omega(t-k)}\,,\quad t\le k \,.
\end{equation*}
Therefore, \eqref{LemmaFest} implies for $v\in Z,$ $t\le k$, and $\theta\in\{0,\alpha\}$ that
\begin{align}
		\|W_{0,0}^{\gamma,h_v}(t,\cdot)\|_{\E_\theta}&\le \chi_{\R^+}(t)c_0d_o(r) r \int_0^t e^{(\varpi_\phi+\mu-\gamma)(t-a)}\,(t-a)^{-\theta}\,e^{\omega(a-k)}\,\rd a\nonumber\\
		&\le \frac{c_0\Gamma(1-\theta)}{\kappa^{1-\theta}}d_o(r) r e^{\omega(t-k)}\,.\label{do3}
	\end{align}
\end{subequations}
We then use~\eqref{proj},~\eqref{LemmaFest1}, and \eqref{sigma0} to derive
\begin{align*}
&	\|H(v)(t)\|_{\E_\alpha} \\
& \le \|\T_\phi (t-k)P\|_{\ml(\E_\alpha)}\,\| w_0\|_{\E_\alpha} +\|W_{0,0}^{\gamma,h_{v}}(t,\cdot)\|_{\E_\alpha}\\
&\quad +\int_t^k \|\T_\phi (t-s)P\|_{\ml(\E_0,\E_\alpha)} \Big( \|\gamma+\B_\phi\|_{\ml(\E_0)}\,\|W_{0,0}^{\gamma,h_{v}}(s,\cdot)\|_{\E_0} +\|R_m(v(s))\|_{\E_0}\Big)\,\rd s \\
&\quad +\int_{-\infty}^t \|\T_\phi (t-s)(1-P)\|_{\ml(\E_0,\E_\alpha)}  \Big( \|\gamma+\B_\phi\|_{\ml(\E_0)}\,\|W_{0,0}^{\gamma,h_{v}}(s,\cdot)\|_{\E_0} +\|R_m(v(s))\|_{\E_0}\Big)\,\rd s \nonumber\\
&\le M e^{(\omega+\delta)(t-k)}\,\| w_0\|_{\E_\alpha} +\frac{c_0\Gamma(1-\alpha)}{\kappa^{1-\alpha}}\,d_o(r)\, r\, e^{\omega(t-k)}\\
&\quad +M\, r\,d_o(r) \Big( \|\gamma+\B_\phi\|_{\ml(\E_0)}\,\frac{c_0}{\kappa}  +1\Big) \int_t^k e^{(\omega+\delta)(t-s)}\,e^{\omega(s-k)}\,\rd s \\
&\quad +M\, r\,d_o(r) \Big( \|\gamma+\B_\phi\|_{\ml(\E_0)}\,\frac{c_0}{\kappa} +1\Big) \int_{-\infty}^t e^{(\omega-\delta)(t-s)}\, e^{\omega(s-k)}\,(t-s)^{-\alpha}\,\rd s
\end{align*}
\begin{align*}
&\hspace{-3.5cm}\le M e^{\omega(t-k)}\,\| w_0\|_{\E_\alpha} +\frac{c_0\Gamma(1-\alpha)}{\kappa^{1-\alpha}}\,d_o(r)\, r\, e^{\omega(t-k)}\\
&\hspace{-3.5cm}\quad + \frac{M}{\delta}\, r\,d_o(r)\,\big(1+\delta^\alpha \Gamma(1-\alpha)\big)  \Big( \|\gamma+\B_\phi\|_{\ml(\E_0)}\,\frac{c_0}{\kappa}  +1\Big) e^{\omega(t-k)}\\
&\hspace{-3.5cm}= \left(\frac{M}{r}\| w_0\|_{\E_0^1} + R_0d_o(r)\right)\, r\,e^{\omega(t-k)}
\end{align*}
so that \eqref{choice} implies
\begin{align*}
	\|H(v)(t)\|_{\E_\alpha}\le\, r\,e^{\omega(t-k)}\,,\qquad v\in Z\,,\quad t\le k\,.
\end{align*}
That is, $H:Z\to Z$. Next, notice from~\eqref{do2} that, for $v_1, v_2\in Z$,
\begin{subequations}\label{LemmaFest2}
\begin{equation} 
\|R_m(v_1(t))-R_m(v_2(t))\|_{\E_0} \le  d_o(r)\,d_Z(v_1,v_2)\, e^{\omega(t-k)}\,,\quad t\le k\,,
\end{equation}
while from~\eqref{17Bb} and ~\eqref{do2},
\begin{equation*}
\|h_{v_1}(t)-h_{v_2}(t)\|_{E_0} \le\|R_b(v_1(t))-R_b(v_2(t))\|_{\E_0}\le d_o(r)\,d_Z(v_1,v_2)\, e^{\omega(t-k)}\,,\quad t\le k \,.
\end{equation*}
Therefore, since the mapping $[h\to W_{0,0}^{\gamma,h}]$ is linear, it follows from~\eqref{LemmaFest} that
\begin{align}
		\|W_{0,0}^{\gamma,h_{v_1}}(t,\cdot)-W_{0,0}^{\gamma,h_{v_2}}(t,\cdot)\|_{\E_\alpha}
		\le \frac{c_0\Gamma(1-\alpha)}{\kappa^{1-\alpha}}d_o(r) d_Z(v_1,v_2)  e^{\omega(t-k)}\,,\quad t\le k\,.
	\end{align}
\end{subequations}
Using then \eqref{LemmaFest2} we derive similarly as above that, for $v_1, v_2\in Z$,
\begin{align*}
	\|H(v_1)(t)-H(v_2)(t)\|_{\E_\alpha} & \le R_0\, d_o(r)\, d_Z(v_1,v_2)\,\,e^{\omega(t-k)}\,,\quad t\le k\,,
\end{align*}
so that \eqref{choice} implies
\begin{align*}
	d_Z\big(H(v_1),H(v_2)\big)\le\,  \frac{1}{2}\, d_Z(v_1,v_2)\,,\quad v_1, v_2\in Z \,.
\end{align*}
Consequently, $H:Z\to Z$ is indeed a contraction, and Lemma~\ref{backwards} follows from Banach's fixed point theorem.
\end{proof}

In fact, for positive times we have a simpler representation of $v_k$:

\begin{cor}\label{backwards2}
Let $r>0$ and $w_0\in \E_0^1$ be as in \eqref{choice}. Then $v_k\in C\big((-\infty,k],\E_\alpha\big)$ from Lemma~\ref{backwards} satisfies
\begin{align*} 
	v_k(t)&=W_{0,0}^{\gamma,h_{v_k}}(t,\cdot)+\T_\phi (t)v_k(0)\\
&\quad  +\int_0^t \T_\phi (t-s) \left( \big(\gamma+\B_\phi\big)W_{0,0}^{\gamma,h_{v_k}}(s,\cdot) +R_m(v_k(s))\right)\,\rd s 
\end{align*}
for $0\le t\le k$.
\end{cor}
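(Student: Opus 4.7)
The plan is to start from the representation~\eqref{413A0} for $v_k$ provided by Lemma~\ref{backwards}, evaluate it at $t=0$, apply $\T_\phi(t)$, and use the semigroup/projection machinery of Lemma~\ref{Proj} together with the definition of $W_{0,0}^{\gamma,h_{v_k}}$ to rearrange the resulting expression into the claimed formula. Let me abbreviate
\[
F(s):=\big(\gamma+\B_\phi\big)W_{0,0}^{\gamma,h_{v_k}}(s,\cdot) +R_m(v_k(s))\,,\qquad s\le k\,,
\]
so that~\eqref{413A0} reads
\[
v_k(t)=\T_\phi(t-k)w_0+W_{0,0}^{\gamma,h_{v_k}}(t,\cdot)-\int_t^k\T_\phi(t-s)\,P\,F(s)\,\rd s+\int_{-\infty}^t\T_\phi(t-s)\,(1-P)\,F(s)\,\rd s\,.
\]
Setting $t=0$ and using that $W_{0,0}^{\gamma,h_{v_k}}(0,\cdot)\equiv 0$ (by the upper branch of the definition~\eqref{49998} since $0\le a$ for every $a\in J$) yields
\[
v_k(0)=\T_\phi(-k)w_0-\int_0^k\T_\phi(-s)\,P\,F(s)\,\rd s+\int_{-\infty}^0\T_\phi(-s)\,(1-P)\,F(s)\,\rd s\,.
\]

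Now fix $0\le t\le k$ and apply $\T_\phi(t)$ to $v_k(0)$. Because $P$ is a spectral projection for $\G_\phi$, it commutes with $\T_\phi(\sigma)$ for $\sigma\ge 0$; moreover $\T_\phi$ extends to the whole real line on $\E_0^1=P(\E_0)$ (see~\eqref{G1}) and satisfies the group law there, and $\T_\phi(t)\T_\phi(-s)(1-P)=\T_\phi(t-s)(1-P)$ for $s\le 0$ by the ordinary semigroup law on $\E_0$. Hence
\[
\T_\phi(t)v_k(0)=\T_\phi(t-k)w_0-\int_0^k\T_\phi(t-s)\,P\,F(s)\,\rd s+\int_{-\infty}^0\T_\phi(t-s)\,(1-P)\,F(s)\,\rd s\,.
\]

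Adding $W_{0,0}^{\gamma,h_{v_k}}(t,\cdot)+\int_0^t\T_\phi(t-s)F(s)\,\rd s$, splitting the last integral via $F=PF+(1-P)F$, and combining
\[
\int_0^t \T_\phi(t-s)\,P\,F(s)\,\rd s-\int_0^k\T_\phi(t-s)\,P\,F(s)\,\rd s=-\int_t^k\T_\phi(t-s)\,P\,F(s)\,\rd s
\]
together with
\[
\int_{-\infty}^0\T_\phi(t-s)\,(1-P)\,F(s)\,\rd s+\int_0^t\T_\phi(t-s)\,(1-P)\,F(s)\,\rd s=\int_{-\infty}^t\T_\phi(t-s)\,(1-P)\,F(s)\,\rd s\,,
\]
one recovers exactly the right-hand side of~\eqref{413A0} at time $t$, i.e.\ $v_k(t)$, proving the asserted identity.

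The only subtlety is the careful handling of $\T_\phi(\sigma)$ for $\sigma$ of mixed sign: this is legitimate because the part acted on by $P$ lives in the finite-dimensional (and in any case invertible) invariant subspace $\E_0^1$ where the semigroup extends to a group, and $P$ commutes with $\T_\phi(\sigma)$ for $\sigma\ge 0$, so the group and semigroup laws combine consistently at the ``gluing'' point $s=0$. Everything else is pure bookkeeping of integration limits.
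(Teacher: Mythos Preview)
Your proof is correct and takes a genuinely different route from the paper. The paper's argument is dynamical: it sets $p(t)=W_{0,0}^{\gamma,h_{v_k}}(t,\cdot)$ and $q(t)=F(t)$, shows (after an approximation step replacing $p,q$ by smooth functions and $w_0$ by elements of $\dom(\G_\phi)$) that $v_k-p$ satisfies the linear ODE $\tfrac{\rd}{\rd t}(v_k-p)=\G_\phi(v_k-p)+q$, and then applies the variation-of-constants formula on $[0,t]$. Your argument, by contrast, is purely algebraic: you evaluate~\eqref{413A0} at $t=0$, apply $\T_\phi(t)$, and verify that the resulting expression plus the two correction terms reproduces~\eqref{413A0} at time~$t$. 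This avoids any differentiation and hence any approximation argument; all you need is the group law for $\T_\phi$ on $\E_0^1=P(\E_0)$ (already established via~\eqref{G1}) together with the ordinary semigroup law on $\E_0$ and the fact that $P$ commutes with $\T_\phi(\sigma)$ for $\sigma\ge 0$. The paper's approach has the conceptual advantage of explaining \emph{why} the formula holds (the difference solves a linear Cauchy problem), whereas yours is shorter and sidesteps the density/regularity justification entirely.
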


\begin{proof}
Define for $t\le k$
$$
q(t):=\big(\gamma+\B_\phi\big)W_{0,0}^{\gamma,h_{v_k}}(t,\cdot) +R_m(v_k(t))\,,\qquad p(t):=W_{0,0}^{\gamma,h_{v_k}}(t,\cdot)
$$
so that
\begin{align*} 
v_k(t)-p(t)&=\T_\phi (t-k)w_0  
-\int_t^k \T_\phi (t-s) \,P\,q(s)\,\rd s  +\int_{-\infty}^t \T_\phi (t-s) \, (1-P)\, q(s)\,\rd s\,.  
\end{align*}
 Then $q\in C((-\infty,k],\E_0)$  and $p\in C((-\infty,k],\E_\alpha)$ by Lemma~\ref{LemmaF}. However, we may  approximate $q,p$ uniformly on compact intervals by continuously differentiable functions with compact support and $w_0$ by a sequence in $\dom(\G_\phi)$ to justify the formal computation
\begin{align*} 
	\frac{\rd}{\rd t}\big(v_k(t)-p(t)\big)&=\G_\phi \,\T_\phi (t-k)w_0  +Pq(t)\\
&\quad -\int_t^k \G_\phi \T_\phi (t-s)  P \,q(s)\,\rd s \\
&\quad +\int_{-\infty}^t \G_\phi \T_\phi (t-s)  (1-P) q(s)\,\rd s +(1-P) q(t) \\
&=\G_\phi\big(v_k(t)-p(t)\big) +q(t)\,.
\end{align*}
Thus, for $0\le t\le k$, 
\begin{align*} 
v_k(t)-p(t)&= \T_\phi (t)\big(v_k(0)-p(0)\big)+\int_0^t \T_\phi (t-s) q(s)\,\rd s
\end{align*}
and since $p(0)=W_{0,0}^{\gamma,h_{v_k}}(0,\cdot)=0$, the assertion follows.
\end{proof}

 We are now in a position to provide the proof of Theorem~\ref{TInStable}~{\bf (b)}.

\subsection*{Proof of Instability: Theorem~\ref{TInStable}~{\bf (b)}} In order to prove instability, we may assume without loss of generality that all solutions $u(\cdot;u_0)$ to~\eqref{PPPP} provided by Proposition~\ref{T1JDE} exist globally -- that is, $T_{max}(u_0)=\infty$ -- whenever the initial values $u_0$ are close to the equilibrium $\phi$.  We set
$$
u_0^k:=v_k(0)+\phi\,, \quad k\ge 1\,,
$$
and note from~\eqref{413A} that
\begin{equation*}\label{u0}
\|u_0^k-\phi\|_{\E_\alpha}=\|v_k(0)\|_{\E_\alpha}\le r e^{-\omega k}\rightarrow 0\,,\quad m\to\infty\,.
\end{equation*}
Hence $T_{max}(u_0^k)=\infty$ as just agreed. Let  $w_k:=u(\cdot;u_0^k)-\phi\in C(\R^+,\E_\alpha)$. Then Lemma~\ref{backwards2} and Proposition~\ref{representation} entail that both $w_k, v_k\in C([0,k],\E_\alpha)$
satisfy the fixed point equation
\begin{equation*}
	\begin{split}
	z(t)&=\T_\phi (t)\big(u_0^k-\phi\big) +\int_0^t \T_\phi (t-s) \left( \big(\gamma+\B_\phi\big)W_{0,0}^{\gamma,h_z}(s,\cdot) +R_m(z(s))\right)\,\rd s + W_{0,0}^{\gamma,h_z}(t,\cdot)
\end{split}
\end{equation*}
for $t\in [0,k]$. It is easily seen that Gronwall's inequality ensures uniqueness in $C([0,k],\E_\alpha)$ of this fixed point equation, hence $v_k=w_k$ on $[0,k]$. Thus, we deduce from~\eqref{413A0},~\eqref{proj}, and~\eqref{LemmaFest1} that
\begin{align*} 
\|u(k;u_0^k)&-\phi\|_{\E_\alpha}= \|w_k(k)\|_{\E_\alpha}=\|v_k(k)\|_{\E_\alpha}\\
&\ge \|w_0\|_{\E_\alpha}- \|W_{0,0}^{\gamma,h_{v_k}}(k,\cdot)\|_{\E_\alpha} \\
&\quad -
\int_{-\infty}^k  \|\T_\phi (k-s)(1-P)\|_{\ml(\E_0,\E_\alpha)} \\
&\qquad\qquad \times \left( \|\gamma+\B_\phi\|_{\ml(\E_0)}\,\|W_{0,0}^{\gamma,h_{v_k}}(s,\cdot)\|_{\E_0} +\|R_m(v_k(s))\|_{\E_0}\right)\,\rd s\\
&\ge \|w_0\|_{\E_\alpha}- \frac{c_0\Gamma(1-\alpha)}{\kappa^{1-\alpha}}\,d_o(r) r\\
&\quad - M  \left( \|\gamma+\B_\phi\|_{\ml(\E_0)}\,\frac{c_0}{\kappa} +1 \right) d_o(r) r
\int_{-\infty}^k e^{(\omega-\delta)(k-s)}  e^{\omega(s-k)}\, (k-s)^{-\alpha}\rd s\\
&= \|w_0\|_{\E_\alpha}- \frac{c_0\Gamma(1-\alpha)}{\kappa^{1-\alpha}}d_o(r) r \\
&\quad- \frac{M}{\delta^{1-\alpha}} \Gamma(1-\alpha) \left( \|\gamma+\B_\phi\|_{\ml(\E_0)}\frac{c_0}{\kappa} +1 \right)d_o(r) r=:\xi_0\,,
\end{align*}
where, due to~\eqref{choice} and~\eqref{sigma0},
$$
\xi_0> \left(R_0 -\frac{c_0\Gamma(1-\alpha)}{\kappa^{1-\alpha}} - \frac{M}{\delta^{1-\alpha}} \Gamma(1-\alpha) \left( \|\gamma+\B_\phi\|_{\ml(\E_0)}\,\frac{c_0}{\kappa} +1 \right)\right)\, d_o(r)\, r>0\,.
$$
Consequently, we have shown that there exists a sequence $(u_0^k)_{k\ge 1}$ such that \mbox{$u_0^k\to\phi$} in~$\E_\alpha$ as $k\to \infty$ while \mbox{$\|u(k;u_0^k)-\phi\|_{\E_\alpha}\ge \xi_0$} for $k\ge 1$. 
This proves that $\phi$ is unstable in~$\E_\alpha$ and thus Theorem~\ref{TInStable}~{\bf (b)}.\qed

\subsection*{Rephrasing the Eigenvalue Problem}\label{SecRephrase}

According to Theorem~\ref{TInStable}, the stability of an equilibrium $\phi$ is determined from the eigenvalues of the operator $\G_\phi=\A_\phi+\B_\phi$.  Clearly, $\lambda\in\C$ is an eigenvalue of $\G_\phi=\A_\phi+\B_\phi$ if and only if there is some $\psi\in\mathrm{dom}(\A_\phi)$ such that $(\lambda-\A_\phi-\B_\phi)\psi=0$.
Now, due to Theorem~\ref{IUMJT1}~{\bf (c)} and \eqref{ell}, this is equivalent to $\psi\in C(J,E_0)$ solving (in a mild sense)
\begin{subequations}\label{hu}
\begin{align}
\partial_a\psi&=\big(-\lambda+A_\phi (a)\big)\psi +(\B_\phi \psi)(a)\,,\quad a\in J\,,\label{hu1}\\
\psi(0)&=\int_0^{a_m}b_\phi(a)\,\psi(a)\,\rd a\,.\label{hu2}
\end{align}
\end{subequations}
Note that \eqref{hu1} entails
\begin{align}\label{psi0EV2}
\psi(a)=e^{-\lambda a}\Pi_{\phi}(a,0)\psi(0)+\int_0^a e^{-\lambda (a-\sigma)}\,\Pi_{\phi}(a,\sigma)(\B_\phi \psi)(\sigma)\,\rd\sigma\,,\quad a\in J\,,
\end{align}
which, when plugged into \eqref{hu2}, yields
\begin{align}\label{psi0EV1}
\left(1-\int_0^{a_m}b_\phi(a)\, e^{-\lambda a}\,\Pi_{\phi}(a,0)\,\rd a\right)\psi(0)&= \int_0^{a_m}b_\phi(a)\int_0^ae^{-\lambda (a-\sigma)}\,\Pi_{\phi}(a,\sigma)(\B_\phi \psi)(\sigma)\,\rd\sigma\rd a\,.
\end{align}
Recall from  \eqref{BBell} that
$$
[\B_\phi \psi](a)=-\partial m(\bar\phi,a)[\bar \psi]\phi(a)\,,\quad a\in J\,.
$$
We thus introduce
\begin{align}\label{KL}
K_{\phi,\lambda}(a)\bar\psi:=\int_0^ae^{-\lambda (a-\sigma)}\,\Pi_{\phi}(a,\sigma)\,\partial m(\bar\phi,\sigma)[\bar \psi]\phi(\sigma)\,\rd \sigma\,,\quad a\in J\,,
\end{align}
and then obtain from \eqref{psi0EV1} that
\begin{align*} 
\left(1-\int_0^{a_m}b_\phi(a)\, e^{-\lambda a}\,\Pi_{\phi}(a,0)\,\rd a\right)\psi(0)&= -\int_0^{a_m}b_\phi(a)K_{\phi,\lambda}(a)\,\rd a\,\bar\psi\,.
\end{align*}
Moreover, \eqref{psi0EV2} implies that $\bar\psi$ satisfies
\begin{align*} 
\left(1+\int_0^{a_m}\varrho(a) K_{\phi,\lambda}(a)\,\rd a\right)\bar\psi=\int_0^{a_m}e^{-\lambda a}\,\varrho(a) \Pi_{\phi}(a,0)\,\rd a\, \psi(0)\,.
\end{align*}
Therefore, $\lambda\in\C$ is an eigenvalue of $\G_\phi=\A_\phi+\B_\phi$ if and only if there is a nontrivial eigenvector $(\psi(0),\bar\psi)\in E_0\times E_0$ in the sense  that
\begin{equation}\label{eigenvals}
\begin{pmatrix}
1-\displaystyle\int_0^{a_m}e^{-\lambda a}\,b_\phi(a)\, \Pi_{\phi}(a,0)\,\rd a & \displaystyle\int_0^{a_m}b_\phi(a)K_{\phi,\lambda}(a)\,\rd a \\[10pt]
-\displaystyle\int_0^{a_m}e^{-\lambda a}\,\varrho(a) \Pi_{\phi}(a,0)\,\rd a &  1+\displaystyle\int_0^{a_m}\varrho(a) K_{\phi,\lambda}(a)\,\rd a
\end{pmatrix} \begin{pmatrix}
\psi(0) \\[3pt] \bar\psi
\end{pmatrix}=0\,,
\end{equation}
where  $K_{\phi,\lambda}$ is defined in \eqref{KL} with $\Pi_{\phi}$ denoting the evolution operator associated with $A_\phi$ given by
\begin{equation*}
A_\phi(a)v=A(a)v-m(\bar\phi,a)v\,,\quad v\in E_1\,,\quad a\in J\,,
\end{equation*}
and
$$
b_\phi(a)v= b(\bar \phi,a) v+\int_0^{a_m}  \partial b(\bar \phi,\sigma)[\varrho(a) v]\, \phi(\sigma)\,\rd \sigma\,,\quad a\in J\,,\quad v\in E_0\,.
$$
Consequently, we obtain from Theorem~\ref{TInStable}:\vspace{2mm}

\begin{prop}~\label{P47}
Let $\alpha\in [0,1)$. Assume \eqref{A1}, \eqref{A11}, \eqref{B} and let $\phi\in\E_1 \cap C(J,E_\alpha)$ be an equilibrium  to~\eqref{PPPP}. The following hold:
\begin{itemize}
\item[\bf (a)] If $\mathrm{Re}\,\lambda < 0$ for every $\lambda\in\C$ for which there is a nontrivial \mbox{$(\psi(0),\bar\psi)\in E_0\times E_0$} satisfying~\eqref{eigenvals}, then the equilibrium $\phi$  is exponentially asymptotically stable in~$\E_\alpha$.

\item[\bf (b)] If there are  $\lambda\in\C$ with $\mathrm{Re}\,\lambda > 0$ and a nontrivial $(\psi(0),\bar\psi)\in E_0\times E_0$ satisfying~\eqref{eigenvals}, then the equilibrium $\phi$  is unstable in $\E_\alpha$.
\end{itemize}
\end{prop}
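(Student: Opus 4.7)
The plan is to reduce Proposition~\ref{P47} to Theorem~\ref{TInStable} by establishing a one-to-one correspondence between eigenvalues of $\G_\phi=\A_\phi+\B_\phi$ (with nonzero eigenvectors in $\mathrm{dom}(\A_\phi)$) and complex numbers $\lambda$ admitting a nontrivial pair $(\psi(0),\bar\psi)\in E_0\times E_0$ solving~\eqref{eigenvals}. Once this equivalence is in hand, part~{\bf (a)} follows because the stated hypothesis rules out eigenvalues of $\G_\phi$ with nonnegative real part, allowing us to invoke Theorem~\ref{TInStable}~{\bf (a)}; part~{\bf (b)} follows analogously from Theorem~\ref{TInStable}~{\bf (b)}.

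The forward direction of the equivalence is essentially already carried out in the derivation preceding the proposition. Explicitly, if $\lambda\in\sigma_p(\G_\phi)$ with eigenvector $\psi\in\mathrm{dom}(\A_\phi)\setminus\{0\}$, Theorem~\ref{IUMJT1}~{\bf (c)} together with the identification~\eqref{ell} tells us that $\psi\in C(J,E_0)$ solves~\eqref{hu1}--\eqref{hu2} in the mild sense. Variation of parameters gives~\eqref{psi0EV2}; substituting this identity into~\eqref{hu2} yields the first row of the matrix equation~\eqref{eigenvals}, while integrating it against $\varrho(a)$ yields the second. Nontriviality of $(\psi(0),\bar\psi)$ follows, since the vanishing of both entries together with~\eqref{psi0EV2} and $[\B_\phi \psi](\sigma)=-\partial m(\bar\phi,\sigma)[\bar\psi]\phi(\sigma)$ would force $\psi\equiv 0$.

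For the converse, given a nontrivial pair $(\psi(0),\bar\psi)\in E_0\times E_0$ solving~\eqref{eigenvals}, I would define the candidate eigenvector by
\begin{equation*}
\psi(a):=e^{-\lambda a}\Pi_{\phi}(a,0)\psi(0)-K_{\phi,\lambda}(a)\bar\psi\,,\qquad a\in J\,,
\end{equation*}
which lies in $C(J,E_0)$ by~\eqref{A1l}, \eqref{B2bbb}, and properties of $\Pi_\phi$. One must now verify that the symbol $\bar\psi$ appearing in this ansatz actually equals $\int_0^{a_m}\varrho(a)\psi(a)\,\rd a$ and that~\eqref{hu2} holds: both identities follow by a direct computation in which the two rows of~\eqref{eigenvals} are used precisely to cancel the terms produced by $K_{\phi,\lambda}$. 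Once these are verified, $\psi$ is a mild solution of~\eqref{hu1}--\eqref{hu2}, so Theorem~\ref{IUMJT1}~{\bf (c)} and~\eqref{ell} place it in $\mathrm{dom}(\A_\phi)=\mathrm{dom}(\G_\phi)$ with $\G_\phi\psi=\lambda\psi$; nontriviality of $\psi$ follows because $\psi\equiv 0$ would force $(\psi(0),\bar\psi)=(0,0)$, contradicting the assumption.

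The only subtle point is the verification in the converse direction that $\bar\psi=\int_0^{a_m}\varrho(a)\psi(a)\,\rd a$ is not simply notational but a genuine consequence of the matrix equation. This is a routine calculation but has to be carried out carefully since $\bar\psi$ appears both as an independent variable in the matrix equation and inside $\B_\phi\psi$ under the integral defining $K_{\phi,\lambda}$. Everything else is bookkeeping, and the conclusions of~{\bf (a)} and~{\bf (b)} are immediate consequences of Theorem~\ref{TInStable} once the equivalence is established.
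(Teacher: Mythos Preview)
Your proposal is correct and follows essentially the same route as the paper: the paper derives the equivalence between $\lambda\in\sigma_p(\G_\phi)$ and the existence of a nontrivial $(\psi(0),\bar\psi)$ solving~\eqref{eigenvals} in the discussion immediately preceding Proposition~\ref{P47}, and then invokes Theorem~\ref{TInStable}. You are in fact more explicit than the paper about the converse direction (reconstructing $\psi$ from the pair and checking that $\bar\psi$ really is $\int_0^{a_m}\varrho(a)\psi(a)\,\rd a$), which the paper leaves to the reader.
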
 

It is worth emphasizing that the (spectral radius of the) compact operator
\begin{align}\label{QQ}
Q_{\lambda}(\phi):=\int_0^{a_m}e^{-\lambda a}\,b_\phi(a)\, \Pi_{\phi}(a,0)\,\rd a\in\mathcal{K}(E_0)
\end{align}
occurring in the linear eigenvalue problem~\eqref{eigenvals} plays a particular role in the analysis. This becomes also apparent in the next section where we will focus on applications. For $\lambda=0$ one may interpret its spectral radius $r(Q_{0}(\phi))$ as the expected number of offspring per individual during its life span at equilibrium.

\section{Examples}\label{Sec5}

In order to shed some light on the previous results,  we consider  concrete examples. We impose  for simplicity stronger assumptions than actually required\footnote{In particular, the results require less regularity assumptions than imposed in \eqref{j} and also apply to any other uniformly elliptic second-order differential operator.}.
In the following, $\Omega\subset\R^n$ is a bounded domain with smooth boundary and outer unit normal~$\nu$. We consider
\begin{subequations}	\label{P99}
	\begin{align}
		\partial_t u+\partial_a u&=\mathrm{div}_x\big(d(a,x)\nabla_xu\big)-m\big(\bar u(t,x),a,x\big)u\ , \quad t>0\,, \quad  (a,x)\in (0,a_m)\times\Om\, ,\\
		u(t,0,x)&=\int_0^{a_m} b\big(\bar u(t,x),a,x\big)u(t,a,x)\,\rd a\, ,\quad t>0\, ,\quad x\in\Om\, ,\\
		\mathcal{B} u(t,a,x)&=0\ ,\quad t>0\, , \quad  (a,x)\in (0,a_m)\times\partial\Om\, ,\\
		u(0,a,x)&=u_0(a,x)\ ,\quad (a,x)\in (0,a_m)\times\Om\,,
	\end{align}
\end{subequations}
where 
$$
\mathcal{B}u:=(1-\delta)u+\delta \partial_\nu u\,,\quad \delta\in\{0,1\}\,,
$$ 
either refers to Dirichlet boundary conditions $u\vert_{\partial\Omega}=0$ if $\delta=0$ or Neumann boundary conditions  $\partial_\nu u=0$ if $\delta=1$ and 
$$
\bar u (t,x)=\int_0^{a_m} \varrho(a,x)\,u(t,a,x)\,\rd a\,,\qquad t\ge 0\,,\quad x\in\Omega\,.
$$
We set $J=[0,a_m]$ and assume for the data  that
\begin{subequations}\label{j}
\begin{align}
&q>n\,,\quad 2\alpha\in (n/q,2)\setminus\{\delta+1/q\}\,,\quad \rho>0\,,\label{j00}\\
&d\in C^{\rho,1}\big(J\times\bar\Omega,(0,\infty)\big)\,,\label{j1}\\
&m\in C^{4,\rho,2}\big(\R\times J\times\bar\Omega ,\R^+\big)\,,\label{j2a}\\
&b\in C^{4,0,2}\big(\R\times J\times\bar\Omega ,(0,\infty)\big)\,, \label{j2b}\\
&\varrho\in C^{0,2}\big(J\times\bar\Omega,\R^+\big)\,.\label{j3}
\end{align}
\end{subequations}
Note that one may choose $2\alpha=1$ in the following. We introduce $E_0:=L_q(\Omega)$ and 
$$
E_1:=W_{q,\mathcal{B}}^2(\Omega):=\{v\in W_{q}^2(\Omega)\,;\; \mathcal{B} w=0 \text{ on } \partial\Omega\}\,.
$$ 
Then $E_1$ is compactly embedded in the Banach lattice $E_0$ and, for real interpolation with $\theta\in (0,1)\setminus\{1/2\}$,
\begin{align*}
E_\theta&:=\big(L_q(\Omega),W_{q,\mathcal{B}}^2(\Omega)\big)_{\theta,q} \doteq W_{q,\mathcal{B}}^{2\theta}(\Omega)\\
&:=\left\{\begin{array}{ll} \left\{v\in W_{q}^{2\theta}(\Omega)\,;\; \mathcal{B} w=0 \text{ on } \partial\Omega\right\}\,, & \delta+1/q<2\theta\le 2\,,\\[3pt]
	 W_{q}^{2\theta}(\Omega)\,, & 0\le 2\theta<\delta+1/q\,,\end{array} \right.
\end{align*}
while, for complex interpolation with $\theta=1/2$, $$E_{1/2}:=\big[L_q(\Omega),W_{q,\mathcal{B}}^2(\Omega)\big]_{1/2}\doteq W_{q,\mathcal{B}}^1(\Omega)
\,.
$$
Setting
$$
A(a,x)w:=\mathrm{div}_x\big(d(a,x)\nabla_xw\big)\, ,\quad w\in W_{q,\mathcal{B}}^2(\Omega)\,,\quad a\in J\,,\quad x\in\Omega\,,
$$
it follows from~\eqref{j1} and e.g. \cite{AmannIsrael} that $A \in  C^\rho\big(J,\mathcal{H}\big(W_{q,\mathcal{B}}^2(\Omega),L_q(\Omega)\big)\big)$ while the maximum principle ensures that $A(a)$ is resolvent positive for each $a\in J$. Therefore,~\eqref{A1} holds. It follows from~\eqref{j00},~\eqref{j2a}, \eqref{j2b}, and \cite[Proposition 4.1]{WalkerAMPA} that
$$
[v\mapsto b(v,\cdot ,\cdot)]\,,\, [v\mapsto m(v,\cdot,\cdot )] \in C^1\big(W_{q,\mathcal{B}}^{2\alpha}(\Omega),L_\infty(J,W_{q,\mathcal{B}}^{2\eta}(\Omega))\big)\,,\quad 2\eta\in (0,2\alpha)\setminus\{ \delta+1/q\}\,,
$$
with
\begin{align*}
\big(\partial b(v,\cdot )[h]\big)(a)(x)=\partial_1 b(v(x),a,x)h(x)\,,\qquad (a,x)\in J\times \Om\,, \quad  v, h\in W_{q,\mathcal{B}}^{2\alpha}(\Omega)\,,
\end{align*}
and correspondingly for $m$.
In particular, using the continuity of pointwise multiplication  
$$
W_{q,\mathcal{B}}^{2\eta}(\Omega)\times W_{q,\mathcal{B}}^{2\alpha}(\Omega)\to L_q(\Omega)
\,,$$ 
we deduce that~\eqref{B1}-\eqref{do2} are valid and hence also~\eqref{A1b} and~\eqref{A1c}. Clearly, \eqref{j3} implies~\eqref{A1d}. Moreover, if $\phi\in \E_1=L_1\big(J,W_{q,\mathcal{B}}^2(\Omega)\big)$ is an equilibrium to~\eqref{P99}, then 
$$
\bar\phi=\int_0^{a_m} \varrho(a,\cdot)\,\phi(a,\cdot)\,\rd a\in W_{q,\mathcal{B}}^2(\Omega)
$$
due to~\eqref{j3}, hence $b(\bar\phi,\cdot,\cdot) \in L_\infty\big(J, W_{q,\mathcal{B}}^2(\Omega)\big)$. Since pointwise multiplication 
$$
W_{q,\mathcal{B}}^{2}(\Omega)\times W_{q,\mathcal{B}}^{2\alpha}(\Omega)\to W_{q,\mathcal{B}}^{2\alpha}(\Omega)
$$ 
is continuous \cite{AmannMultiplication}, we deduce~\eqref{B3}. Moreover, since $\partial_1 b(\bar\phi,\cdot,\cdot)\in L_\infty\big(J,W_{q,N}^{2-\ve}(\Omega)\big)$ for every \mbox{$\ve>0$} small and since pointwise multiplication $$W_{q,N}^{2-\ve}(\Omega)\times W_{q,N}^{2\theta}(\Omega)\to W_{q,N}^{2\theta}(\Omega)$$ is continuous for $\theta=0,\alpha$, we obtain~\eqref{B2b} and similarly~\eqref{B2bbb}. Clearly, \eqref{j2a} implies~\eqref{B2bb}. 
Consequently, assumptions~\eqref{A1},~\eqref{A11}, and~\eqref{B} are all satisfied owing to~\eqref{j}.

Recall for $(a,x)\in J\times\Omega$ that
$$
A_\phi(a,x)w=\mathrm{div}_x\big(d(a,x)\nabla_xw\big)-m(\bar\phi(x),a,x)w\, ,\quad w\in W_{q,\mathcal{B}}^2(\Omega)\,,
$$
and
$$
b_\phi(a,x)= b(\bar \phi(x),a,x) +\int_0^{a_m}  \partial_1 b(\bar \phi(x),\sigma,x)\, \phi(\sigma,x)\,\rd \sigma\, \varrho(a,x) \,.
$$
Moreover,
$$
(\B_\phi \psi)(a,x)=-\partial_1 m(\bar\phi(x),a,x)\,\phi(a,x)\int_0^{a_m}\varrho(\sigma,x)\,\psi(\sigma,x)\,\rd \sigma\,,\quad (a,x)\in J\times\Omega\,.
$$

\subsection*{The Trivial Equilibrium}

For the particular case of the trivial equilibrium $\phi=0$, we observe that (with dot referring to the suppressed $x$-variable)
$$
b_0(a)=b(0,a,\cdot)\,,\qquad A_0(a)=A(a)-m(0,a,\cdot)\,.
$$ 
Then $v(a,\cdot)=\Pi_0(a,0)v_0$, $a\in J$,  is for each $v_0\in L_q(\Omega)$ the unique strong solution to the heat equation
$$
\partial_a v=\mathrm{div}_x\big(d(a,x)\nabla_xv\big)-m(0,a,x)v\,,\quad (a,x)\in J\times \Omega\,,\qquad v(0,x)=v_0(x)\,,\quad x\in\Omega\,,
$$
subject to Dirichlet boundary conditions if $\delta=0$ or Neumann boundary conditions if $\delta=1$.
Also note that $\B_0=0$ and $K_{0,\lambda}=0$ in \eqref{KL}.
The eigenvalue equation \eqref{eigenvals} then reduces to 
\begin{align*}
(1-Q_{\lambda}(0))\psi(0)&= 0\,,
\end{align*}
where
\begin{align*}
Q_{\lambda}(0)=
\int_0^{a_m}e^{-\lambda a}\,b(0,a,\cdot)\, \Pi_{0}(a,0)\,\rd a
\end{align*}
is a strongly positive compact operator on $L_q(\Omega)$ for $\lambda\in\R$ due to \cite[Corollary~13.6]{DanersKochMedina} and the strict positivity of $b(0,\cdot)$  assumed in~\eqref{j2b}. As for its spectral radius $r(Q_{\lambda}(0))$ we note that the mapping $$\R\to (0,\infty)\,,\quad \lambda\mapsto r(Q_{\lambda}(0))$$  is continuous and strictly decreasing with 
$$
\lim_{\lambda\to-\infty}r(Q_{\lambda}(0))=\infty\,,\qquad \lim_{\lambda\to\infty}r(Q_{\lambda}(0))=0
$$
 according to \cite[Lemma~3.1]{WalkerIUMJ}. Thus, there is a unique $\lambda_0\in\R$ such that $r(Q_{\lambda_0}(0))=1$. In fact, it follows from \cite[Proposition~4.2]{WalkerIUMJ} that $\lambda_0=s(\G_0)$; that is, $\lambda_0$ coincides with the spectral bound of the generator $\G_0=\A_0$ (see Proposition~\ref{IUMJprop}).

Consequently, we can state the  stability of the trivial equilibrium according to Proposition~\ref{P47} as follows:
	
\begin{prop}\label{P21}
Assume \eqref{j}. Then:
\begin{itemize}
\item[\bf (a)] If $r(Q_{0}(0))<1$, then the trivial equilibrium $\phi=0$ to~\eqref{P99} is exponentially asymptotically stable in the phase space~$L_1\big(J,W_{q,\mathcal{B}}^{2\alpha}(\Omega)\big)$.

\item[\bf (b)] If $r(Q_{0}(0))>1$, then the trivial equilibrium $\phi=0$ to~\eqref{P99} is unstable in the phase space~$L_1\big(J,W_{q,\mathcal{B}}^{2\alpha}(\Omega)\big)$.
\end{itemize}
\end{prop}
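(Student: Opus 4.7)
The strategy is to specialize Proposition~\ref{P47} (equivalently Theorem~\ref{TInStable}) to the trivial equilibrium $\phi=0$ and then translate the dichotomy $\mathrm{Re}\,\lambda < 0$ vs.\ $\mathrm{Re}\,\lambda > 0$ for eigenvalues into the single scalar condition on $r(Q_0(0))$ stated in the proposition. The discussion preceding the proposition already verifies that~\eqref{A1}, \eqref{A11}, and~\eqref{B} all follow from~\eqref{j}, so Proposition~\ref{P47} is at our disposal.

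The first step is to reduce the $2\times 2$ system~\eqref{eigenvals} to a scalar equation for $\psi(0)$. At $\phi=0$ the perturbation $\mathcal{B}_0$ defined in~\eqref{BBell} vanishes identically, and the kernel $K_{0,\lambda}$ from~\eqref{KL} vanishes for the same reason. The off-diagonal coupling in~\eqref{eigenvals} thus disappears and the bottom-right block collapses to the identity; what remains is
\begin{equation*}
\big(1-Q_\lambda(0)\big)\,\psi(0)=0\,,\qquad \bar\psi=\int_0^{a_m}e^{-\lambda a}\,\varrho(a,\cdot)\,\Pi_0(a,0)\,\rd a\,\psi(0)\,,
\end{equation*}
so a nontrivial eigenvector $(\psi(0),\bar\psi)$ exists if and only if $1\in\sigma_p(Q_\lambda(0))$.

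The second step uses the spectral analysis of $Q_\lambda(0)$ already assembled in the paragraphs preceding the proposition: the strict positivity of $b(0,\cdot,\cdot)$ from~\eqref{j2b} combined with~\cite[Corollary~13.6]{DanersKochMedina} makes $Q_\lambda(0)\in\mathcal{K}(L_q(\Omega))$ strongly positive for each real $\lambda$; the map $\lambda\mapsto r(Q_\lambda(0))$ on $\mathbb{R}$ is continuous, strictly decreasing, tending to $+\infty$ at $-\infty$ and to $0$ at $+\infty$ by~\cite[Lemma~3.1]{WalkerIUMJ}; and Proposition~\ref{IUMJprop}, applied with $A_\ell=A_0$ and $b_\ell=b(0,\cdot,\cdot)$, identifies the unique real zero $\lambda_0$ of $r(Q_{\cdot}(0))-1$ with the spectral bound $s(\mathcal{G}_0)=s(\mathcal{A}_0)$ (note $\mathcal{B}_0=0$, hence $\mathcal{G}_0=\mathcal{A}_0$). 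To invoke Proposition~\ref{IUMJprop} we just need $b(0,a,\cdot)\,\Pi_0(a,0)$ to be strongly positive on a set of positive measure, which follows from~\eqref{j2b} and the strong positivity of the parabolic evolution operator~$\Pi_0(a,0)$ for $a>0$.

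The last step is a comparison. If $r(Q_0(0))<1=r(Q_{\lambda_0}(0))$, strict monotonicity forces $\lambda_0<0$, hence $s(\mathcal{G}_0)<0$; every $\lambda\in\sigma(\mathcal{G}_0)=\sigma_p(\mathcal{G}_0)$ then has negative real part, and part~\textbf{(a)} follows from Proposition~\ref{P47}~\textbf{(a)}. If $r(Q_0(0))>1$, the same monotonicity yields $\lambda_0>0$; since $s(\mathcal{G}_0)=\lambda_0\in\mathbb{R}$ is finite, the positivity statement of Theorem~\ref{T:NormCont} guarantees that $\lambda_0$ is attained as an eigenvalue of $\mathcal{G}_0$, and Proposition~\ref{P47}~\textbf{(b)} gives~\textbf{(b)}. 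No real obstacle arises in this plan; the heavy lifting---eventual compactness of the linearization semigroup and the identity $s(\mathcal{G}_\phi)=\omega_0(\mathcal{G}_\phi)$---was done in Sections~\ref{Sec3}--\ref{Sec4}, and the only subtle check is that the reduction to the scalar equation in $r(Q_\lambda(0))$ is carried out cleanly and that positivity is used to secure $\lambda_0\in\sigma_p(\mathcal{G}_0)$ in case~\textbf{(b)}.
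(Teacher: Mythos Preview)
Your proposal is correct and follows essentially the same approach as the paper: both reduce the eigenvalue problem~\eqref{eigenvals} at $\phi=0$ to the scalar equation $(1-Q_\lambda(0))\psi(0)=0$ via $\B_0=0$ and $K_{0,\lambda}=0$, then use the strict monotonicity of $\lambda\mapsto r(Q_\lambda(0))$ and the identification $\lambda_0=s(\G_0)$ from Proposition~\ref{IUMJprop} to convert the sign of $\lambda_0$ into the dichotomy on $r(Q_0(0))$, and finally invoke Proposition~\ref{P47}. The paper presents these steps in the discussion preceding the proposition rather than as a formal proof, but the content is the same.
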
 

Proposition~\ref{P21xx} is the special case of Proposition~\ref{P21} with $\alpha=1/2$  and $x$-independent vital rates~$m\in C^{4,\rho}\big(\R\times J,(0,\infty)\big)$ and $b\in C^{4,0}\big(\R\times J ,(0,\infty)\big)$, noticing that in this case
$$
\Pi_0(a,0)=\exp\left(-\int_0^a m(0,\sigma)\,\rd \sigma\right)\, \Pi_*(a,0)\,,
$$
where $v(a,\cdot)=\Pi_*(a,0)v_0$, $a\in J$,  is for given $v_0\in L_q(\Omega)$ the unique strong solution to the heat equation
$$
\partial_a v=\mathrm{div}_x\big(d(a,x)\nabla_xu\big)\,,\quad (a,x)\in J\times \Omega\,,\qquad v(0,x)=v_0(x)\,,\quad x\in\Omega\,,
$$
subject to Dirichlet boundary conditions if $\delta=0$ or Neumann boundary conditions if $\delta=1$.\\

It is worth pointing out for the case of $x$-independent vital rates~$m$ and $b$ and Neumann boundary conditions $\mathcal{B}=\partial_\nu$ (i.e. $\delta=1$) that the constant function $\mathbf{1}:=[x\mapsto 1]$ belongs to $W_{q,N}^2(\Omega)$ (with subscript~$N$ referring to the Neumann boundary conditions) and satisfies $\Pi_*(a,0)\mathbf{1}=\mathbf{1}$. Therefore,
$$
Q_0(0)\mathbf{1}=  \int_0^{a_m} b(0,a)\, e^{-\int_0^a m(0,s)\rd s}\,\rd a\,\mathbf{1}
$$
so that $\mathbf{1}$ is a positive eigenfunction of the strongly positive compact operator $Q_0(0)$.  Krein-Rutman's theorem (e.g., see \cite[Theorem 12.3]{DanersKochMedina}) implies  
$$
r(Q_0(0))= \int_0^{a_m} b(0,a)\, e^{-\int_0^a m(0,s)\rd s}\,\rd a\,.
$$
Consequently, we obtain from Proposition~\ref{P21}:

\begin{cor}\label{CC}
Assume \eqref{j} with  $x$-independent vital rates~$m=m(\bar u,a)$ and $b=b(\bar u,a)$ and $\delta=1$ (case of Neumann boundary conditions). Set
$$
r_0:=\int_0^{a_m} b(0,a)\, e^{-\int_0^a m(0,s)\rd s}\,\rd a\,.
$$
\item[\bf (a)] If $r_0<1$, then the trivial equilibrium $\phi=0$ to~\eqref{P99} is exponentially asymptotically stable in~$L_1\big(J,W_{q,N}^{2\alpha}(\Omega)\big)$.

\item[\bf (b)] If $r_0>1$, then the trivial equilibrium $\phi=0$ to~\eqref{P99} is unstable in~$L_1\big(J,W_{q,N}^{2\alpha}(\Omega)\big)$.
\end{cor}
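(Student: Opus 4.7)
The plan is to reduce Corollary~\ref{CC} directly to Proposition~\ref{P21} by computing the spectral radius $r(Q_0(0))$ of the compact operator
\[
Q_0(0)=\int_0^{a_m} b(0,a,\cdot)\,\Pi_0(a,0)\,\rd a\in\ml(L_q(\Omega))
\]
in the present Neumann setting and showing that it equals $r_0$. Since the vital rates are $x$-independent, one already knows from the discussion preceding Corollary~\ref{CC} that
\[
\Pi_0(a,0)=\exp\!\Big(-\!\int_0^a m(0,\sigma)\,\rd\sigma\Big)\,\Pi_*(a,0),
\]
where $\Pi_*$ is the evolution operator of the pure diffusion problem $\partial_a v=\divv(d(a,x)\nabla_x v)$ under the boundary condition $\mathcal{B}v=0$.

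The key step is to exhibit a positive eigenvector. Under Neumann boundary conditions $\delta=1$, the constant function $\mathbf{1}:=[x\mapsto 1]$ belongs to $W_{q,N}^2(\Omega)$ and satisfies $\partial_a \mathbf{1}=0=\divv(d(a,x)\nabla_x \mathbf{1})$, so $\Pi_*(a,0)\mathbf{1}=\mathbf{1}$ for every $a\in J$. Plugging this into $Q_0(0)$ and using the $x$-independence of $b(0,\cdot)$ and $m(0,\cdot)$, I compute
\[
Q_0(0)\mathbf{1}=\int_0^{a_m} b(0,a)\,\exp\!\Big(-\!\int_0^a m(0,s)\,\rd s\Big)\,\rd a\;\mathbf{1}=r_0\,\mathbf{1}.
\]
Hence $r_0$ is an eigenvalue of $Q_0(0)$ with strictly positive eigenfunction $\mathbf{1}$.

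It remains to upgrade this to the identity $r(Q_0(0))=r_0$. Since $b(0,\cdot)>0$ by \eqref{j2b}, the operator $Q_0(0)$ is strongly positive and compact on $L_q(\Omega)$ (as already noted in the text preceding Proposition~\ref{P21}). The Krein--Rutman theorem in the form of \cite[Theorem~12.3]{DanersKochMedina} then asserts that the only eigenvalue of such an operator admitting a positive eigenvector is its spectral radius; consequently $r(Q_0(0))=r_0$. Substituting this into Proposition~\ref{P21} yields exponential asymptotic stability when $r_0<1$ and instability when $r_0>1$, which is precisely statements~(a) and~(b) of the corollary.

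I do not anticipate any real obstacle here, since the proof is essentially a computation plus a single invocation of Krein--Rutman; the only minor point to be careful about is the regularity check that $\mathbf{1}\in W_{q,N}^2(\Omega)$, which indeed holds because the Neumann trace $\partial_\nu \mathbf{1}\equiv 0$ on $\partial\Omega$. For Dirichlet boundary conditions this argument would fail (constants are not in the domain), which is exactly why Corollary~\ref{CC} is restricted to $\delta=1$.
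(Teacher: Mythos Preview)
Your proposal is correct and follows essentially the same approach as the paper: the paper also observes that $\mathbf{1}\in W_{q,N}^2(\Omega)$ with $\Pi_*(a,0)\mathbf{1}=\mathbf{1}$, computes $Q_0(0)\mathbf{1}=r_0\,\mathbf{1}$, invokes Krein--Rutman (citing \cite[Theorem~12.3]{DanersKochMedina}) to identify $r(Q_0(0))=r_0$, and then applies Proposition~\ref{P21}.
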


In particular, if the death rate dominates the birth rate in the sense that
$$
b(0,a)\le m(0,a)\,,\quad a\in J\,,
$$
then
$$
r_0=\int_0^{a_m} b(0,a)\, e^{-\int_0^a m(0,s)\rd s}\,\rd a \le \int_0^{a_m} m(0,a)\, e^{-\int_0^a m(0,s)\rd s}\,\rd a=1-e^{-\int_0^{a_m} m(0,s)\rd s} <1\,.
$$
Hence, the trivial equilibrium is stable.\\

We also remark the following result on \textit{global} stability of the trivial solution in the special case of Corollary~\ref{CC}~{\bf (a)}. It is the analogue to the non-diffusive case from \cite[Theorem~4]{Pruess83}.

\begin{cor}\label{CCC}
Assume \eqref{j} with $\delta=1$ (case of Neumann boundary conditions) and assume that there are $ b_*\in C(J,(0,\infty))$ and $m_*\in C^\rho(J,\R^+)$ such that
\begin{subequations}
\begin{align}\label{k12}
b(r,a,x)\le b_*(a)\,,\quad m(r,a)\ge m_*(a)\,,\quad (r,a,x)\in \R\times J\times \bar\Omega\,,
\end{align}
and
\begin{align}\label{k2}
\int_0^{a_m} b_*(a)\, e^{-\int_0^a m_*(s)\rd s}\,\rd a<1\,.
\end{align}
Moreover, assume that there is $C_*>0$ such that
\begin{align}\label{k3}
\vert \partial_1 b(r,a,x)\vert +\vert m(r,a,x)\vert+\vert \partial_1 m(r,a,x)\vert\le C_*\,,\quad (r,a)\in\R\times J\times\bar\Omega\,.
\end{align}
\end{subequations}
Then the maximal solution $u(\cdot; u_0)$ to~\eqref{P99} exists globally for each $u_0\in L_1(J,W_{q}^{1}(\Omega))$ with $u_0\ge 0$ and \mbox{$u(t;u_0)\to 0$} in $L_1(J,L_q(\Omega))$ as $t\to\infty$.
\end{cor}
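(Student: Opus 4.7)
The plan is to bound the nonlinear solution $u$ pointwise from above by a linear majorant $\tilde u$ whose coefficients $b_*(a)$ and $m_*(a)$ are independent of $\bar u$ and for which Corollary~\ref{CC} (more precisely its linear analogue provided by Proposition~\ref{IUMJprop} together with Corollary~\ref{C:Ealpha}) yields exponential decay. Concretely, I would introduce
\begin{align*}
	\partial_t \tilde u + \partial_a \tilde u &= \divv\bigl(d(a,x)\nabla_x \tilde u\bigr) - m_*(a)\tilde u\,,\\
	\tilde u(t,0,x) &= \int_0^{a_m} b_*(a)\,\tilde u(t,a,x)\,\rd a\,,\qquad \partial_\nu \tilde u=0\,,\qquad \tilde u(0)=u_0\,,
\end{align*}
which fits Section~\ref{Sec3} with $A_\ell(a)=A(a)-m_*(a)$ and $b_\ell(a)=b_*(a)\cdot$. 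By Theorem~\ref{IUMJT1} the associated semigroup on $\E_0$ is positive, since $A_\ell(a)$ is resolvent positive (Neumann Laplacian, maximum principle) and $m_*,b_*\ge 0$. Because $\Pi_\ell(a,0)\mathbf{1}=\exp(-\int_0^a m_*(s)\,\rd s)\,\mathbf{1}$, the constant $\mathbf{1}$ is a positive eigenfunction of $Q_0=\int_0^{a_m} b_*(a)\Pi_\ell(a,0)\,\rd a$ with eigenvalue equal to the left-hand side of~\eqref{k2}. Krein--Rutman thus gives $r(Q_0)<1$, and Proposition~\ref{IUMJprop} together with Corollary~\ref{C:Ealpha} produce $\omega>0$, $M\ge 1$ with $\|\tilde u(t)\|_{\E_\alpha}\le Me^{-\omega t}\|u_0\|_{\E_\alpha}$ for all $t\ge 0$.

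Second, I would prove the comparison $0\le u(t;u_0)\le \tilde u(t;u_0)$ on $I(u_0)$. Nonnegativity of $u$ is already Proposition~\ref{T1JDE}. Setting $w:=\tilde u-u$, subtracting equations and rearranging yields $w(0)=0$, Neumann boundary in $x$, and
\begin{align*}
	\partial_t w+\partial_a w&=A_\ell(a)w+\bigl[m(\bar u,a,x)-m_*(a)\bigr]u\,,\\
	w(t,0,x)&=\int_0^{a_m} b_*(a)w(t,a,x)\,\rd a+\int_0^{a_m}\bigl[b_*(a)-b(\bar u,a,x)\bigr]u\,\rd a\,,
\end{align*}
where both source terms are nonnegative by~\eqref{k12} and $u\ge 0$. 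Representing $w$ via the Volterra--Duhamel formula associated with the linear problem (the age-integrated analogue of~\eqref{1000} for the data $A_\ell,b_*$), every building block --- the evolution operator $\Pi_\ell$, the linear semigroup applied to nonnegative data, and the Volterra convolution against $b_*\cdot\Pi_\ell(\cdot,0)$ --- is positivity preserving, so $w\ge 0$.

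For global existence, the comparison provides a bound on $\|u(t)\|_{\E_0}$ on $I(u_0)$. Together with the uniform bound $|m|\le C_*$ from~\eqref{k3}, the term $F(u)=-m(\bar u,\cdot)u$ is controlled in $\E_0$ on $I(u_0)$, and the parabolic smoothing $\|\Pi(a,\sigma)\|_{\ml(E_0,E_\alpha)}\lesssim(a-\sigma)^{-\alpha}$ applied to~\eqref{u}--\eqref{5000} yields, via a weakly singular Gronwall argument, a bound on $\|u(t)\|_{\E_\alpha}$ on every bounded subinterval of $I(u_0)$. The blow-up alternative of Proposition~\ref{T1JDE} then forces $T_{max}(u_0)=\infty$. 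Combining $0\le u(t)\le \tilde u(t)$ in $\E_0$ with the exponential decay of $\tilde u$ in $\E_\alpha\hookrightarrow\E_0$ gives $u(t;u_0)\to 0$ in $L_1(J,L_q(\Omega))$.

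The main obstacle is the comparison step: the renewal boundary at $a=0$ is nonlocal in age and couples with the spatial diffusion in a way that a pointwise PDE argument is awkward, so I would reduce it to positivity of the mild/Volterra representation of the linear $w$-problem, where Theorem~\ref{IUMJT1} handles both the renewal and the diffusion features simultaneously. The $\E_0$-to-$\E_\alpha$ bootstrap for global existence is standard parabolic regularization, but requires care with the weakly singular kernel $(a-\sigma)^{-\alpha}$ and the use of~\eqref{k3} to close the Gronwall estimate.
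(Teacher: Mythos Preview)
Your proposal is correct and follows essentially the same approach as the paper: introduce the linear comparison system with data $A_\ell=A-m_*$ and $b_\ell=b_*$, use the Neumann eigenfunction $\mathbf{1}$ and Krein--Rutman together with Proposition~\ref{IUMJprop} to get a negative growth bound, establish $0\le u(t)\le \mS(t)u_0$ via positivity of the mild/Volterra representation (the paper writes this as the representation~\eqref{ui} for $u$ with nonpositive source $f$ and $W_{0,0}^{0,h}$, which is equivalent to your $w=\tilde u-u$ formulation), and then close global existence by a Gronwall argument in $\E_{1/2}$ using~\eqref{k3}. The only cosmetic difference is that the paper bounds the correction terms $f,h$ directly in $W_q^1$ via~\eqref{k3} and applies Gronwall in $\E_{1/2}$, rather than bootstrapping from an $\E_0$ bound on $F(u)$ through smoothing as you outline.
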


\begin{proof}
Note that
$$
A_\ell(a,x)w:=\mathrm{div}_x\big(d(a,x)\nabla_xw\big)-m_*(a)w\, ,\quad w\in W_{q,N}^2(\Omega)\,,\quad (a,x)\in J\times\Omega\,,
$$
and $b_\ell:=b_*$
satisfy \eqref{A1l} and \eqref{A2l}. We then denote by $(\mS(t))_{t\ge 0}$ the corresponding positive semigroup  on $\E_0=L_1(J,L_q(\Omega))$ defined in \eqref{100} for these $A_\ell$, $b_\ell$ (see Theorem~\ref{IUMJT1}). As in the proof of Corollary~\ref{CC}, supposition~\eqref{k2} implies that the semigroup $(\mS(t))_{t\ge 0}$ has a negative growth bound $\omega_0<0$ (see Proposition~\ref{IUMJprop}).

Let $u_0\in \E_{1/2}=L_1(J,W_{q}^{1}(\Omega))$ with $u_0\ge 0$ be arbitrary and $u(\cdot;u_0)\in C(I(u_0),\E_{1/2})$ be the maximal, positive solution to problem~\eqref{P99} (guaranteed by Proposition~\ref{T1JDE}). We simply write $u=u(\cdot;u_0)$  and note that $u$ satisfies (in a mild sense)
	\begin{align*}
		\partial_t u+\partial_a u&=A_\ell(a) + f(t,a,x)\ , && t\in I(u_0)\, , &  a\in (0,a_m)\, ,& & x\in\Om\, ,\\
		u(t,0,x)&=\int_0^{a_m} b_\ell(a)u(t,a,x)\,\rd a +h(t,x)\, ,& & t\in I(u_0)\, , & & & x\in\Om\, ,\\
		\mathcal{B} u(t,a,x)&=0\ ,& & t\in I(u_0)\, , &  a\in (0,a_m)\, ,& & x\in\partial\Om\, ,
	\end{align*}
where we have introduced the negative functions $f\in C(I(u_0),\E_0)$ and $h\in C(I(u_0),E_0)$ by
	\begin{align*} 
f(t,a,x)&:=\big(m_*(a)-m(\bar u(t,x),a,x)\big)u(t,a,x)\le 0\,,\\ 
h(t,x)&:=\int_0^{a_m} \big(b(\bar u(t,x),a,x)-b_\ell(a)\big)\, u(t,a,x)\,\rd a \le 0 \,.
	\end{align*}
Here, in a mild sense means that $u$ satisfies
\begin{equation}\label{ui}
u(t)=\mS(t)u_0+\int_0^t \mS(t-s) f(s)\,\rd s+ W_{0,0}^{0,h}(t,\cdot)\,,\quad t\in I(u_0)\, ,
\end{equation}
 according to \cite[Corollary~5.8]{WalkerZehetbauerJDE}, where $W_{0,0}^{0,h} \in C(I(u_0),\E_0)$ stems from Lemma~\ref{LemmaF}. Since $f(t,\cdot,\cdot)\le 0$ and $W_{0,0}^{0,h}(t,\cdot)\le 0$ for $t\in I(u_0)$ due to~\eqref{k12} and~\eqref{49998}, it follows from~\eqref{ui}  and the positivity of $(\mS(t))_{t\ge 0}$ that
$$
0\le u(t)\le \mS(t)u_0\,,\quad t\in I(u_0)\,,
$$
in the Banach lattice $\E_0=L_1(J,L_q(\Omega))$. Therefore,
\begin{equation}\label{uii}
\| u(t)\|_{\E_0}\le \|\mS(t)\|_{\ml(\E_0)}\,\|u_0\|_{\E_0}\le c_0\, e^{\omega_0 t}\,\|u_0\|_{\E_0}\,,\quad t\in I(u_0)\,.
\end{equation}
Assumption~\eqref{k3} ensures that there is a constant $C_1>0$ such that
\begin{equation}\label{ui2i}
\|f(t)\|_{L_1(J,W_{q}^{1}(\Omega))} + \|h(t)\|_{W_{q}^{1}(\Omega)}\le C_1\big(1+\|u (t)\|_{L_1(J,W_{q}^{1}(\Omega))}\big)\,,\quad t\in I(u_0)\,,
\end{equation}
while \eqref{E3} yields for every $T>0$ a constant $c(T)>0$ such that
\begin{equation}\label{ui22i}
\|\mS(t)\|_{\ml(L_1(J,W_{q}^{1}(\Omega)))}\le c(T)\,,\quad t\in [0,T]\,.
\end{equation}
It then readily follows from \eqref{ui},~\eqref{ui2i},~\eqref{ui22i}, Lemma~\ref{LemmaF}, and Gronwall's inequality that
$$
\|u(t)\|_{L_1(J,W_{q}^{1}(\Omega))}\le c_1(T)\,,\quad t\in I(u_0)\cap [0,T]\,,
$$
for every $T>0$. Proposition~\ref{T1JDE} now implies that the solution $u$ exists globally, i.e. \mbox{$I(u_0)=[0,\infty)$}. Consequently, we may let $t\to \infty$ in~\eqref{uii} and use $\omega_0<0$ to conclude that $u(t;u_0)\to 0$ in the phase space~$\E_0=L_1(J,L_q(\Omega))$.
\end{proof}

Corollary~\ref{CC} is not  restricted to the particular case of Neumann boundary conditions (just  replace the left-hand side of~\eqref{k2} by the corresponding spectral radius).

\subsection*{An Instability Result}

We provide the analogue to \cite[Theorem~6]{Pruess83}:

\begin{prop}\label{P50}
Assume~\eqref{j}. Consider a positive equilibrium 
$$
\phi\in L_1\big(J,W_{q,\mathcal{B}}^{2}(\Omega)\big) \cap C(J,W_{q,\mathcal{B}}^{2\alpha}(\Omega))\,,\quad \phi\ge 0\,,
$$
to \eqref{P99} for which 
\begin{equation}\label{pos}
\partial_1 b(\bar\phi(x),a,x)\ge 0\,,\quad \partial_1 m(\bar\phi(x),a,x)\le 0\,,\qquad (a,x)\in J\times\Omega\,.
\end{equation}
If $s(\G_\phi)\not=0$, then $\phi$ is unstable in~$L_1\big(J,W_{q,\mathcal{B}}^{2\alpha}(\Omega)\big)$.
\end{prop}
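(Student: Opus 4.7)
My plan is to invoke Theorem~\ref{TInStable}~(b) by exhibiting an eigenvalue of $\G_\phi$ with strictly positive real part. Under~\eqref{pos} the operator $b_\phi$ is a positive-operator-valued function (both $b(\bar\phi,a)\geq 0$ and the correction term in~\eqref{bell} are $\geq 0$, since $\partial_1 b(\bar\phi,\cdot)\geq 0$), and the perturbation $\B_\phi\zeta=-\partial m(\bar\phi,\cdot)[\bar\zeta]\phi$ is positive on $\E_0$ because $-\partial_1 m(\bar\phi,\cdot)\geq 0$ and $\phi\geq 0$. Hence the last clause of Theorem~\ref{T:NormCont} applies: $(\T_\phi(t))_{t\ge 0}$ is a positive eventually compact semigroup, and since $s(\G_\phi)>-\infty$ the spectral bound $s(\G_\phi)$ is itself an eigenvalue of $\G_\phi$. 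It therefore suffices to establish $s(\G_\phi)\geq 0$, because combined with $s(\G_\phi)\neq 0$ this forces $s(\G_\phi)>0$.

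To produce a nonnegative real eigenvalue I use the matrix characterization~\eqref{eigenvals}, written as $I-N_\lambda$ with
\begin{equation*}
N_\lambda:=\begin{pmatrix}
Q_\lambda(\phi) & -\int_0^{a_m}b_\phi(a)K_{\phi,\lambda}(a)\,\rd a\\[4pt]
\int_0^{a_m}e^{-\lambda a}\varrho(a)\Pi_\phi(a,0)\,\rd a & -\int_0^{a_m}\varrho(a)K_{\phi,\lambda}(a)\,\rd a
\end{pmatrix}.
\end{equation*}
Under~\eqref{pos} each entry is a positive operator on $E_0$, because $-K_{\phi,\lambda}(a)$ is positive (by the sign of $\partial_1 m(\bar\phi,\cdot)$ and of $\phi$), and each entry is compact thanks to the smoothing $\Pi_\phi(a,0)\colon E_0\to E_1\dhr E_0$ for $a>0$. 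Thus $N_\lambda$ is a positive compact operator on the product lattice $E_0\times E_0$, it depends norm-continuously on $\lambda\in\R$, decreases entrywise as $\lambda$ grows, and $\|N_\lambda\|\to 0$ as $\lambda\to+\infty$; consequently $\lambda\mapsto r(N_\lambda)$ is continuous, nonincreasing, and tends to $0$ at $+\infty$. Testing $N_0$ on $(\phi(0),\bar\phi)$ and using $\phi(a)=\Pi_\phi(a,0)\phi(0)$, the equilibrium identity $\phi(0)=\int_0^{a_m}b(\bar\phi,a)\phi(a)\,\rd a$ together with Fubini gives
\begin{equation*}
Q_0(\phi)\phi(0)=\phi(0)+\int_0^{a_m}\partial b(\bar\phi,\sigma)[\bar\phi]\,\phi(\sigma)\,\rd\sigma\,\ge\,\phi(0)\,,
\end{equation*}
and the analogous computation yields $\int_0^{a_m}\varrho(a)\Pi_\phi(a,0)\phi(0)\,\rd a=\bar\phi$. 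Combined with the nonnegativity of $-\int b_\phi K_{\phi,0}\bar\phi$ and $-\int\varrho K_{\phi,0}\bar\phi$, this gives $N_0(\phi(0),\bar\phi)\ge(\phi(0),\bar\phi)$ in $E_0\times E_0$, with $(\phi(0),\bar\phi)$ positive and nontrivial (for a nontrivial positive equilibrium $\phi(0)\neq 0$, otherwise $\phi\equiv 0$).

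The standard Neumann-series contradiction (if $r(N_0)<1$ then $(I-N_0)^{-1}=\sum N_0^n\ge 0$ and applied to $(N_0-I)(\phi(0),\bar\phi)\ge 0$ would force $(\phi(0),\bar\phi)\le 0$) then gives $r(N_0)\geq 1$, so by continuity there is $\lambda_0\geq 0$ with $r(N_{\lambda_0})=1$. Krein--Rutman applied to the compact positive $N_{\lambda_0}$ supplies a nontrivial $\ge 0$ eigenvector $(\psi(0),\bar\psi)$ for eigenvalue $1$, which is exactly a nontrivial solution of~\eqref{eigenvals} at $\lambda=\lambda_0$. Thus $\lambda_0\in\sigma_p(\G_\phi)$ and $s(\G_\phi)\geq\lambda_0\geq 0$, finishing the reduction. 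The main technical obstacle will be the careful bookkeeping for the inequality $N_0(\phi(0),\bar\phi)\ge(\phi(0),\bar\phi)$: one has to unwind the definition of $b_\phi$ in~\eqref{bell}, which couples $b(\bar\phi,\cdot)$ with the $\varrho$-weighted rank-one-in-$a$ term built from $\partial b(\bar\phi,\cdot)\phi$, and to justify the Fubini interchange in the abstract spaces using $\phi\in\E_1\cap C(J,E_\alpha)$ and~\eqref{A1d}. Verifying compactness of $N_\lambda$ and that the Krein--Rutman hypotheses are met on the product cone of $E_0\times E_0$ are comparatively routine.
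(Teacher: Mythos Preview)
Your approach is correct and genuinely different from the paper's. Both arguments reduce the task to showing $s(\G_\phi)\ge 0$ (so that $s(\G_\phi)\ne 0$ forces $s(\G_\phi)>0$, and Theorem~\ref{T:NormCont} together with Theorem~\ref{TInStable}~(b) finish the job), but they reach this conclusion by different mechanisms. The paper works directly with the resolvent and the Euler formula: for large real $\lambda$ it sets $v:=(\lambda-\G_\phi)^{-1}\phi\ge 0$, and by inserting the mild-solution formula for $v$ and the equilibrium identities $\phi(a)=\Pi_\phi(a,0)\phi(0)$, $\phi(0)=\int_0^{a_m} b(\bar\phi,a)\phi(a)\,\rd a$, it obtains the pointwise bound $\lambda v\ge\phi$, hence $\lambda(\lambda-\G_\phi)^{-1}\phi\ge\phi$, hence $e^{t\G_\phi}\phi\ge\phi$ and $\omega_0(\G_\phi)\ge 0$. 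This is short and stays entirely within order-theoretic semigroup reasoning, with no spectral-radius continuity needed. Your route instead exploits the matrix reformulation~\eqref{eigenvals}: you test the compact positive block operator $N_0$ on the explicit vector $(\phi(0),\bar\phi)$, obtain $N_0(\phi(0),\bar\phi)\ge(\phi(0),\bar\phi)$ from the same equilibrium identities, deduce $r(N_0)\ge 1$, and then locate $\lambda_0\ge 0$ with $r(N_{\lambda_0})=1$ so that Krein--Rutman produces a nontrivial solution of~\eqref{eigenvals}. This makes the connection to Proposition~\ref{P47} very explicit and is close in spirit to how Corollary~\ref{C51} is argued, at the cost of additional bookkeeping (compactness of $N_\lambda$ on the product lattice, continuity and monotonicity of $\lambda\mapsto r(N_\lambda)$). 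Note that both proofs tacitly require $\phi\not\equiv 0$, since otherwise neither $e^{t\G_\phi}\phi\ge\phi$ nor $(\phi(0),\bar\phi)\ne 0$ carries information.
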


\begin{proof}
{\bf (i)} Observe that \eqref{j2b}, \eqref{j3}, \eqref{pos}, and the positivity of $\phi$  entail the strict positivity $b_\phi> 0$ and that $\B_\phi$ is a positive operator on $\E_0=L_1(J,L_q(\Omega))$. Moreover, the maximum principle ensures that $A_\phi(a)$ is resolvent positive for $a\in J$. Theorem~\ref{T:NormCont} now implies that  $\G_\phi=\A_\phi+\B_\phi$ is resolvent positive. 
We then claim that for its spectral bound we have $s(\G_\phi)>-\infty$. Indeed, since $\B_\phi\ge 0$, it follows from \cite[Proposition~12.11]{BFR} that $s(\G_\phi)\ge s(\A_\phi)$. Next, the strict positivity $b_\phi> 0$ and \cite[Corollary~13.6]{DanersKochMedina}  imply that $b_\phi(a)\Pi_\phi(a,0)$ is strongly positive on $L_q(\Omega)$ for $a\in J$. Thus, for $\lambda\in \R$, the operator 
\begin{align*}
Q_{\lambda}(\phi)=\int_0^{a_m}e^{-\lambda a}\,b_\phi(a)\, \Pi_{\phi}(a,0)\,\rd a
\end{align*}
is compact and strongly positive on $L_q(\Omega)$. As in the previous section we infer from \cite[Lemma~3.1]{WalkerIUMJ} that the mapping $[\lambda\mapsto r(Q_{\lambda}(\phi))]$ is continuous and strictly decreasing on $\R$ with 
$$
\lim_{\lambda\to-\infty}r(Q_{\lambda}(\phi))=\infty\,,\qquad \lim_{\lambda\to\infty}r(Q_{\lambda}(\phi))=0\,,
$$
and then from  \cite[Proposition~3.2]{WalkerIUMJ} that $s(\A_\phi)=\lambda_0$ with $\lambda_0\in\R$ being the unique real number such that $r(Q_{\lambda_0}(\phi))=1$.
Thus $s(\G_\phi)\ge \lambda_0$ and $s(\G_\phi)$ is an eigenvalue of $\G_\phi$.\\

{\bf (ii)} Let now $\lambda>0$ be large enough, i.e. $\lambda>\max\{s(\G_\phi),0\}$ with $r(Q_{\lambda}(\phi))<1$. Set $$v:=(\lambda-\G_\phi)^{-1}\phi$$ and note that $v\ge 0$ since $\G_\phi$ is resolvent positive and $\lambda>s(\G_\phi)$, see~\cite[Remark~12.12~(b)]{BFR}. Then Theorem~\ref{IUMJT1}~{\bf (c)} entails that $v$ satisfies (in the sense of mild solutions)
$$
\partial_a v=(-\lambda+A_\phi(a))v +\B_\phi v+\phi\,,\quad a\in J\,,\qquad v(0)=\int_0^{a_m}b_\phi(a)v(a)\,\rd a\,.
$$
Thus, since $\B_\phi v\ge 0$, we deduce 
\begin{align}\label{v1}
v(a)\ge e^{-\lambda a}\,\Pi_\phi(a,0)\, v(0)+\int_0^a e^{-\lambda (a-\sigma)}\,\Pi_\phi(a,\sigma)\,\phi(\sigma)\,\rd \sigma\,,\quad a\in J\,,
\end{align}
and, when plugging this into the initial condition,
\begin{align}
v(0)&\ge \int_0^{a_m}b_\phi(a)\,e^{-\lambda a}\,\Pi_\phi(a,0)\,\rd a \, v(0)+\int_0^{a_m}b_\phi(a)\int_0^a e^{-\lambda (a-\sigma)}\,\Pi_\phi(a,\sigma)\,\phi(\sigma)\,\rd \sigma\,\rd a\nonumber\\
&\ge \int_0^{a_m}b(\bar \phi,a)\,e^{-\lambda a}\,\Pi_\phi(a,0)\,\rd a \, v(0)+\int_0^{a_m}b(\bar \phi,a)\int_0^a e^{-\lambda (a-\sigma)}\,\Pi_\phi(a,\sigma)\,\phi(\sigma)\,\rd \sigma\,\rd a\,, \label{v2}
\end{align}
where we used that $b_\phi(a)\ge b(\bar \phi,a)$ due to~\eqref{pos}. Now, the  equilibrium $\phi$ satisfies
$$
\phi(a)=\Pi_\phi(a,0)\phi(0)\,,\quad a\in J\,,\qquad \phi(0)=Q_0(\phi)\phi(0)\,.
$$
Therefore, using the evolution property
$$
\Pi_\phi(a,\sigma)\Pi_\phi(\sigma,0)=\Pi_\phi(a,0)\,,\quad 0\le \sigma\le a\le a_m\,,
$$
we infer that
\begin{align}\label{p1}
\int_0^a e^{-\lambda (a-\sigma)}\,\Pi_\phi(a,\sigma)\,\phi(\sigma)\,\rd \sigma=\frac{1}{\lambda}\big(1-e^{-\lambda a}\big)\,\Pi_\phi(a,0)\,\phi(0)
\end{align}
and thus
\begin{align}\label{p2}
\int_0^{a_m}b(\phi,a)\int_0^a e^{-\lambda (a-\sigma)}\,\Pi_\phi(a,\sigma)\,\phi(\sigma)\,\rd \sigma\,\rd a=\frac{1}{\lambda}\big(1-Q_\lambda(\phi)\big)\phi(0)\,.
\end{align}
From \eqref{v2} and \eqref{p2} it then follows that
$$
\big(1-Q_\lambda(\phi)\big) v(0)\ge \frac{1}{\lambda}\big(1-Q_\lambda(\phi)\big)\phi(0)
$$
and thus, since $\big(1-Q_\lambda(\phi)\big)^{-1}\ge 0$ as $r(Q_{\lambda}(\phi))<1$, we conclude that 
$\lambda v(0)\ge \phi(0)$.
Using this along with \eqref{p1} in \eqref{v1} we derive
$\lambda v(a)\ge\phi(a)$ for $ a\in J$.
By definition of $v$, this means that
$$
\lambda \big(\lambda-\G_\phi\big)^{-1}\phi\ge \phi\,,\quad \lambda \gg 0\,.
$$
Invoking the exponential representation of the semigroup we obtain
$$
e^{t\G_\phi}\phi=\lim_{n\to\infty}\left(1-\frac{t}{n}\G_\phi\right)^{-n}\phi\ge \phi\,,\quad t\ge 0\,,
$$
from which $\|e^{t\G_\phi}\|_{\ml(\E_0)}\ge 1$ for $t\ge 0$, since $\E_0=L_1(J,L_q(\Omega))$ is a Banach lattice. This implies that $s(\G_\phi)=\omega_0(\G_\phi)\ge 0$. By supposition, we then even have $s(\G_\phi)> 0$. Since $s(\G_\phi)$ is an eigenvalue of $\G_\phi$, we conclude from Theorem~\ref{TInStable} that $\phi$ is unstable.
\end{proof}

A simple consequence is:

\begin{cor}\label{C51}
Assume~\eqref{j}. Consider a positive equilibrium $$\phi\in L_1\big(J,W_{q,\mathcal{B}}^{2}(\Omega)\big) \cap C(J,W_{q,N}^{2\alpha}(\Omega))\,,\quad \phi\ge 0\,,$$
to \eqref{P99} for which \eqref{pos} holds.
If \mbox{$r(Q_0(\phi))>1$}, then $\phi$ is unstable in~$L_1\big(J,W_{q,N}^{2\alpha}(\Omega)\big)$.
\end{cor}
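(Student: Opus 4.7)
The plan is to derive Corollary~\ref{C51} directly from Proposition~\ref{P50} by locating the spectral bound $s(\G_\phi)$ strictly above $0$, using the monotonicity of $\lambda\mapsto r(Q_\lambda(\phi))$ already exploited in the proof of Proposition~\ref{P50}.

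First I would invoke the analysis of $Q_\lambda(\phi)$ carried out in part~{\bf (i)} of the proof of Proposition~\ref{P50}. Under the hypotheses~\eqref{j} and~\eqref{pos} together with $\phi\ge 0$, the operator $b_\phi(a)\,\Pi_\phi(a,0)$ is strongly positive on $L_q(\Omega)$ for each $a\in J$ (by \cite[Corollary~13.6]{DanersKochMedina} and the strict positivity of $b$), so $Q_\lambda(\phi)\in\mathcal{K}(L_q(\Omega))$ is strongly positive for every $\lambda\in\R$. By \cite[Lemma~3.1]{WalkerIUMJ}, the map $\lambda\mapsto r(Q_\lambda(\phi))$ is continuous and strictly decreasing with $r(Q_\lambda(\phi))\to\infty$ as $\lambda\to-\infty$ and $r(Q_\lambda(\phi))\to 0$ as $\lambda\to+\infty$. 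Hence there is a unique $\lambda_0\in\R$ with $r(Q_{\lambda_0}(\phi))=1$, and the assumption $r(Q_0(\phi))>1$ forces $\lambda_0>0$ by strict monotonicity.

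Next I would recall that the proof of Proposition~\ref{P50} established $s(\A_\phi)=\lambda_0$ via \cite[Proposition~3.2]{WalkerIUMJ}, and that since $\B_\phi\in\ml_+(\E_0)$ the bound $s(\G_\phi)\ge s(\A_\phi)$ holds (by \cite[Proposition~12.11]{BFR}). Combining these gives
\begin{equation*}
s(\G_\phi)\ \ge\ \lambda_0\ >\ 0\,,
\end{equation*}
so in particular $s(\G_\phi)\neq 0$.

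Finally, I would apply Proposition~\ref{P50}, whose hypotheses (namely~\eqref{j},~\eqref{pos}, and $s(\G_\phi)\neq 0$) are now all verified, to conclude that $\phi$ is unstable in $L_1(J,W_{q,N}^{2\alpha}(\Omega))$. No step is really an obstacle here: the entire work was already done inside the proof of Proposition~\ref{P50}, and the only new content of Corollary~\ref{C51} is the observation that the concrete, easily checkable threshold condition $r(Q_0(\phi))>1$ is what guarantees $s(\G_\phi)>0$.
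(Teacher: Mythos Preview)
Your proposal is correct and follows essentially the same approach as the paper: use the strict monotonicity of $\lambda\mapsto r(Q_\lambda(\phi))$ together with $r(Q_0(\phi))>1$ to force $\lambda_0>0$, then invoke the inequality $s(\G_\phi)\ge s(\A_\phi)=\lambda_0$ already established in the proof of Proposition~\ref{P50} to conclude $s(\G_\phi)>0\neq 0$ and apply that proposition. The paper's proof is just a more condensed version of exactly this argument.
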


\begin{proof}
It has been observed in the proof of Proposition~\ref{P50} that $s(\G_\phi)\ge \lambda_0$, where $\lambda_0\in\R$ is the uniquely determined from the condition $r(Q_{\lambda_0}(\phi))=1$.
Since $[\lambda\mapsto r(Q_{\lambda}(\phi))]$ is strictly decreasing on $\R$ and $r(Q_0(\phi))>1$, we necessarily have $\lambda_0>0$. 
\end{proof}


\begin{appendix}

\section{Proof of Proposition~\ref{Prop:NormCont}}\label{App:NormCont}

\nequation
\aequation

We provide here the proof of Proposition~\ref{Prop:NormCont} which is fundamental for Theorem~\ref{T:NormCont}. We thus impose \eqref{A1l}, \eqref{A2l}, \eqref{Bpart0}, and recall that we consider nonlocal perturbations 
\begin{align*} 
[\B\zeta](a):=\int_0^{a_m} q(a,\sigma)\,\zeta(\sigma)\,\rd \sigma\,,\quad a\in J\,,\quad \zeta\in \E_0\,,
\end{align*}
for some  $q(a,\sigma)=q(a)(\sigma)$ satisfying
\begin{align*}
q\in C\big(J,L_\infty(J,\ml(E_0))\big)\,.
\end{align*}
Then $\B\in\ml(\E_0)$  with
$$
\|\B\|_{\ml(\E_0)}\le a_m\,\|q\|_{\infty}\,,\qquad \|q\|_{\infty}:=\|q\|_{C(J,L_\infty(J,\ml(E_0)))}\,.
$$
For the birth rate we recall that
$b_\ell\in C\big(J,\ml(E_0)\big)$.
We begin with an auxiliary result:

\begin{lem}\label{P51}
Suppose \eqref{A1l}, \eqref{A2l}, and \eqref{Bpart0}. Given $\psi\in\E_0$, let $\mathsf{B}_\psi\in C(\R^+,E_0)$ be defined as in~\eqref{500}. Then, given $T>0$, there is $c_{\mathsf{B}}(T)>0$ such that
\begin{equation}\label{B101}
   \|\mathsf{B}_\psi(\tau)\|_{E_\theta}\le c_{\mathsf{B}}(T)\,\tau^{-\theta}\,\|\psi\|_{\E_0}\,,\quad \tau\in (0,T]\,,\quad \theta\in\{0,\vartheta\}\,.
    \end{equation}
Moreover, given $\ve>0$ and $\kappa\in (0,a_m/2)$, there is $\delta:=\delta(T,\kappa,\ve)>0$ such that
\begin{equation*}\label{B102}
\|\mathsf{B}_{\B\zeta}(\tau_1)-\mathsf{B}_{\B\zeta}(\tau_2)\|_{E_0}\le \ve \|\zeta\|_{\E_0}\,,\qquad \tau_1,\tau_2\in [\kappa,T]\ \text{ with }\  \vert \tau_1-\tau_2\vert \le \delta\,,
    \end{equation*}
whenever $\zeta\in\E_0$.
\end{lem}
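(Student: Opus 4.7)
The plan is to exploit the explicit Volterra representation~\eqref{500} of $\mathsf{B}_\psi$ together with the $E_\vartheta$-smoothing bound $\|\Pi_\ell(a,\sigma)\|_{\ml(E_0,E_\vartheta)}\le M(a-\sigma)^{-\vartheta}e^{\omega(a-\sigma)}$ for the parabolic evolution operator, as well as the assumption $b_\ell\in L_\infty(J,\ml(E_\vartheta))$ recorded in~\eqref{A2l}.

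For estimate~\eqref{B101}, the case $\theta=0$ is immediate from~\eqref{BBBB}, which yields a constant $c(T)$ with $\|\mathsf{B}_\psi(\tau)\|_{E_0}\le c(T)\|\psi\|_{\E_0}$ for $\tau\in[0,T]$. For $\theta=\vartheta$ I would take $\|\cdot\|_{E_\vartheta}$-norms on both sides of~\eqref{500}. The forcing integral is bounded by
\begin{equation*}
\|b_\ell\|_\vartheta\int_0^{a_m-\tau}M\tau^{-\vartheta}e^{\omega\tau}\|\psi(a)\|_{E_0}\,\rd a\le C(T)\,\tau^{-\vartheta}\|\psi\|_{\E_0},
\end{equation*}
which produces the singular factor, while the convolution piece is bounded by $\|b_\ell\|_\vartheta\int_0^\tau Ma^{-\vartheta}e^{\omega a}\|\mathsf{B}_\psi(\tau-a)\|_{E_0}\,\rd a\le C(T)\,T^{1-\vartheta}\|\psi\|_{\E_0}$, using the already established $\theta=0$ bound and the integrability of $a^{-\vartheta}$.

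For the equicontinuity assertion, the decisive structural fact is that $\phi:=\B\zeta$ lies in $C(J,E_0)$ with a modulus of continuity linear in $\|\zeta\|_{\E_0}$: indeed
\begin{equation*}
\|\phi(a_1)-\phi(a_2)\|_{E_0}\le\|q(a_1,\cdot)-q(a_2,\cdot)\|_{L_\infty(J,\ml(E_0))}\,\|\zeta\|_{\E_0},
\end{equation*}
the first factor vanishing as $|a_1-a_2|\to 0$ by uniform continuity of $q$, while $\|\phi\|_{L_\infty(J,E_0)}\le\|q\|_\infty\|\zeta\|_{\E_0}$. I would decompose $\mathsf{B}_\phi(\tau_2)-\mathsf{B}_\phi(\tau_1)=I_1+I_2$, where $I_1$ is the difference of the two convolutions in~\eqref{500} and $I_2$ the difference of the two forcing integrals. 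In $I_2$, the boundary strip $a\in[a_m-\tau_2,a_m-\tau_1]$ contributes $\le C(\tau_2-\tau_1)\|q\|_\infty\|\zeta\|_{\E_0}$ via the $L_\infty$-bound on $\phi$, and for the bulk part I use $\Pi_\ell(a+\tau_2,a)=\Pi_\ell(a+\tau_2,a+\tau_1)\Pi_\ell(a+\tau_1,a)$ together with the standard H\"older-in-time estimate $\|(\Pi_\ell(s+h,s)-I)y\|_{E_0}\le Ch^\vartheta\|y\|_{E_\vartheta}$ applied to $y=\Pi_\ell(a+\tau_1,a)\phi(a)$, whose $E_\vartheta$-norm is at most $M\kappa^{-\vartheta}\|\phi(a)\|_{E_0}$ because $\tau_1\ge\kappa>0$; continuity of $b_\ell$ in~\eqref{b} handles the remaining $b_\ell$-difference piece.

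The main obstacle is $I_1$: after the change of variables $s=\tau_i-a$ it reads
\begin{equation*}
\int_{\tau_1}^{\tau_2}K(\tau_2-s)\mathsf{B}_\phi(s)\,\rd s+\int_0^{\tau_1}\bigl(K(\tau_2-s)-K(\tau_1-s)\bigr)\mathsf{B}_\phi(s)\,\rd s,
\end{equation*}
with $K(a):=b_\ell(a)\Pi_\ell(a,0)$. The first summand is $\le C(\tau_2-\tau_1)\|\phi\|_{\E_0}$ by uniform boundedness of $K$. The second is delicate because $K$ is in general \emph{not} operator-norm continuous at $a=0$. I would circumvent this by splitting the inner $s$-integral at $\tau_1-\eta$: on $[\tau_1-\eta,\tau_1]$ brute boundedness yields $\le 2C\eta\|\phi\|_{\E_0}$, while on $[0,\tau_1-\eta]$ the arguments of $K$ stay in $[\eta,T]$, where $K$ \emph{is} norm-continuous (a consequence of the analyticity of $\Pi_\ell(\cdot,0)$ away from zero together with~\eqref{b}), producing $\le C(\eta)(\tau_2-\tau_1)\|\phi\|_{\E_0}$. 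Choosing first $\eta=\eta(\ve)$ small and then $\delta=\delta(\eta,\kappa,T,\ve)$ small, and using $\|\phi\|_{\E_0}\le a_m\|q\|_\infty\|\zeta\|_{\E_0}$, delivers the asserted estimate.
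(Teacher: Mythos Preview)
Your argument is correct and follows the same route as the paper: decompose $\mathsf{B}_\psi$ into its Volterra convolution part and its forcing part and estimate each difference using continuity of $b_\ell$, regularity of $\Pi_\ell$ away from the diagonal, and---crucially---the fact that $\phi=\B\zeta$ lies in $L_\infty(J,E_0)$ with norm $\le\|q\|_\infty\|\zeta\|_{\E_0}$. (Your observation on the uniform modulus of continuity of $\phi$ is true but not actually used anywhere in your argument; only the $L_\infty$-bound matters.) The paper's execution differs only in flavour: where you combine the evolution property $\Pi_\ell(a+\tau_2,a)=\Pi_\ell(a+\tau_2,a+\tau_1)\Pi_\ell(a+\tau_1,a)$ with the H\"older estimate $\|(\Pi_\ell(s+h,s)-I)y\|_{E_0}\le Ch^\vartheta\|y\|_{E_\vartheta}$ and the $E_\vartheta$-smoothing bound, the paper instead invokes the uniform norm-continuity of $\Pi_\ell$ on compact subsets of $J_\Delta^*$ (its Lemma~\ref{Knu}). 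Your approach yields an explicit rate and is slightly more streamlined; the paper's avoids appealing to the H\"older estimate, which is standard but not stated in its appendix.

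One small oversight: your kernel $K(a)=\chi(a)b_\ell(a)\Pi_\ell(a,0)$ is in general \emph{not} norm-continuous at $a=a_m$ (it jumps to zero there), so if $T>a_m$ your claim that $K$ is norm-continuous on $[\eta,T]$ fails. The fix is immediate: peel off an additional boundary strip of $s$-values of length $\le\tau_2-\tau_1$ where one argument of $K$ exceeds $a_m$ and the other does not; this strip contributes $O(\tau_2-\tau_1)\|\zeta\|_{\E_0}$ just like your first boundary term. The paper handles this explicitly by writing the convolution as $\int_{(\tau-a_m)_+}^{\tau}$ and tracking the lower endpoint $(\tau-a_m)_+$ through a five-term decomposition.
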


\begin{proof}
Estimate~\eqref{B101} follows for $\theta=0$ from the fact that $[\psi\mapsto \mathsf{B}_\psi]\in \ml \big(\E_0, C(\R^+,E_0)\big)$, see~\eqref{BBBB}. It is derived from Gronwall's inequality as shown for both cases $\theta=0,\vartheta$ in~\cite[Formula~(2.2)]{WalkerIUMJ}. 

In order to prove the continuity property of $\mathsf{B}_{\B\zeta}$
for $\zeta\in\E_0$, set $\psi:=\B\zeta\in\E_0$. We write 
\begin{align}
    \mathsf{B}_\psi(\tau)&=\int_{(\tau-a_m)_+}^\tau  b_\ell(\tau-a)\, \Pi_\ell(\tau-a,0)\, \mathsf{B}_\psi(a)\, \rd a\nonumber\\
&\qquad +\int_0^{(a_m-\tau)_+}  b_\ell(\tau+a)\, \Pi_\ell(\tau+a,a)\, \psi(a)\, \rd a\nonumber\\
&=:\mathsf{B}_\psi^1(\tau)+\mathsf{B}_\psi^2(\tau)\label{p1x}
    \end{align}
for~$\tau\ge 0$ and note from \eqref{EOx} that
\begin{align}\label{c1}
c_1:=\max_{0\le\sigma\le a\le a_m}\|\Pi_\ell(a,\sigma)\|_{\ml(E_0)}<\infty\,.
   \end{align} 
Choose $\delta_1:=\delta_1(T,\ve,\kappa)\in(0,\min\{a_m/2,\kappa\})$  such that
\begin{equation}\label{p4}
   2\,c_1\,c_{\mathsf{B}}(T)\,\|b\|_{L_\infty(J,\ml(E_0))}\, \max\{T,1\}\,\delta_1\le \frac{\ve}{4\|\B\|_{\ml(\E_0)}}\,.    
\end{equation}
Due to Lemma~\ref{Knu} we may choose $\delta_2:=\delta_2(T,\ve,\kappa)>0$ such that
\begin{equation}\label{p5}
   T\,c_{\mathsf{B}}(T)\,\|b\|_{L_\infty(J,\ml(E_0))}\, \|\Pi_\ell(s_1,0)-\Pi_\ell(s_2,0)\|_{\ml(E_0)}  \le \frac{\ve}{4\|\B\|_{\ml(\E_0)}}  \end{equation}
for $s_1, s_2\in [\delta_1,T\wedge a_m]$ with $\vert s_1-s_2\vert \le \delta_2$ and
\begin{equation}\label{p6}
  \|b_\ell(\bar s_1)-b_\ell(\bar s_2)\|_{\ml(E_0)}\, c_1\, \max\{c_{\mathsf{B}}(T)T,1\}\le \frac{\ve}{4\|\B\|_{\ml(\E_0)}}   
\end{equation}
for $\bar s_1, \bar s_2\in [0,a_m]$ with $\vert \bar s_1-\bar s_2\vert \le \delta_2$. Set $$\delta_0:=\delta_0(T,\ve,\kappa):=\min\{\delta_1,\delta_2\}>0\,.$$ Then, for $\kappa\le \tau_2\le \tau_1\le T$ with $\vert \tau_1-\tau_2\vert \le \delta_0$ we note that $(\tau_1-a_m)_+<\tau_2-\delta_1$ and obtain from \eqref{B101}-\eqref{p6} 
\begin{align*}
\| \mathsf{B}_\psi^1&(\tau_1)-\mathsf{B}_\psi^1(\tau_2)\|_{E_0}\\
& \le
\int_{(\tau_2-a_m)_+}^{(\tau_1-a_m)_+} \| b_\ell(\tau_2-a)\|_{\ml(E_0)}\,\| \Pi_\ell(\tau_2-a,0)\|_{\ml(E_0)}\,\| \mathsf{B}_\psi(a)\|_{E_0}\, \rd  a \\
&\quad +\int_{(\tau_1-a_m)_+}^{\tau_2} \| b_\ell(\tau_1-a)- b_\ell(\tau_2-a)\|_{\ml(E_0)}\,\| \Pi_\ell(\tau_1-a,0)\|_{\ml(E_0)}\,\| \mathsf{B}_\psi(a)\|_{E_0}\, \rd  a \\
&\quad +\int_{(\tau_1-a_m)_+}^{\tau_2-\delta_1} \| b_\ell(\tau_2-a)\|_{\ml(E_0)}\,\| \Pi_\ell(\tau_1-a,0)-\Pi_\ell(\tau_2-a,0)\|_{\ml(E_0)}\,\| \mathsf{B}_\psi(a)\|_{E_0}\, \rd  a 
\\
&\quad + \int_{\tau_2-\delta_1}^{\tau_2} \| b_\ell(\tau_2-a)\|_{\ml(E_0)}\,\| \Pi_\ell(\tau_1-a,0)-\Pi_\ell(\tau_2-a,0)\|_{\ml(E_0)}\,\| \mathsf{B}_\psi(a)\|_{E_0}\, \rd  a 
\\
&\quad +\int_{\tau_2}^{\tau_1} \| b_\ell(\tau_1-a)\|_{\ml(E_0)}\,\| \Pi_\ell(\tau_1-a,0)\|_{\ml(E_0)}\,\| \mathsf{B}_\psi(a)\|_{E_0}\, \rd  a \\
& \le \|b_\ell\|_{L_\infty(J,\ml(E_0))}\, c_1 \, c_{\mathsf{B}}(T)\, \|\psi\|_{\E_0}\, \vert(\tau_1-a_m)_+-(\tau_2-a_m)_+ \vert \\
&\quad +\sup_{\substack{s_1,s_2\in [0, a_m]\\ \vert s_1-s_2\vert\le \delta_0}}\| b_\ell(s_1)- b_\ell(s_2)\|_{\ml(E_0)}\,c_1\,c_{\mathsf{B}}(T)\,T\, \|\psi\|_{\E_0}  \\
&\quad +\|b_\ell\|_{L_\infty(J,\ml(E_0))}\,   \sup_{\substack{s_1,s_2\in [\delta_1, a_m]\\\vert s_1-s_2\vert\le \delta_0}} \| \Pi_\ell(s_1,0)-\Pi_\ell(s_2,0)\|_{\ml(E_0)}\,c_{\mathsf{B}}(T)\,T\, \|\psi\|_{\E_0}
\\
&\quad + 2\, \|b_\ell\|_{L_\infty(J,\ml(E_0))}\, c_1\,\delta_1\, c_{\mathsf{B}}(T)\,T\, \|\psi\|_{\E_0}
 +\|b_\ell\|_{L_\infty(J,\ml(E_0))}\, c_1 \, c_{\mathsf{B}}(T)\, \|\psi\|_{\E_0}\, \vert\tau_1-\tau_2 \vert \\
&\le \frac{\ve}{\|\B\|_{\ml(\E_0)}}\,\|\psi\|_{\E_0}\,.
\end{align*}
Consequently,  using $\psi=\B\zeta$ with $\B\in\ml(\E_0)$ we derive
\begin{align}\label{psi22}
\| \mathsf{B}_\psi^1&(\tau_1)-\mathsf{B}_\psi^1(\tau_2)\|_{E_0} \le  \ve\,\|\zeta\|_{\E_0}\,,\qquad \kappa\le \tau_2\le \tau_1\le T\,,\quad \vert \tau_1-\tau_2\vert \le \delta_0\,.
\end{align}
For $\mathsf{B}_\psi^2$ we use Lemma~\ref{Knu} to find
 \mbox{$\eta:=\eta(\ve,\kappa)>0$} such that
$$
\|b_\ell\|_{L_\infty(J,\ml(E_0))}\,\|\Pi_\ell(a+\tau_1,a)-\Pi_\ell(a+\tau_2,a)\|_{\ml(E_0)}\le \frac{\ve}{4\|\B\|_{\ml(\E_0)}}\,,
$$
whenever $\kappa\le\tau_2\le \tau_1\le T$, $ a\in[0,(a_m-\tau_1)_+]$, $\vert \tau_1-\tau_2\vert\le \eta$. Let $\delta_3>0$ with
$$
\| q \|_{\infty}\,\delta_3\le \frac{\ve}{4}
$$
and set $$\delta:=\delta(T,\ve,\kappa):=\min\{\delta_0,\eta,\delta_3\}\,.$$ Then we obtain for $\kappa\le \tau_2\le \tau_1\le T$ with $\vert \tau_1-\tau_2\vert \le \delta$
that
\begin{align*}
\| \mathsf{B}_\psi^2&(\tau_1)-\mathsf{B}_\psi^2(\tau_2)\|_{E_0}\\
& \le
\int_0^{(a_m-\tau_1)_+}\| b_\ell(a+\tau_1)-b_\ell(a+\tau_2)\|_{\ml(E_0)}\,\| \Pi_\ell(a+\tau_1,a)\|_{\ml(E_0)}\,\| \psi(a)\|_{E_0}\, \rd  a\\
&\quad + \int_0^{(a_m-\tau_1)_+}\| b_\ell(a+\tau_2)\|_{\ml(E_0)}\,\| \Pi_\ell(a+\tau_1,a)-\Pi_\ell(a+\tau_2,a)\|_{\ml(E_0)}\,\| \psi(a)\|_{E_0}\, \rd  a \\
&\quad + \int_{(a_m-\tau_1)_+}^{(a_m-\tau_2)_+}\| b_\ell(a+\tau_2)\|_{\ml(E_0)}\,\| \Pi_\ell(a+\tau_2,a)\|_{\ml(E_0)}\,\| \psi(a)\|_{E_0}\, \rd  a \\
&\le  \sup_{\substack{s_1,s_2\in [0, a_m]\\ \vert s_1-s_2\vert\le \delta}}\| b_\ell(s_1)- b_\ell(s_2)\|_{\ml(E_0)}\, c_1\,\|\psi\|_{\E_0}\\
&\quad +\|b_\ell\|_{L_\infty(J,\ml(E_0))}\,   \sup_{a\in [0, (a_m-\tau_1)_+]} \| \Pi_\ell(a+\tau_1,a)-\Pi_\ell(a+\tau_2,a)\|_{\ml(E_0)}\,\|\psi\|_{\E_0}\\
&\quad +\|b_\ell\|_{L_\infty(J,\ml(E_0))}\, c_1 \int_{(a_m-\tau_1)_+}^{(a_m-\tau_2)_+} \| \psi(a)\|_{E_0}\, \rd  a \,.
\end{align*}
Since $\psi=\B\zeta$ we get from~\eqref{Bpart0} that
\begin{align*}
\int_{(a_m-\tau_1)_+}^{(a_m-\tau_2)_+} \| \psi(a)\|_{E_0}\, \rd  a&\le \int_{(a_m-\tau_1)_+}^{(a_m-\tau_2)_+} \int_0^{a_m}\| q(a,\sigma)\zeta(\sigma)\|_{E_0}\,\rd \sigma \rd  a\nonumber\\
&\le \|\zeta\|_{\E_0} \, \| q \|_{\infty}\,\vert (a_m-\tau_2)_+-(a_m-\tau_1)_+\vert\,.
\end{align*}
Gathering the previous computations and using~\eqref{p6} and $\psi=\B\zeta$ with $\B\in\ml(\E_0)$, we deduce that
\begin{align}\label{psi223}
\| \mathsf{B}_\psi^2&(\tau_1)-\mathsf{B}_\psi^2(\tau_2)\|_{E_0} \le  \ve\,\|\zeta\|_{\E_0} \,.
\end{align}
for $\kappa \le \tau_2\le \tau_1\le T$ with $\vert \tau_1-\tau_2\vert \le \delta$.
Consequently,  Lemma~\ref{P51} follows from \eqref{p1x},~\eqref{psi22}, and~\eqref{psi223}.
\end{proof}

\subsection*{Proof of Proposition~\ref{Prop:NormCont}}

For the proof of Proposition~\ref{Prop:NormCont} we suppose \eqref{A1l}, \eqref{A2l}, and~\eqref{Bpart0}. We have to show that $\mathcal{V}\mS(t)\in \mathcal{K}(\E_0)$ for each $t>0$, where
\begin{align*} 
\mathcal{V}\mS(t)\zeta:=\int_0^t\mS(t-s)\,\B\,\mS(s)\zeta\,\rd s\,,\quad \zeta\in\E_0\,.
\end{align*}
To this end we use Simon's criterion for compactness in $\E_0=L_1(J,E_0)$ (see \cite[Theorem~1]{Simon_87}). Note first from~\eqref{E3} that, for $t>0$ and~$\zeta\in\E_0$,
\begin{align*}\ 
\|\mathcal{V}\mS(t)\zeta\|_{\E_0}&\le \int_0^t\|\mS(t-s)\|_{\ml(\E_0)}\,\|\B\|_{\ml(\E_0)}\,\|\mS(s)\|_{\ml(\E_0)}\,\|\zeta\|_{\E_0}\,\rd s\\
&\le M_0^2\,\|\B\|_{\ml(\E_0)}\, e^{ \varkappa_0  t}\,t\, \|\zeta\|_{\E_0}\,,
\end{align*}
so that $\mathcal{V}\mS:(0,\infty)\to \ml(\E_0)$.

 Let $t\in (0,T)$ be fixed and consider a sequence $(\phi_j)_{j\in\N}$ with $\|\phi_j\|_{\E_0}\le k_0$ for $j\in\N$. Then $(\mathcal{V}\mS(t)\phi_j)_{j\in\N}$ is bounded in $\E_0$ as just shown.\\

\noindent {\bf (a)} We introduce 
$$
\psi_s^j:=\B\,\mS(s)\phi_j\,,\quad s>0\,,\quad j\in\N\,.
$$ 
Note that $[s\mapsto \psi_s^j]\in C\big((0,\infty),\E_0\big)$ with, recalling~\eqref{Bpart0},
\begin{equation*}
\begin{split}
\|\psi_s^j(a)\|_{E_0}&\le \|q(a,\cdot)\|_{L_\infty(J,\ml(E_0))}\,\|\mS(s)\phi_j\|_{\E_0}\,,\quad a\in J\,,\quad s\in (0,T]\,,\quad j\in\N\,,
\end{split}
\end{equation*}
Hence, invoking~\eqref{E3},
\begin{equation}\label{psis2}
\begin{split}
\|\psi_s^j(a)\|_{E_0}&\le 
c_0(T)\,k_0\,,\quad a\in J\,,\quad s\in (0,T]\,,\quad j\in\N\,,
\end{split}
\end{equation}
where
$c_0(T):=\|q\|_{\infty}M_0\,e^{\vert\varkappa_0\vert T}$,
and then
\begin{equation}\label{psis}
\|\psi_s^j\|_{\E_0}\le a_m\, c_0(T)\,k_0\,,\quad s\in (0,T]\,,\quad j\in\N\,.
\end{equation} 
Together with~\eqref{B101}  this yields
\begin{equation}\label{psis3}
\|\mathsf{B}_{\psi_s^j}(\tau)\|_{E_\theta}\le c_\mathsf{B}(T)\, a_m\, c_0(T)\,k_0\, \tau^{-\theta}\,,\quad s,\tau\in (0,T]\,,\quad j\in\N\,,\quad \theta\in \{0,\vartheta\}\,.
\end{equation}
Let $0<h<\min\{a_m/2,t\}$. Then, due to~\eqref{100} we have
\begin{align}
&\int_0^{a_m-h}\left\|{[\mathcal{V}\mS(t)\phi_j]}(a+h)-[\mathcal{V}\mS(t)\phi_j](a)\right\|_{E_0}\,\rd a\nonumber\\
&\le \int_0^{a_m-h}\int_0^t\left\|[\mS(t-s)\psi_s^j](a+h)-[\mS(t-s)\psi_s^j](a)\right\|_{E_0}\,\rd s\, \rd a\nonumber\\
&\le \int_0^{a_m-h}\int_{(t-a)_+}^t\|\Pi_\ell(a+h,a+h-t+s)-\Pi_\ell(a,a-t+s)\|_{\ml(E_0)}\,\nonumber\\
&\qquad\qquad\qquad\qquad\qquad\qquad\qquad\qquad\qquad\qquad\times \|\psi_s^j(a+h-t+s)\|_{E_0}\, \rd s\, \rd a \nonumber\\
&\quad +\int_0^{a_m-h}\int_{(t-a)_+}^t\|\Pi_\ell(a,a-t+s)\|_{\ml(E_0)}\, \|\psi_s^j(a+h-t+s)-\psi_s^j(a-t+s)\|_{E_0}\,\rd s\, \rd a\nonumber\\
&\quad +\int_0^{a_m-h}\int_{(t-a-h)_+}^{(t-a)_+}\big\|\Pi_\ell(a+h,a+h-t+s)\psi_s^j(a+h-t+s) \nonumber\\
&\qquad\qquad\qquad\qquad\qquad\qquad\qquad\qquad\qquad\qquad -\Pi_\ell(a,0)\mathsf{B}_{\psi_s^j}(t-s-a)\big\|_{E_0}\,\rd s\, \rd a\nonumber\\
&\quad +\int_0^{a_m-h}\int_0^{(t-a-h)_+}\big\|\Pi_\ell(a+h,0)\mathsf{B}_{\psi_s^j}(t-s-a-h)\nonumber\\
&\qquad\qquad\qquad\qquad\qquad\qquad\qquad\qquad\qquad\qquad -\Pi_\ell(a,0)\mathsf{B}_{\psi_s^j}(t-s-a)\big\|_{E_0}\,\rd s\, \rd a\nonumber\\
&=: I+II+III+IV\,.\label{I}
\end{align}
We then treat  each integral separately. Let $\ve>0$ be arbitrary in the following.\vspace{2mm}

{\bf (i)} Choose $\kappa:=\kappa(\ve,T,t)\in (0,\min\{a_m/2,t\})$ such that
$$
2 c_1\, c_0(T)\,k_0\, (T+a_m)\,\kappa \le \frac{\ve}{6}
$$
and  $\eta_1:=\eta_1(T,\ve,\kappa)\in (0,a_m)$ such that (see Lemma~\ref{Knu})
$$
\sup_{\substack{(a_1,\sigma_1) ,(a_2,\sigma_2)\in S_\kappa\\ \vert(a_1,\sigma_1) -(a_2,\sigma_2)\vert\le \eta_1 }}\|\Pi_\ell(a_1,\sigma_1) -\Pi_\ell(a_2,\sigma_2)\|_{\ml(E_0)} \,  c_0(T)\,k_0\,a_m\,T  \le \frac{\ve}{6}\,.
$$
Then, from  \eqref{psis2}, \eqref{c1} we have, for $2h<\eta_1$,
\begin{align*}
 I
&\le c_0(T)k_0 \int_0^{\kappa}\int_{(t-a)_+}^t\|\Pi_\ell(a+h,a+h-t+s)-\Pi_\ell(a,a-t+s)\|_{\ml(E_0)}\,  \rd s\, \rd a \\
&\ + c_0(T)k_0 \int_{\kappa}^{a_m-h}\int_{(t-a)_+}^{t-\kappa}\|\Pi_\ell(a+h,a+h-t+s)-\Pi_\ell(a,a-t+s)\|_{\ml(E_0)}\,  \rd s\, \rd a \\
&\ + c_0(T)k_0 \int_{\kappa}^{a_m-h}\int_{t-\kappa}^t\|\Pi_\ell(a+h,a+h-t+s)-\Pi_\ell(a,a-t+s)\|_{\ml(E_0)}\,  \rd s\, \rd a \\
&\le 2 c_1 c_0(T)k_0(T+a_m)\kappa\\
&\ + c_0(T)k_0 \int_{\kappa}^{a_m-h}\int_{(t-a)_+}^{t-\kappa}\|\Pi_\ell(a+h,a+h-t+s)-\Pi_\ell(a,a-t+s)\|_{\ml(E_0)}\,  \rd s\, \rd a \\
&\le 2 c_1c_0(T)k_0 (T+a_m)\,\kappa\\
&\  +c_0(T)\,k_0\,a_m\,T\,\sup_{\substack{(a_1,\sigma_1) ,(a_2,\sigma_2)\in S_\kappa\\ \vert(a_1,\sigma_1) -(a_2,\sigma_2)\vert\le \eta_1 }}\|\Pi_\ell(a_1,\sigma_1) -\Pi_\ell(a_2,\sigma_2)\|_{\ml(E_0)}   
\end{align*}
and therefore
\begin{align}\label{n2}
 I &\le \frac{\ve}{3}\,,\quad 2h<\eta_1\,.
\end{align}

{\bf (ii)} We choose $\eta_2:=\eta_2(T,\ve)>0$ according to~\eqref{Bpart0} such that
$$
c_1\sup_{\substack{\tau_1, \tau_2\in J\\ \vert\tau_1-\tau_1\vert\le\eta_2}} \left\|q(\tau_1,\cdot)-  q(\tau_2,\cdot) \right\|_{L_\infty(J,\ml(E_0))} a_m \,T\, M_0\,e^{\vert\varkappa_0\vert T} \,  k_0\le \frac{\ve}{3} \,.
$$
We then use~\eqref{c1}, recall $\psi_s^j=\B\,\mS(s)\phi_j$, and invoke~\eqref{Bpart0} and \eqref{E3} to get, for $h<\eta_2$,
\begin{align*}
II&\le c_1\int_0^{a_m-h}\int_{(t-a)_+}^t\int_0^{a_m}\left\|\big(q(a+h-t+s,\sigma)-  q(a-t+s,\sigma)\big)[\mS(s)\phi_j](\sigma) \right\|_{E_0}\, \rd\sigma\rd s \rd a\nonumber\\
&\le c_1\int_0^{a_m-h}\int_{(t-a)_+}^t\left\|q(a+h-t+s,\cdot)-  q(a-t+s,\cdot) \right\|_{L_\infty(J,\ml(E_0))}\,  \|\mS(s)\phi_j\|_{\E_0}\, \rd s\, \rd a\nonumber\\
&\le c_1\sup_{\substack{\tau_1, \tau_2\in J\\ \vert\tau_1-\tau_1\vert\le\eta_2}} \left\|q(\tau_1,\cdot)-  q(\tau_2,\cdot) \right\|_{L_\infty(J,\ml(E_0))} a_m \,T\, M_0\,e^{\vert\varkappa_0\vert T} \,  k_0
\end{align*}
and therefore
\begin{align}\label{n3}
 II &\le \frac{\ve}{3}\,,\quad h<\eta_2\,.
\end{align}

{\bf (iii)} It follows from \eqref{c1}, \eqref{psis2}, and \eqref{psis3} that
\begin{align}\label{n4}
III&\le c_1 \, c_0(T)\,k_0\, a_m\big(1+c_\mathsf{B}(T)\, a_m\big)\, h\,.
\end{align}

{\bf (iv)} Finally, we choose $\bar\kappa:=\bar\kappa(\ve,T)\in (0,a_m/2)$ such that
$$
2 c_1\,a_m^2\, c_\mathsf{B}(T)\, c_0(T)\,k_0\,\bar\kappa \le \frac{\ve}{6}
$$
and invoke then \eqref{psis2} and Lemma~\ref{P51} to find $\eta_3:=\eta_3(T,\ve)>0$ such that
\begin{align*}
\|\mathsf{B}_{\psi_s^j}(\tau_1)-\mathsf{B}_{\psi_s^j}(\tau_2)\|_{E_0}\le  \frac{\ve}{6c_1a_mT}\,,\qquad \tau_1,\tau_2\in [\bar \kappa,T]\ \text{ with }\  \vert \tau_1-\tau_2\vert \le \eta_3\,.
    \end{align*}
Let $C(\vartheta)>0$ be the constant from \eqref{II.Equation(5.3.8)} (for $\Pi$ replaced by $\Pi_\ell$). Using the previous estimate along with \eqref{c1} and~\eqref{psis3} we then derive, for $h<\eta_3$,
\begin{align*}
 IV&\le \int_0^{a_m-h}\int_0^{(t-a-h-\bar\kappa)_+}\left\|\Pi_\ell(a+h,0)-\Pi_\ell(a,0)\right\|_{\ml(E_\vartheta E_0)} \| \mathsf{B}_{\psi_s^j}(t-s-a-h)\|_{E_\vartheta}\,\rd s\, \rd a\\
&\quad +\int_0^{a_m-h}\int_0^{(t-a-h-\bar\kappa)_+}\|\Pi_\ell(a,0)\|_{\ml(E_0)}\,\|\mathsf{B}_{\psi_s^j}(t-s-a-h)-\mathsf{B}_{\psi_s^j}(t-s-a)\|_{E_0}\,\rd s\, \rd a\\
&\quad +\int_0^{a_m-h}\int_{(t-a-h-\bar\kappa)_+}^{(t-a-h)_+}\Big\{ \|\Pi_\ell(a+h,0)\|_{\ml(E_0)}\,\|\mathsf{B}_{\psi_s^j}(t-s-a-h)\|_{E_0}\\
&\qquad\qquad\qquad\qquad\qquad\qquad\qquad\qquad+\|\Pi_\ell(a,0)\|_{\ml(E_0)}\,\|\mathsf{B}_{\psi_s^j}(t-s-a)\|_{E_0}\Big\}\,\rd s\, \rd a\\
&\le C(\vartheta)\, h^\vartheta\, c_\mathsf{B}(T)\, a_m\, c_0(T)\,k_0\, \int_0^{a_m-h}\int_0^{(t-a-h-\bar\kappa)_+} (t-s-a-h)^{-\vartheta}\,\rd s\, \rd a\\
&\quad + c_1\, a_m\, T\,  \frac{\ve}{6c_1a_mT} + 2 c_1\,a_m^2\, c_\mathsf{B}(T)\,  c_0(T)\,k_0\,\bar\kappa
\end{align*}
and therefore
\begin{align}\label{n5}
 IV &\le \frac{\ve}{3}+C(\vartheta)\, c_\mathsf{B}(T)\, a_m\, c_0(T)\,k_0\, a_m\,T^{1-\vartheta}\, h^\vartheta\,,\quad h<\eta_3\,.
\end{align}

{\bf (v)} Consequently, we conclude from \eqref{I}-\eqref{n5} that
$$
\varlimsup_{h\to 0}\,\sup_{j\in\N}\int_0^{a_m-h}\left\|{[\mathcal{V}\mS(t)\phi_j]}(a+h)-[\mathcal{V}\mS(t)\phi_j](a)\right\|_{E_0}\,\rd a\le \ve
$$
and thus, since $\ve>0$ was arbitrary,
\begin{align}\label{n6}
\lim_{h\to 0}\,\sup_{j\in\N}\int_0^{a_m-h}\left\|{[\mathcal{V}\mS(t)\phi_j]}(a+h)-[\mathcal{V}\mS(t)\phi_j](a)\right\|_{E_0}\,\rd a=0\,.
\end{align}
\noindent {\bf (b)} Finally, for $j\in\N$ we have from \eqref{EOx}, \eqref{psis3}, and \eqref{psis2} that
\begin{align*}
\int_0^{a_m}\|[\mathcal{V}\mS(t)\phi_j](a)\|_{E_\vartheta}\,\rd a&\le \int_0^{a_m}\int_0^{(t-a)_+}\|\Pi(a,0)\|_{\ml(E_0,E_\vartheta)}\,\|\mathsf{B}_{\psi_s^j}(t-s-a)\|_{E_0}\,\rd s\, \rd a\\
&\  +\int_0^{a_m}\int_{(t-a)_+}^t\|\Pi(a,a-t+s)\|_{\ml(E_0,E_\vartheta)}\,\|\psi_s^j(a-t+s)\|_{E_0}\,\rd s\, \rd a\\
&\le  M_\vartheta \big(c_\mathsf{B}(T)\, a_m+1\big) c_0(T)\,k_0 \int_0^{a_m} e^{\varpi a}\,a^{-\vartheta}\,\rd a\,.
\end{align*}
Since the right-hand side is finite and due to the compact embedding of $E_\vartheta$ in $E_0$ we conclude that
\begin{align}\label{n7}
\left\{\int_{a_1}^{a_2}[\mathcal{V}\mS(t)\phi_j](a)\,\rd a\,;\, j\in \N\right\}\ \text{ is relatively compact in $E_0$ for  $0<a_1<a_2<a_m$}\,.
\end{align}
We now infer from  \eqref{n6}, \eqref{n7}, and \cite[Theorem~1]{Simon_87}  that the sequence $(\mathcal{V}\mS(t)\phi_j)_{j\in\N}$ is relatively compact in~$\E_0=L_1(J,E_0)$. Since $t>0$ was arbitrary, this yields Proposition~\ref{Prop:NormCont}.
\qed

\section{Parabolic Evolution Operators}\label{App:EvolSys}

\nequation
\bequation

Parabolic evolution operators are thoroughly treated in~\cite{LQPP} to which we refer. We only provide here their most important properties that we have used in the previous sections. 

\subsection*{Basic Definition} Let $E_1\hookrightarrow E_0$ be a densely injected Banach couple, $J=[0,a_m]$, and
$$
J_\Delta:= \{(a,\sigma)\in J\times J\,;\,  0\le\sigma\le a\}\,,\quad J_\Delta^*:= \{(a,\sigma)\in J\times J\,;\,  0\le\sigma< a\}\,.
$$
We consider $$
A:J\to \mathcal{A}(E_0)
$$
with $\dom(A(a))=E_1$ for each $a\in J$, 
where $\mathcal{A}(E_0)$ means the closed linear operators in~$E_0$. \\

Following \cite[Section~II.2.1]{LQPP} we say that {\bf $A$ generates a parabolic evolution operator~$\Pi$  on~$E_0$ with regularity subspace $E_1$}, provided that $\Pi:J_\Delta\to \ml(E_0)$ is such that
\begin{subequations}
\begin{equation}\label{II.Equation(2.1.2)}
\Pi\in C\big(J_\Delta,\ml_s(E_0)\big)\cap C\big(J_\Delta^*,\ml(E_0,E_1)\big)
\end{equation}
satisfying
\begin{equation}
\Pi(a,a)=1_{E_0}\,,\qquad \Pi(a,\sigma)=\Pi(a,s)\Pi(s,\sigma)\,,\quad (a,s), (s,\sigma)\in J_\Delta\,,
\end{equation}
and, for $a\in J$,
\begin{equation}\label{II.Equation(2.1.6)}
\Pi(\cdot,a)\in C^1\big((a,a_m),\ml(E_0)\big) \,,\quad \Pi(a,\cdot)\in C^1\big([0,a),\ml_s(E_1,E_0)\big) 
\end{equation}
with
\begin{equation}
\partial_1 \Pi(a,\sigma)=A(a)\Pi(a,\sigma)\,,\quad \partial_2 \Pi(a,\sigma)=- \Pi(a,\sigma) A(\sigma)\,,\quad (a,\sigma)\in J_\Delta^*\,.
\end{equation}
\end{subequations}

In the following, let
\begin{equation*}
A\in  C^\rho\big(J,\mathcal{H}(E_1,E_0)\big)
\end{equation*}
be fixed with $\rho>0$. Then~\cite[II.~Corollary~4.4.2]{LQPP} ensures that $A$ generates a unique parabolic evolution operator $\Pi$  on $E_0$ with regularity subspace~$E_1$ in the above sense.

\subsection*{Basic Estimates} Given an interpolation space $E_\theta=(E_0,E_1)_\theta$ with $\theta\in [0,1]$,  there are $\varpi\in\R$ and $M_\theta\ge 1$ such that
	\begin{equation}\label{EOx}
	\|\Pi(a,\sigma)\|_{\ml(E_\theta)}+(a-\sigma)^\theta\,\|\Pi(a,\sigma)\|_{\ml(E_0,E_\theta)}\le M_\theta e^{\varpi (a-\sigma)}\,,\qquad (a,\sigma)\in J_\Delta \,,
	\end{equation}
according to~\cite[II.~Lemma~5.1.3]{LQPP}.

\subsection*{Solvability of Cauchy Problems} For $x\in E_0$ and \mbox{$f\in L_1(J,E_0)$}, 
the {\it mild solution}  $v\in C(J,E_0)$ to the Cauchy problem
\begin{subequations}
\begin{equation}\label{CP}
\partial_a v=A(a)v+ f(a)\,,\quad a\in \dot{J}:=(0,a_m]\,,\qquad v(0)= x\,,
\end{equation}
is given by
\begin{equation}\label{VdKx}
v(a)=\Pi(a,0)x+\int_0^a\Pi(a,\sigma)\,f(\sigma)\,\rd \sigma\,,\quad a\in J\,.
\end{equation} 
\end{subequations}
If $x\in E_\vartheta$ for some $\vartheta\in [0,1]$ and $f\in C^\theta(J,E_0)+C(J,E_\theta)$ with $\theta \in (0,1]$ (with admissible interpolation functors), then $$v\in  C(J,E_\vartheta)\cap C^1(\dot{J},E_0)\cap C(\dot{J},E_1)$$ is a strong solution to~\eqref{CP}. Actually, if, in addition, $ x\in E_1$, then $$v\in C^1(J,E_0)\cap C(J,E_1)\,.$$ See \cite[II.~Theorem~1.2.1,~Theorem~1.2.2]{LQPP}.

\subsection*{Continuity Properties } Given $\vartheta\in (0,1)$, there is $C(\vartheta)>0$ such that
\begin{equation}\label{II.Equation(5.3.8)}
\|\Pi (a+h,0)-\Pi  (a,0)\|_{\ml(E_\vartheta,E_0)}\le C(\vartheta) h^{\vartheta}\,,\quad 0\le a\le a+h\le a_m\,,
\end{equation}
according to \cite[II.~Equation~(5.3.8)]{LQPP}. Moreover:

\begin{lem}\label{Knu}
For $\ve>0$ and $\kappa\in (0,a_m)$ given, there is $\eta:=\eta(\ve,\kappa)>0$ such that
$$
\|\Pi (a_1,\sigma_1)-\Pi (a_2,\sigma_2)\|_{\ml(E_0)}\le \ve\,,\quad (a_1,\sigma_1), (a_2,\sigma_2)\in S_\kappa\,,\quad \vert (a_1,\sigma_1)-(a_2,\sigma_2)\vert\le \eta\,,
$$
where
$
S_\kappa:=\{(a,\sigma)\in \Delta_J^*\,;\,  \kappa\le  a-\sigma\}.
$
\end{lem}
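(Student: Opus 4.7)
\textbf{Proof plan for Lemma~\ref{Knu}.} My approach is a standard compactness argument based on the regularity property~\eqref{II.Equation(2.1.2)}.  The first step is to observe that the set $S_\kappa$ is in fact a compact subset of $\R^2$. Indeed, the strict inequality $\sigma<a$ that appears in the definition of $J_\Delta^*$ is automatically improved by the constraint $a-\sigma\ge\kappa>0$, so
$$
S_\kappa=\big\{(a,\sigma)\in[0,a_m]\times[0,a_m]\,;\, 0\le\sigma\le a-\kappa\big\}\,,
$$
which is a closed bounded (hence compact) subset of $\R^2$, and $S_\kappa\subset J_\Delta^*$.

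The second step is to note that from~\eqref{II.Equation(2.1.2)} we have $\Pi\in C(J_\Delta^*,\ml(E_0,E_1))$, where (in contrast to $\ml_s(E_0)$) the target carries the operator norm topology. Composing with the continuous (dense) embedding $E_1\hookrightarrow E_0$ yields a canonical bounded linear map $\ml(E_0,E_1)\to\ml(E_0)$, and hence
$$
\Pi\in C\bigl(J_\Delta^*,\ml(E_0)\bigr)
$$
with respect to the operator norm topology.

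The third and final step is to restrict this continuous map to the compact set $S_\kappa$. Since continuous maps on compact metric spaces are uniformly continuous, for every $\ve>0$ there exists $\eta=\eta(\ve,\kappa)>0$ with
$$
\|\Pi(a_1,\sigma_1)-\Pi(a_2,\sigma_2)\|_{\ml(E_0)}\le\ve
$$
whenever $(a_1,\sigma_1),(a_2,\sigma_2)\in S_\kappa$ satisfy $|(a_1,\sigma_1)-(a_2,\sigma_2)|\le\eta$, which is precisely the claim. No real obstacle is anticipated: the only subtle point is the distinction between the strong operator topology on $\ml(E_0)$ (in which $\Pi$ is continuous on the full triangle $J_\Delta$) and the operator norm topology (in which $\Pi$ is continuous only on the open part $J_\Delta^*$); the assumption $\kappa>0$ is exactly what allows us to stay inside $J_\Delta^*$ and invoke the stronger continuity.
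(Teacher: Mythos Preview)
Your argument is correct and coincides with the paper's own proof, which simply observes that $\Pi\in C(J_\Delta^*,\ml(E_0))$ is uniformly continuous on the compact subset $S_\kappa\subset J_\Delta^*$. You have merely spelled out the two ingredients---compactness of $S_\kappa$ and norm-continuity of $\Pi$ on $J_\Delta^*$ via \eqref{II.Equation(2.1.2)}---that the paper leaves implicit.
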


\begin{proof}
This follows from the fact that $\Pi\in C\big(\Delta_J^*,\ml(E_0)\big)$ is uniformly continuous on the compact subset
$S_\kappa$
of $\Delta_J^*$.
\end{proof}

\subsection*{Positivity} If $E_0$ is an ordered Banach space and
$A(a)$ is resolvent positive\footnote{An operator $A\in \mathcal{A}(E_0)$ is {\it resolvent positive}, if there is $\lambda_0\ge 0$ such that $(\lambda_0,\infty)\subset \rho(A)$ and $$(\lambda-A)^{-1}\in \ml_+(E_0)\,,\quad \lambda>\lambda_0\,.$$} for each $a\in J$,
then \cite[II.~Theorem~6.4.1,~Theorem~6.4.2]{LQPP} imply that the evolution operator $\Pi$ is positive, that is, $\Pi(a,\sigma)\in \ml_+(E_0)$ for each $(a,\sigma)\in J_\Delta$.

\end{appendix}

\section*{Declarations}

\subsection*{Ethical Approval}
 The submitted work is original and has not been published elsewhere in any form or language (partially or in full).

\subsection*{Competing interests} Not applicable.

\subsection*{Funding} Not applicable.

\subsection*{Availability of data and materials} Not applicable.

\bibliographystyle{siam}
\bibliography{AgeDiff_230303}

\end{document}